\newtheorem{theorem}{Theorem}
\newtheorem{lemma}[theorem]{Lemma}
\newtheorem{proposition}[theorem]{Proposition}
\newtheorem{corollary}[theorem]{Corollary}
\theoremstyle{definition}
\newtheorem{definition}[theorem]{Definition}
\newtheorem{notation}[theorem]{Notation}
\theoremstyle{remark}
\newtheorem{remark}[theorem]{Remark}
\newcommand{\alg}{\operatorname{alg}}
\newcommand{\centre}{\mathaccent"7017}
\newcommand{\cU}{\mathcal{U}}
\newcommand{\cV}{\mathcal{V}}
\newcommand{\cB}{\mathcal{B}}
\newcommand{\cE}{\mathcal{E}}
\newcommand{\cov}{\operatorname{cov}}
\newcommand{\rk}{k}
\newcommand{\rC}{\mathrm{C}\kern1pt}
\newcommand{\rO}{\operatorname{O}}
\newcommand{\lo}{\operatorname{o}}
\newcommand{\tr}{\operatorname{tr}}
\newcommand{\E}{\operatorname{E}}
\newcommand{\bC}{\mathbb{C}}
\newcommand{\bZ}{\mathbb{Z}}
\newcommand{\peq}{{p \mskip 2mu\cdot_\epsilon q}}
\newcommand{\pep}{{p \mskip 2mu\cdot_\epsilon p}}
\let\phi=\varphi
\newcommand{\spoke}[1]{\mathit{Sp}^+(#1)}
\newcommand{\rspoke}[1]{\mathit{Sp}^-(#1)}
\newcommand{\Tr}{\operatorname{Tr}}
\newcommand{\Wg}{\operatorname{Wg}}
\def\ff{\varphi}
\newcommand{\cA}{\mathcal{A}}
\newcommand{\ds}{\displaystyle}
\newcommand{\cP}{\mathcal{P}}
\newcommand{\sgn}{\textrm{sgn}}
\theoremstyle{definition}
\newcommand{\thebottomline}{%
\renewcommand{\thefootnote}{}
\renewcommand{\footnoterule}{}
\phantom{M}\footnote{\tiny \noindent\today}}
\newcommand{\ab}{\allowbreak}
\newcommand{\myleqno}{\let\veqno\@@leqno}
\newcommand{\myreqno}{\let\veqno\@@reqno}
\title[Real Second Order Freeness]{Real Second Order Freeness\\ 
       and Haar Orthogonal Matrices}
\author[mingo]{James A. Mingo$^{(*)}$}
\address{Department of Mathematics and Statistics, Queen's
  University, Jeffery Hall, Kingston, Ontario, K7L 3N6,
  Canada} 
\email{mingo@mast.queensu.ca}
\thanks{$^*$ Research supported by a Discovery Grant from
  the Natural Sciences and Engineering Research Council of
  Canada}
\author[popa]{Mihai Popa$^{(*)(\ddagger)}$}
\address{Department of Mathematics and Statistics, Queen's
  University, Jeffery Hall, Kingston, Ontario, K7L 3N6,
  Canada, and \newline
${}\hspace{.4cm}{}$ Institute of Mathematics ``Simion Stoilow'' of the Romanian Academy, P.O. Box 1-764, Bucharest, RO-70700, Romania }
\thanks{$(\ddagger)$ Research supported by the Natural Science Foundation of China Grant No. 11150110456, and the Romanian National Authority for Scientific Research, CNCS  UEFISCDI, Project Number PN-II-ID-PCE-2011-3-0119}
\email{popa@mast.queensu.ca}
\begin{document}

\begin{abstract} 
We demonstrate the asymptotic real second order freeness of
Haar distributed orthogonal matrices and an independent
ensemble of random matrices.  Our main result states that if
we have two independent ensembles of random matrices with a
real second order limit distribution and one of them is
invariant under conjugation by an orthogonal matrix, then
the two ensembles are asymptotically real second order
free. This captures the known examples of asymptotic real
second order freeness introduced by Redelmeier \cite{r1,r2}.
\end{abstract}

\maketitle


\section{Introduction}

The large $N$ behaviour of random matrices has been actively
studied since Wigner's celebrated semi-circle law was found
in 1955, \cite{w}. Subsequently in 1967 Marchenko and Pastur
found the limit distribution for Wishart matrices
\cite{mp1}, now called the Marchenko-Pastur distribution.
The essential point of these discoveries is that for many
ensembles of random matrices the description of the
distribution of the eigenvalues gets much simpler in the
large $N$ limit. Much subsequent work has been devoted to
expanding and refining this work, see for example the recent
book of Anderson, Guionnet, and Zeitouni \cite{agz}.

Another direction of research in random matrices deals with
the interaction of independent ensembles of random
matrices. In this direction one studies the limit eigenvalue
distribution of sums and products of ensembles whose limit
distributions are already known. The direction was
discovered by Voiculescu in his work on free probability. In
\cite{v1} and later in \cite{v2}, Voiculescu showed that
independent ensembles were asymptotically free if at least
one was unitarily invariant.  Recall that if two random
variables are freely independent then there is a universal
rule for finding the mixed moments from the moments of the
individual random variables. One does this either
analytically by using the $R$ and $S$ transform, see
\cite{vdn}, or combinatorially using free cumulants, see
\cite{ns}.

In the last two decades the fluctuations of the eigenvalues
have been studied both in the physics and the mathematics
literate, see e.g. \cite{az, bs, fmp, j, k, kkp}. In
\cite{mn} it was shown that the fluctuations of Wishart
matrices could be analyzed using the non-crossing diagrams
introduced in \cite{s}, but by using an annulus instead of a
disc or line, see Figure
\ref{figure:noncrossingpermutations}, hence all the
combinatorial techniques developed by Nica and Speicher
\cite{ns} could be brought to bear on the study of
fluctuations. Thus motivated, second order freeness was
introduced in \cite{ms, mss} and later higher order freeness
in \cite{cmss}.

The point of second and higher order freeness is that it
enables one to do for fluctuation moment and higher order
trace-moments what Voiculesu's first order freeness did for
moments. In particular if two random variables are second
order free and one knows the moments and the fluctuation
moments of each variable then there is a universal rule for
finding fluctuation moments of sums and products, see
\cite{mst}.

In \cite{cmss, mn, ms, mss} the random matrices considered
were either Hermitian or unitary. This left the question of
how to deal with real symmetric and orthogonal matrices. On
the first order level the techniques of Voiculescu were
equally applicable to real and complex ensembles. However it
was shown in \cite{r1,r2} that the universal rule found in
\cite{ms} needed to be modified for the real case; in
particular the transpose of the various operators made an
appearance. This led to a new kind of second order freeness,
called real second order freeness in \cite{r1,r2}.

The non-crossing diagrams introduced in \cite{mn} had to
augmented by diagrams in which the orientation of one of the
circles was reversed. The operators on the reversed side get
transposed. One can give a heuristic interpretation of this
using maps on surfaces, see \cite{lz}. In the complex case
we only work with orientable surfaces and in the real case
we also have to also deal with non-orientable surfaces. So
we imagine that our surfaces are marked our operators and
the graphs tell us how they get multiplied, see Figure
\ref{figure:paritypreserved}. Wherever we put an operator on
the front side of the surface, we put its transpose on the
back. The non-orientability of the surface means that we can
cross from font to back and see the transposed operators,
something that we cannot do in the complex case.

The main result of this paper, Theorem
\ref{theorem:mastertheorem}, asserts that if $\{A_i\}_i$ and
$\{B_j\}_j$ are independent ensembles of random matrices and
if at least one of them is invariant under conjugation by an
orthogonal matrix then the ensembles are asymptotically real
second order free. The proof of this theorem occupies nearly
the whole paper. This theorem is the orthogonal version of a
theorem in \cite{mss}, where we assumed that one of the
ensembles is invariant under conjugation by an unitary
matrix. While the statements of the two theorems are similar
the proofs follow quite different paths. In \cite{mss} the
asymptotics of the cumulants of the unitary Weingarten
function, from \cite{c}, were heavily used. In this paper we
only need the multiplicitivity of the leading order of the
orthogonal Weingarten function, see \cite{cs}. We work with
centred elements and this obviates the need to work with the
cumulants of the Weingarten function.

\subsubsection*{Illustrative examples}

Let us conclude this introduction with some
examples. Suppose that $A_1, A_2, A_3, A_4$ are $d \times d$
deterministic matrices and $O$ is a $d \times d$ Haar
distributed random orthogonal matrix and $U$ be a $d \times
d$ Haar distributed random unitary matrix.  From
\cite[Prop. 3.4]{mss} we have
\[
\E(\Tr(UA_1U^{-1}A_2)) =
d^{-1}\Tr(A_1) \Tr(A_2)\ \mbox{and}
\]
\[
\E(\Tr(UA_1UA_2)) = 0.
\]
According to Proposition \ref{proposition:exact_formula}
\[
\E(\Tr(OA_1O^{-1}A_2))
= d^{-1}\Tr(A_1)\Tr(A_2) \ \mbox{and}
\]
\[
\E(\Tr(OA_1OA_2)) =
d^{-1} \Tr(A_1A_2^t).
\]
So we already see a bit a difference between the orthogonal
and unitary cases; namely the appearance of transposes in
lower order terms. When we consider covariances we see more
differences. First in the unitary case we have
\begin{multline*}
\cov(\Tr(UA_1U^{-1}A_2), \Tr(UA_3U^{-1}A_4)) \\
=
\frac{d^{-4}}{1-d^{-2}} \Tr(A_1)\Tr(A_2)\Tr(A_3)\Tr(A_4) \\
+
\frac{d^{-2}}{1-d^{-2}} \Tr(A_1A_3) \Tr(A_2A_4) \\
-
\frac{d^{-3}}{1-d^{-2}} \left\{
\Tr(A_1A_3)\Tr(A_2)\Tr(A_4) + \Tr(A_1)\Tr(A_2A_4) \Tr(A_3)
\right\}.
\end{multline*}

Now in the orthogonal case we have
\begin{multline*}
(1 + d^{-1}\!\!-2d^{-2})\cov(\Tr(OA_1O^{-1}A_2), \Tr(OA_3O^{-1}A_4))\\
=
d^{-4} \{\Tr(A_1) \Tr(A_2) \Tr(A_3) \Tr(A_4)\\
+
\Tr(A_1) \Tr(A_2) \Tr(A_3^t) \Tr(A_4^t)\}\\
- d^{-3}\{
\Tr(A_1A_3)\Tr(A_2) \Tr(A_4) + \Tr(A_1A_3^t)\Tr(A_2) \Tr(A_4^t)\\
+ 
\Tr(A_1)\Tr(A_2A_4)\Tr(A_3) + \Tr(A_1)\Tr(A_2A_4^t)\Tr(A_3^t)\}\\
+ (d^{-2}+ d^{-3})\{
\Tr(A_1A_3)\Tr(A_2A_4) + \Tr(A_1A_3^t)\Tr(A_2A_4^t)\} \\
- d^{-3}\{ \Tr(A_1A_3^t)\Tr(A_2A_4)  + \Tr(A_1A_3)\Tr(A_2A_4^t)\}.
\end{multline*}

Note the similarity to the unitary case except that each
term of leading order appears twice--once with no transposes
and once with transposes on $A_3$ and $A_4$. Moreover when
the $A_i$'s are centred, i.e. $\Tr(A_i) = 0$, the only
remaining terms are $\Tr(A_1A_3)\Tr(A_2A_4)$ and
$\Tr(A_1A_3^t)\Tr(A_2A_4^t)$. These terms correspond to
spoke diagrams which are discussed in the next section, see
Figure \ref{figure:bothspokediagrams}. By working with
centred elements the number of terms is significantly
reduced, it is in this way that we can skip the calculations
requiring the cumulants of the Weingarten function.

\subsubsection*{The Organization of the Paper} 
In section \ref{section:noncrossingdiagrams} we review the
definitions of non-crossing partitions. In section
\ref{section:traceofproduct} we use the Weingarten function
of \cite{cs} to compute the trace of a product of orthogonal
matrices and independent random matrices. This is how the
calculations in the examples above were done. In section
\ref{section:spokediagrams} we prove two important lemmas on
a special kind of non-crossing partition called a spoke
diagram. These are the only diagrams that survive in the
large $d$ limit. In section
\ref{section:realsecondorderfreeness} we recall the notions
of second order freeness from \cite{r1,r2} and prove that
real second order freeness satisfies an associative law. In
section \ref{section:firstorderfreenes} we prove that Haar
distributed orthogonal matrices and an independent ensemble
are first order free. That this could be done was already
suggested by Voiculescu in \cite{v1} some twenty years ago
and was later proved in \cite[Thm.~5.2]{cs}. In section
\ref{section:fluctuationmoments} we show that the
fluctuation moments of Haar distributed orthogonal matrices
and an independent ensemble of random matrices satisfy the
universal rule required for second order freeness. In
section \ref{section:vanishing} we show that the third and
higher cumulants of traces of products of Haar distributed
orthogonal matrices and an independent ensemble of random
matrices satisfy the final condition for asymptotic real
second order freeness. This completes the proof of their
asymptotic real second order freeness. In section
\ref{section:mainresults} we use this result to obtain all
our other results on asymptotic real second order freeness.
In section \ref{section:finalremarks} we present some
concluding remarks and indications of future work.

\section{Non-crossing diagrams and pairings}
\label{section:noncrossingdiagrams}

Central to the combinatorial approach to freeness is the
idea of a non-crossing partition. A partition of $[n]$ is
non-crossing is one in which the blocks can be drawn in a
non-crossing way; see the left half of Figure
\ref{figure:noncrossingpermutations}. For second order
freeness we need non-crossing annular partitions. This means
we can draw the blocks on an annulus in a non-crossing way;
see the right half of Figure
\ref{figure:noncrossingpermutations}. In the case of second
order freeness additional information about the partitions
is needed, namely the order in which they visit the
points. For this reason we regard our partitions as
permutations by interpreting the blocks of the partition as
cycles in the cycle decomposition of the corresponding
permutation.

\begin{notation}
For any integer $n \geq 1$, let $[n] = \{1, 2, 3, \dots,
n\}$.  Let $\cP(n)$ be the set of all partitions of
$[n]$. For any partition $\pi$ of $[n]$ let $\#(\pi)$ denote
the number of blocks of $\pi$, and $|\pi| = n -
\#(\pi)$. The set $\cP(n)$ is a partially ordered set in
which $\pi \leq \sigma$ means every block of $\pi$ is
contained in some block of $\sigma$. With this order
$\cP(n)$ is partially ordered set and is in fact a
lattice. We denoted the join of two partitions $\pi$ and
$\sigma$ by $\pi \vee \sigma$.
\end{notation}

Given a permutation it can be difficult to decide if there
is a non-crossing way of drawing its cycles, however there
is an algebraic way to see if such a diagram exists. Let
$\gamma = (1, \dots, m)(m+1, \dots, m+n)$ and let $\pi$ be a
permutation of $[m+n]$ and denote by $\langle \pi, \gamma
\rangle$ the subgroup of $S_n$ generated by $\pi$ and
$\gamma$. If the subgroup $\langle \pi, \gamma \rangle$ acts
transitively on $[m+n]$ then we have that $\pi$ is
non-crossing if and only if
\begin{equation}\label{equation:firstgeodesic}
\#(\pi) + \#(\pi^{-1}\gamma) = m + n.
\end{equation}
Note that the condition that $\langle \pi, \gamma \rangle$
act transitively is the same as requiring that there is at
least one cycle of $\pi$ that contains points in both cycles
of $\gamma$. When this happen we shall say that $\pi$
\textit{connects} the cycles of $\gamma$

\begin{figure}
\hfill\includegraphics{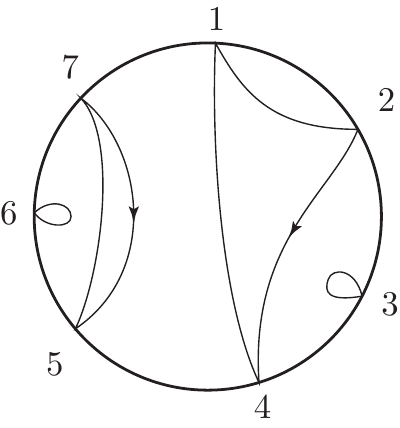}\hfill\includegraphics{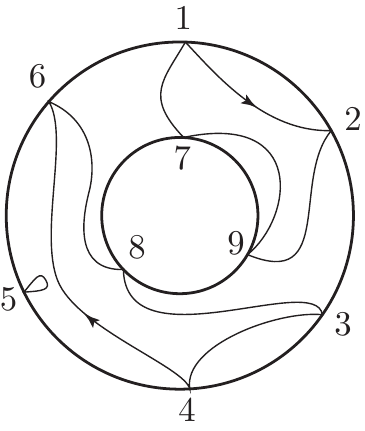}\hfill\hbox{}
\caption{\label{figure:noncrossingpermutations} On the left
  we have the non-crossing disc permutation $(1, 2, 4)(3)(5,
  7)(6)$. On the right we have the non-crossing annular
  permutation $(1, 2, 9, 7)\ab(3, 4, 6, 8)\ab(5)$.}
\end{figure}

We can extend this to the case of $\gamma$ having any number
of cycles. Let $\pi$ and $\gamma$ be permutations of
$[n]$. Let $k$ be the number of orbits of $\langle \pi,
\gamma \rangle$. Then
\begin{equation}\label{equation:genusexpansion}
\#(\pi) + \#(\pi^{-1}\gamma) + \#(\gamma) \leq n + 2k
\end{equation}
with equality only if $\pi$ is non-crossing with respect to
$\gamma$, see e.g. \cite[Remark 2.11]{mn}.

In the case of real second order freeness we require an
additional set of non-crossing diagrams, we call these
\textit{reversed non-crossing annular permutations}. If we
let $\gamma' = (1,\dots,m)(m+n, m+n-1, \dots, m+2, m+1)$
then we say that a permutation $\pi \in S_{m+n}$ is a
reversed non-crossing permutation of a $(m,n)$-annulus if
\[
\#(\pi) + \#(\pi^{-1}\gamma') = m + n.
\]
Notice that this is the same condition as in Equation
(\ref{equation:firstgeodesic}) but $\gamma$ is replaced with
$\gamma'$. Graphically, this corresponds to using the same
orientation for labelling the points on each circle; see the
right hand side of Figure \ref{figure:bothspokediagrams}.

A special kind of a non-crossing annular permutation that we
shall make use of is that of a \textit{spoke diagram}, see
Figure \ref{figure:bothspokediagrams}.  Recall that a
\textit{pairing} of $[n]$ is a partition in which each block
has two elements. We usually regard a pairing as a
permutation, by considering each block to be a cycle with
two elements.  By a \textit{standard spoke diagram} we mean
a non-crossing pairing of an $(m, n)$-annulus in which all
pairs connect the two circles. Note that means that $m = n$
and there is $l$ such that $m+1 \leq l \leq 2m$ such that
every cycle of $p$ is of the form $(k, \gamma^{-k}(l))$ for
$1 \leq k \leq m$.

By a \textit{reversed spoke diagram} we mean a reversed
non-crossing annular pairing in which all blocks connect the
two circles; see Figure \ref{figure:bothspokediagrams}. By a
\textit{spoke diagram} we mean either a standard or reversed
spoke diagram. See Figure
\ref{figure:bothspokediagrams}. Note that means that $m = n$
and there is $l$ such that $m+1 \leq l \leq 2m$ such that
every cycle of $p$ is of the form $(k, \gamma^{k}(l))$ for
$1 \leq k \leq m$.

\begin{figure}[t]
\hfill\includegraphics{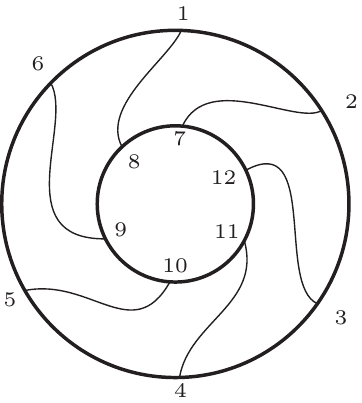}\hfill\includegraphics{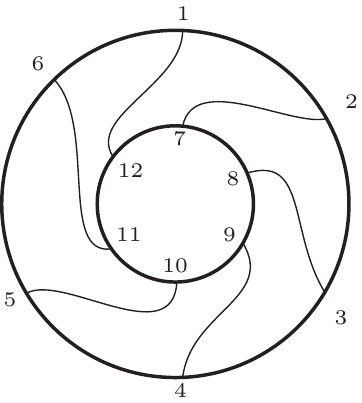}\hfill\hbox{}
\caption{\label{figure:bothspokediagrams} On the left we
  have a non-crossing pairing of a $(6, 6)$-annulus in which
  all blocks connect the two circles, i.e. a standard spoke
  diagram. Note that the two circles have opposite
  orientations. In the figure on the right we have a
  reversed non-crossing pairing of a $(6,6)$-annulus. i.e. a
  reversed spoke diagram. Note that the two circles having
  the same orientation.  }
\end{figure}

We denote by $\cP_2(n)$ the pairings of $[n]$. If $p$ is a
pairing of $[n]$ and $(r, s)$ is a cycle of $p$ we shall
denote this by $(r, s) \in p$. We denote by
$\spoke{m}$ the set of all standard spoke
diagrams and by $\rspoke{m}$ the set of all
reversed spoke diagrams.

Given a permutation $\pi \in S_n$, we shall frequently
consider the cycles of $\pi$ as a partition of $[n]$. This
map $S_n \longrightarrow \cP(n)$ forgets the order of
elements in a cycle and so is not a bijection. Conversely
given a partition $\pi \in \cP(n)$ we put the elements of
each block into increasing order and consider this a
permutation.  Restricted to pairings this is a bijection.


\section{The Trace of a Product}
\label{section:traceofproduct}

Given a permutation $\sigma \in S_n$ and $d \times d$
matrices $A_1, \dots, A_n$ we let $a^{(i)}_{p,q}$ be the
$(p,q)$-entry of $A_i$ and
\begin{equation}\label{equation:permutation}
\Tr_\sigma(A_1, \dots, A_n) = \sum_{i_1, \dots, i_n = 1}^d
a^{(1)}_{i_1 i_{\sigma(1)}} \cdots
a^{(n)}_{i_ni_{\sigma(n)}}.
\end{equation}
This expression can also be written as a product of traces
as follows. Write $\sigma = c_1 \cdots c_k$ in cycle
form. If $c = (i_1, \dots, i_r)$ is a cycle of $\sigma$ we
let $\Tr_c(A_1, \dots, A_n) = \Tr(A_{i_1} \cdots
A_{i_r})$. Then
\[
\Tr_\sigma(A_1, \dots, A_n) = \prod_{i=1}^k \Tr_{c_i}(A_1,
\dots, A_n).
\]

Let $O = (o_{ij})$ be a $d \times d$ Haar distributed random
orthogonal matrix and $\{Y_1, \dots , Y_n\}$ be $d \times d$
random matrices whose entries have moments of all
orders. Let $\gamma \in S_n$ be a permutation, and let
$\epsilon_1, \epsilon_2, \dots,\ab \epsilon_n \in \{-1,
1\}$. In this section we wish to find a simple expression
for
\[
\E(\Tr_\gamma(O^{\epsilon_1}Y_1, \dots, O^{\epsilon_n}Y_n)).
\]

We shall use the Weingarten function introduced by Collins
and \'Sniady \cite{cs}. The Weingarten expresses the
expectation $\E(o_{i_1i_{-1}} \cdots\ab o_{i_ni_{-n}})$ as a
sum over pairings of $[n]$.  The first question we need to
address is, for two pairings $p$ and $q$, the relationship
between the cycles of $pq$ and the blocks of $p \vee q$. See
Figure \ref{figure:pairingloops}. This is a standard fact;
for the reader's convenience and to establish our notation
we give a proof.

\begin{lemma}\label{lemma:pairingproduct}
Let $p, q \in \cP_2(n)$ be pairings and $(i_1, i_2, \dots,
i_k)$ a cycle of $pq$. Let $j_r = q(i_r)$. Then $(j_k,
j_{k-1}, \dots , j_1)$ is also a cycle of $pq$, and these
two cycles are distinct; $\{i_, \dots, i_k, j_1,\dots,
j_k\}$ is a block of $p \vee q$ and all are of this form;
$2\#(p \vee q) = \#(pq)$.
\end{lemma}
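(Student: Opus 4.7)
The plan is to exploit the fact that pairings are fixed-point-free involutions, so $p = p^{-1}$ and $q = q^{-1}$. First I would establish the companion cycle: from $pq(i_{r-1}) = i_r$, applying $p$ on the left gives $j_{r-1} = q(i_{r-1}) = p(i_r)$, so
\[
pq(j_r) = p\bigl(q(q(i_r))\bigr) = p(i_r) = j_{r-1}.
\]
This exhibits $(j_1, j_k, j_{k-1}, \dots, j_2)$, equivalently $(j_k, j_{k-1}, \dots, j_1)$, as a cycle of $pq$. For the corresponding block of $p \vee q$, the identities $q(i_r) = j_r$, $p(i_r) = j_{r-1}$, $q(j_r) = i_r$, and $p(j_r) = pq(i_r) = i_{r+1}$ show that $\{i_1, \dots, i_k, j_1, \dots, j_k\}$ is closed under the action of $\langle p, q\rangle$; since $pq$ connects the $i_r$'s and $q$ links each $i_r$ to $j_r$, this set is a single $\langle p,q\rangle$-orbit, hence a block of $p \vee q$.

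The main obstacle is proving the two cycles are distinct and that every block of $p \vee q$ arises this way. My approach is to encode the data in one picture: form the multigraph $G$ on vertex set $[n]$ whose edges are the blocks of $p$, colored ``$p$'', together with the blocks of $q$, colored ``$q$'' (a pair lying in both $p$ and $q$ produces a double edge). Every vertex has exactly one $p$-edge and one $q$-edge, so $G$ is $2$-regular and decomposes into disjoint cycles along which $p$- and $q$-edges alternate; a double edge counts as a length-$2$ cycle. Each component is an $\langle p, q\rangle$-orbit and therefore a block of $p \vee q$. On a component with $2k$ vertices, one checks that $pq$ shifts each vertex by two steps along the alternating cycle, so since $\gcd(2, 2k) = 2$ it has exactly two orbits of length $k$, namely the two parity classes of the cycle. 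Consequently the two $pq$-cycles produced in the first step are automatically disjoint and together exhaust the block.

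Summing over blocks, each block of $p \vee q$ accounts for exactly two $pq$-cycles, giving $2\#(p \vee q) = \#(pq)$. The only real care I expect to need is in the alternating-cycle description, in particular verifying that $pq$ really does act as a shift by two along each alternating cycle and that the induced cyclic direction on the ``odd'' parity class agrees with the reversal $(j_k, j_{k-1}, \dots, j_1)$ in the statement; everything else is bookkeeping.
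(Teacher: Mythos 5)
Your proposal is correct, and the first half (the computation $pq(j_r)=j_{r-1}$ exhibiting the reversed companion cycle, and the closure of $\{i_1,\dots,i_k,j_1,\dots,j_k\}$ under $p$ and $q$) coincides with the paper's argument. Where you genuinely diverge is in proving that the two cycles are disjoint and that they exhaust the block: the paper argues algebraically that an intersection $i_r=j_s$ would force a word of the form $q(pq)^N$ to have a fixed point, and since such a word is an odd palindrome in the fixed-point-free involutions $p$ and $q$, a fixed point of the whole word would produce a fixed point of the middle letter, i.e.\ of $p$ or $q$ --- a contradiction. You instead pass to the multigraph whose edges are the $p$-pairs and $q$-pairs, use the standard decomposition of a union of two perfect matchings into alternating even cycles (with doubled edges as $2$-cycles), and observe that $pq$ acts on each component by moving two steps, so that its orbits on a component of size $2k$ are exactly the two parity classes, each of size $k$. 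Your route buys distinctness, exhaustion, and the count $2\#(p\vee q)=\#(pq)$ in one stroke and is more transparent; the paper's route is shorter to state but hides the palindrome observation in a one-line claim. One small point to be careful about in your picture: $pq$ shifts by $+2$ along one parity class and by $-2$ along the other (which is precisely why the second cycle appears reversed as $(j_k,\dots,j_1)$), so it is not a single rotation of the alternating cycle; this does not affect the conclusion that each parity class is a single $pq$-orbit, and your explicit computation $pq(j_r)=j_{r-1}$ already pins down the orientation, so the caveat you flag at the end is indeed just bookkeeping.
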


\begin{proof}
We have $pq(i_r)= i_{r+1}$, thus $j_r = q(i_r) =
p(i_{r+1})$. Hence $pq(j_{r+1}) = p(q(q(i_{r+1}))) =
p(i_{r+1}) = j_r$. If $\{i_1, \dots, i_k\}$ and $\{j_1,
\dots, j_k\}$ were to have a non-empty intersection then,
for some $n$, $q(pq)^n$ would have a fixed point, but this
would in turn imply that either $p$ or $q$ had a fixed
point, which is impossible. Since $\{q(i_r) \}_r = \{
j_s\}_s$ and $\{p(j_s)\}_s = \{i_r\}_r$, $\{i_, \dots, i_k,
j_1,\dots, j_k\}$ must be a block of $p \vee q$.  Since
every point of $[n]$ is in some cycle of $pq$, all blocks
must be of this form. Since every block of $p \vee q$ is the
union of two cycles of $pq$, we have $2\#(p \vee q) =
\#(pq)$.
\end{proof}

\begin{notation}\label{notation:permutationextension}
Let $[-n] = \{-n, -n+1, \dots, -2, -1\}$ and $[\pm n] = [-n]
\cup [n]$. Let $\delta$ be the permutation of $[\pm n]$
which sends $k$ to $-k$ for $k \in [\pm n]$. Since each
cycle of $\delta$ is of the form $(k, -k)$, we shall also
regard $\delta$ as a pairing of $[\pm n]$. If $\epsilon \in
\bZ_2^n = \{-1, 1\}^n$, let $\delta_\epsilon$ denote the
permutation of $[\pm n]$ given by $k \mapsto \epsilon_{|k|}
k$.

Given $\pi$ a permutation on $[n]$ we shall regard $\pi$
also a permutation of $[\pm n]$ where for $1 \leq k \leq n$,
we let $\pi(-k) = -k$. Let $\gamma$ be the permutation
of $[n]$ with the one cycle $(1,2,3, \dots, n)$, but
following the convention mentioned above we also have
$\gamma(-k) = -k$ for $1 \leq k \leq n$.

\end{notation}

\begin{figure}\label{figure:pairingloops}
\includegraphics{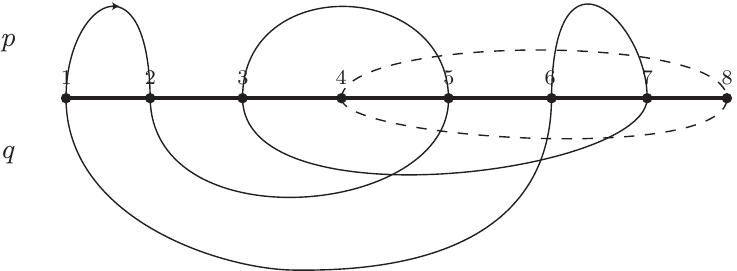}
\caption{In this example $n = 8$, $p =
  (1,2)\ab(3,5)\ab(4,8)\ab(6,7)$, and $q =
  (1,6)(2,5)(3,7)(4,8)$. Then $pq = (1,7,5)(2,3,6)(4)(8)$
  and $p \vee q = \{(1,2,3,5,6,7)(4,8)\}$.}
\end{figure}

\begin{lemma}\label{lemma:pairingloopsbis}
Let $p, q \in \cP_2(n)$ be pairings then $\#(pq) =
\#(p\delta q)$.

\end{lemma}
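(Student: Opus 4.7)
The plan is to unpack the action of $p\delta q$ and exhibit an explicit bijection between the cycles of $pq$ on $[n]$ and the cycles of $p\delta q$ on $[\pm n]$.

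First I would compute the action of $p\delta q$ explicitly. Since $p$ and $q$ are involutions of $[n]$ extended by fixing every element of $[-n]$, a direct evaluation gives, for $k \in [n]$,
\[
(p\delta q)(k) = p(\delta(q(k))) = p(-q(k)) = -q(k),
\]
because $-q(k) \in [-n]$ is fixed by $p$; and similarly
\[
(p\delta q)(-k) = p(\delta(-k)) = p(k).
\]
Consequently $p\delta q$ sends $[n]$ into $[-n]$ and $[-n]$ into $[n]$. Every cycle of $p\delta q$ therefore has even length and alternates between positive and negative entries.

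Next I would observe that squaring gives $(p\delta q)^2(k) = p\delta q(-q(k)) = p(q(k)) = pq(k)$ for $k \in [n]$. Thus the positive entries appearing in a single cycle of $p\delta q$ are exactly one orbit of $pq$ on $[n]$, and they appear in the same cyclic order as in the corresponding cycle of $pq$. This furnishes the desired bijection: given a cycle $(i_1, i_2, \dots, i_l)$ of $pq$, the lift
\[
(i_1,\, -q(i_1),\, i_2,\, -q(i_2),\, \dots,\, i_l,\, -q(i_l))
\]
is a single cycle of $p\delta q$ of length $2l$, and every cycle of $p\delta q$ arises this way since its positive entries constitute a $pq$-orbit. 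Counting both sides yields $\#(pq) = \#(p\delta q)$.

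The proof is essentially a calculation with no real obstacle; the only care needed is making sure the bijection is well defined, i.e. that two distinct cycles of $pq$ (in particular, the two paired cycles from Lemma \ref{lemma:pairingproduct}) do produce two distinct cycles of $p\delta q$. This is immediate because the positive part of a cycle of $p\delta q$ determines it, so distinct $pq$-orbits give distinct lifts.
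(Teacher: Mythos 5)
Your proof is correct and follows essentially the same route as the paper's: both use that $p\delta q$ alternates signs, that its square acts as $pq$ on $[n]$, and the explicit lift $(i_1, -q(i_1), \dots, i_l, -q(i_l))$ to set up the bijection between cycles. The extra care you take with the explicit action of $p\delta q$ and with injectivity of the lift is a welcome (if minor) tightening of the paper's argument.
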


\begin{proof}
Note that for $1 \leq k \leq n$ we have $p\delta q(k) < 0$
and $p\delta q(-k) > 0$. Thus the elements in an orbit of
$p\delta q$ always alternate in sign. Moreover $(p \delta
q)^2 = pq$. Hence the positive elements of a cycle of $p
\delta q$ form a cycle of $pq$. Conversely let $(i_1, i_2,
\dots, i_r)$ be a cycle of $pq$. Then $(i_1, -q(i_1), i_2,
-q(i_2), \dots , i_r, -q(i_r))$ is a cycle of $p \delta
q$. This establishes a bijection between the cycles of $p
\delta q$ and the cycles of $pq$.
\end{proof}

The pairings of $[\pm n]$ shall be denoted $\cP_2(\pm
n)$. For a pairing $p \in \cP_2(\pm n)$, and a $2n$-tuple
$i=(i_1, i_{-1}, \dots, i_n, i_{-n})$ we write $i = i \circ
p$ to mean that whenever $p(r) = s$ we have $i_r = i_s$. For
a $d \times d$ matrix $A$ let $A^{(-1)} = A^t$, the
transpose of $A$, and $A^{(1)} = A$. For $\eta = (\eta_1,
\eta_2, \dots, \eta_n) \in \bZ_2^n$ and $\pi \in S_n$, let
$\Tr_{(\pi, \eta)}(A_1, \dots, A_n) =
\Tr_\pi(A^{(\eta_1)}_1, \dots, A^{(\eta_n)}_n)$

\begin{lemma}\label{lemma:pairingtopermutation}
Let $p \in \cP_2(\pm n)$. The there is $\pi \in S_n$ and
$\eta \in \bZ_2^n$ such that
\[
\mathop{\sum_{i_1, i_{-1}, \dots, i_{n}, i_{-n}=1}}_{i = i
  \circ p}^d a^{(1)}_{i_1i_{-1}} a^{(2)}_{i_2i_{-2}} \cdots
a^{(k)}_{i_{k}i_{-k}} \cdots a^{(n)}_{i_{n}i_{-n}} =
\Tr_\pi(A^{(\eta_1)}_1, \dots, A^{(\eta_n)}_n)
\]
\end{lemma}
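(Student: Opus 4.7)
The plan is to read off $(\pi, \eta)$ from $p$ via a graph-theoretic construction. I would think of each matrix $A_k$ as a vertex carrying two ``legs'' at positions $k$ (its row index) and $-k$ (its column index); the pairing $p$ joins these $2n$ legs pairwise. Encode this as a multigraph $G_p$ on vertex set $[n]$, drawing for each pair $\{r,s\} \in p$ an edge between $|r|$ and $|s|$ (with loops allowed when $|r|=|s|$, which covers the case $p(k) = -k$). Since every element of $[\pm n]$ is paired exactly once, each vertex of $G_p$ has degree two, so $G_p$ decomposes as a disjoint union of cycles.

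Next I would orient each cycle of $G_p$ arbitrarily. This designates, at every vertex $k$, one leg as incoming, call it $\alpha^-(k) \in \{k,-k\}$, and the other as outgoing, $\alpha^+(k)$. Set $\eta_k = +1$ precisely when $\alpha^-(k) = k$ and $\eta_k = -1$ otherwise, so that $A_k^{(\eta_k)}$ always has $\alpha^-(k)$ as its row index and $\alpha^+(k)$ as its column. Define $\pi \in S_n$ by letting $\pi(k)$ be the vertex following $k$ in its oriented cycle; then the cycles of $\pi$ are exactly the oriented cycles of $G_p$, and by construction every pair of $p$ has the form $\{\alpha^+(k),\, \alpha^-(\pi(k))\}$ for some $k \in [n]$.

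To finish the proof I would substitute $j_k := i_{\alpha^-(k)}$ on the left-hand side. The constraint $i = i \circ p$ then automatically forces $i_{\alpha^+(k)} = i_{\alpha^-(\pi(k))} = j_{\pi(k)}$, and the $n$ free variables $(j_1,\dots,j_n) \in [d]^n$ parametrize constrained $i$-tuples bijectively. A short case check on the two values of $\eta_k$ then shows $a^{(k)}_{i_k, i_{-k}} = [A_k^{(\eta_k)}]_{j_k, j_{\pi(k)}}$ in both cases (when $\eta_k=+1$ both sides are $a^{(k)}_{j_k, j_{\pi(k)}}$; when $\eta_k=-1$ both sides are $a^{(k)}_{j_{\pi(k)}, j_k}$), so the sum on the left equals $\Tr_\pi(A_1^{(\eta_1)}, \dots, A_n^{(\eta_n)})$.

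The main obstacle is notational rather than conceptual: one must set up the legs picture so that independent orientation choices on disjoint cycles of $G_p$ assemble into a globally consistent $(\pi,\eta)$, and verify that the substitution $j_k := i_{\alpha^-(k)}$ really is bijective onto $[d]^n$. Both points come down to the $2$-regularity of $G_p$ together with the observation that each element of $[\pm n]$ is uniquely either $\alpha^-(k)$ or $\alpha^+(k)$ for its absolute value. Incidentally $(\pi,\eta)$ is not unique---reversing a cycle's orientation flips all the $\eta_k$ on its vertices and reverses that cycle of $\pi$---but this only reflects the harmless identity $\Tr(B_1 \cdots B_r) = \Tr(B_r^t \cdots B_1^t)$.
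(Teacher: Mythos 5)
Your proposal is correct and is essentially the paper's own argument in different clothing: your $2$-regular multigraph $G_p$ with a chosen orientation of each cycle is exactly the paper's decomposition of $p\delta$ into cycle pairs $\{c,\, \delta c^{-1}\delta\}$ with a chosen representative, your $\eta_k$ records the same signs, and your substitution $j_k = i_{\alpha^-(k)}$ is the paper's identity $i_{-l_k} = i_{l_{k+1}}$ coming from $i = i\circ p$. Even your closing remark on the non-uniqueness of $(\pi,\eta)$ matches the remark the paper makes immediately after the lemma.
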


\begin{proof}
We saw that the cycle decomposition of $p\delta$ may be
written $c_1 {c_1}' \cdots c_s {c_s}'$ where ${c_i}' =
\delta c_i^{-1} \delta$. It is arbitrary which of the pair
$\{ c_i, {c_i}'\}$ is called $c_i$ and which ${c_i}'$.

For each $i$, choose a representative of each pair $\{ c_i,
{c_i}'\}$, say $c_1, c_2, \dots,\ab c_s$. For each $i$ we
construct a cycle $\tilde c_i$ as follows. Suppose $c_i =
(l_1, \dots, l_r)$. Let $\tilde c_i = (j_1, j_2, \dots,
j_r)$ where
\[
j_k = 
\begin{cases}
-l_k & l_k < 0 \\
l_k & l_k > 0 
\end{cases}
\mbox{\ and\ } 
\eta_{j_k} =
\begin{cases}
-1 & l_k < 0 \\
1 & l_k >0
\end{cases}.
\]
Note that $j_k = \eta_{j_k} l_k = |l_k|$. Then we let $\pi =
\tilde c_1 \cdots \tilde c_s$ and $\eta = (\eta_1, \dots,
\eta_n)$.

We denote the $(m,n)$ entry of $A_i^{(\eta_i)}$ by
$a^{(i,\eta_i)}_{m,n}$. Let $(l_1, \dots, l_r)$ be a cycle
of $p\delta$. Let $(j_1, \dots, j_1)$ and $(\eta_1, \dots,
\eta_n)$ be as above i.e. $j_k = |l_k|$ and $\eta_{j_k} =
l_k/|l_k|$. Then
\[
a^{(j_k)}_{i_{j_k}\, i_{-j_k}} = \left.\begin{cases} \vrule
  width 0pt depth 15pt a^{(j_k, \eta_{j_k})}_{i_{j_k}\,
    i_{-j_k}} & \mbox{ if\ } \eta_{j_k} = 1 \\ a^{(j_k,
    \eta_{j_k})}_{i_{-j_k}\, i_{j_k}} & \mbox{ if\ }
  \eta_{j_k} = -1 \\
\end{cases}\right\}
= a^{(j_k, \eta_{j_k})}_{i_{l_k}\, i_{-l_k}}.
\]
Thus
\[
a^{(j_1)}_{i_{j_1}\, i_{-j_1}} \cdots
a^{(j_r)}_{i_{j_r}\, i_{-j_r}}
= a^{(j_1, \eta_{j_1})}_{i_{l_1}\, i_{-l_1}} \cdots 
a^{(j_r, \eta_{j_r})}_{i_{l_r}\, i_{-i_r}}
\]
Note that $i_{-l_k}= i(\delta(l_k)) = i(p\delta(l_k)) =
i(l_{k+1}) = i_{l_{k+1}}$, as $i = i \circ p$. Thus
\begin{eqnarray*}
&&
\mathop{\sum_{i_1, \dots, i_{-n}=1}}_{i = i \circ p}^d
a^{(1)}_{i_1i_{-1}} a^{(2)}_{i_2i_{-2}} \cdots a^{(k)}_{i_{k}i_{-k}}
\cdots a^{(n)}_{i_{n}i_{-n}}  \\
&=& 
\mathop{\sum_{i_1, \dots, i_{-n}=1}}_{i = i \circ p}^d
\mathop{\prod_{\tilde c \in \pi}}_{ \tilde c = (j_1,\dots, j_r)}
a^{(j_1)}_{i_{j_1}\, i_{-j_1}} \cdots
a^{(j_r)}_{i_{j_r}\, i_{-j_r}} \\
&=&
\mathop{\sum_{i_1, \dots, i_{2n}=1}}_{i = i \circ p}^d
\mathop{\prod_{\tilde c \in \pi}}_{ \tilde c = (j_1,\dots, j_r)}
a^{(j_1, \eta_{j_1})}_{i_{l_1}\, i_{-l_1}} \cdots 
a^{(j_r, \eta_{j_r})}_{i_{l_r}\, i_{-i_r}} \\
& = &
\mathop{\prod_{\tilde c \in \pi}}_{ \tilde c = (j_1,\dots, j_r)}
\Tr(A_{j_1}^{(\eta_{j_1})} \cdots A_{j_r}^{(\eta_{j_r})}) \\
& = &
\Tr_\pi(A_1^{(\eta_1)}, \dots, A_n^{(\eta_n)})
\end{eqnarray*}
\end{proof}

\begin{remark}
The pair $(\pi, \eta)$ constructed in Lemma
\ref{lemma:pairingtopermutation} is not unique; however
since
\[
\Tr(A_{j_1}^{(\eta_{j_1})} \cdots A_{j_r}^{(\eta_{j_r})})
=
\Tr(A_{j_r}^{(-\eta_{j_r})} \cdots A_{j_1}^{(-\eta_{j_1})})
\]
the value of $\Tr_\pi(A_1^{(\eta_1)}, \dots,
A_n^{(\eta_n)})$ is independent of the choices made.
\end{remark}

\begin{notation}
Let $\bC[\cP_2(n)]$ be the inner product vector space with
orthonormal basis $\cP_2(n)$. For an integer $d \geq n$,
define $\phi : \bC[\cP_2(n)] \longrightarrow \bC[\cP_2(n)]$
by
\[
\langle \phi(p), q \rangle = d^{\#(p \vee q)}
\]
\end{notation}

In \cite[\S 3]{cs}, Collins and \'Sniady showed that $\phi$
is an invertible linear transformation and denoted its
inverse $\Wg$, the \textit{orthogonal Weingarten
  function}. From the construction, $\langle \Wg(p),
q\rangle$ is always a rational function of $d$. Collins and
\'Sniady showed \cite[Thm. 3.13]{cs} that given $p, q \in
\cP_2(n)$ if we expand in power series in $d^{-1}$ then we
have
\begin{equation}\label{equation:weingartenorder}
\langle \Wg(p), q \rangle = \rO(d^{-n + \#( p \vee q)}).
\end{equation}

\begin{remark}\label{remark:leadingorder}
It was shown in \cite{cs} that the coefficient of $d^{-n +
  \#(p \vee q)}$ can be written as a product of signed
Catalan numbers. Indeed, write $pq = \rho q \rho^{-1} q$ and
factor $\rho$ into a product of cycles $c_1 \cdots c_k$. Let
$C_m$ be the $m^{th}$ Catalan number
$\frac{1}{m+1}\binom{2m}{m}$. Then the coefficient of $d^{-n
  + \#(p \vee q)}$ is
\[
(-1)^{r_1 -1} C_{r_1 - 1} \cdots (-1)^{r_k-1} C_{r_k-1}
\]
where the $i^{th}$ cycle $c_i$ has $r_i$ elements. 
\end{remark}

The reason for introducing $\Wg$ is its use in computing
matrix expectations. For pairings $p, q \in \cP_2(n)$, $p
\delta q \delta$ is a pairing of $[\pm n]$. For a pairing
$r$ of $[\pm n]$ and $i_1, i_{-1},\dots, i_n, i_{-n} \in
       [d]$ we let $\delta_r^i = 1$ if $i_s = i_t$ whenever
       $(s, t)$ is a pair of $r$ and 0 otherwise.

\begin{theorem}[{\cite[Cor. 3.4]{cs}}]
\label{theorem:weingartenexpansion}
When $n$ is even
\[
\E(o_{i_1i_{-1}} \cdots o_{i_ni_{-n}}) = \sum_{p,q \in
  \cP_2(n)} \langle \Wg(p), q \rangle\, \delta_{p\delta q
  \delta}^i.
\]
When $n$ is odd, $\E(o_{i_1i_{-1}} \cdots o_{i_ni_{-n}}) = 0$.
\end{theorem}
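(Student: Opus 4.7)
\medskip

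\noindent\textbf{Proof plan.} The statement is a classical consequence of the bi-invariance of Haar measure together with the First Fundamental Theorem of invariant theory for $O(d)$ (Brauer/Weyl), so I would organize the proof in four stages.

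\emph{Stage 1: the odd case.} Since $-I \in O(d)$, the map $O \mapsto -O$ preserves Haar measure. Applying it to the integrand produces an overall factor of $(-1)^n$, so for odd $n$ the expectation equals its own negative and must vanish.

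\emph{Stage 2: invariance of the expectation tensor.} For even $n$, set
\[
T_{i_1, i_{-1}, \dots, i_n, i_{-n}} = \E(o_{i_1 i_{-1}} \cdots o_{i_n i_{-n}}).
\]
Because $VO$ and $OV$ are Haar distributed for every $V \in O(d)$, the tensor $T$ is $O(d)$-invariant in the ``top'' indices $(i_1,\dots,i_n)$ and, independently, in the ``bottom'' indices $(i_{-1},\dots,i_{-n})$.

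\emph{Stage 3: reduction to pairings.} By the First Fundamental Theorem for $O(d)$, the space of $O(d)$-invariant tensors in $(\bR^d)^{\otimes n}$ is spanned (and, for $d\ge n$, freely spanned) by the tensors $\delta^{i}_{p}$ with $p \in \cP_2(n)$, defined by $\delta^i_p = 1$ iff $i_r = i_s$ for every $(r,s) \in p$. Applying this in the top and bottom indices separately, we obtain an expansion
\[
T_{i_1,i_{-1},\dots,i_n,i_{-n}} = \sum_{p,q \in \cP_2(n)} c_{p,q}\, \delta^{i^+}_{p}\, \delta^{i^-}_{q},
\]
where $i^+ = (i_1,\dots,i_n)$ and $i^-=(i_{-1},\dots,i_{-n})$. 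Unpacking the extension convention of Notation \ref{notation:permutationextension}, the product $\delta^{i^+}_p\,\delta^{i^-}_q$ is exactly $\delta^{i}_{p\delta q\delta}$, so the asserted form of the expansion is already in hand with the correct indexing.

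\emph{Stage 4: identification of the coefficients.} To show $c_{p,q} = \langle \Wg(p),q\rangle$, I would test both sides against $\delta^{i}_{p'\delta q'\delta}$ and sum over all indices, for arbitrary $p',q' \in \cP_2(n)$. Using the expansion, the left hand side becomes
\[
\sum_{p,q} c_{p,q}\Bigl(\sum_{i^+}\delta^{i^+}_{p}\delta^{i^+}_{p'}\Bigr)\Bigl(\sum_{i^-}\delta^{i^-}_{q}\delta^{i^-}_{q'}\Bigr) = \sum_{p,q} c_{p,q}\, d^{\#(p\vee p')}\, d^{\#(q\vee q')}.
\]
On the other hand, imposing the constraint $i^+ = i^+\circ p'$ groups the factors $o_{i_k i_{-k}}$ into pairs sharing a top index; summing each such pair using the column-orthogonality relation $\sum_a o_{a,x}o_{a,y} = \delta_{x,y}$ turns the integrand into the deterministic function $\delta^{i^-}_{p'}$, and further summing against $\delta^{i^-}_{q'}$ gives $d^{\#(p'\vee q')}$. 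Thus for every $p',q'$,
\[
\sum_{p,q} c_{p,q}\, d^{\#(p\vee p')}\, d^{\#(q\vee q')} = d^{\#(p'\vee q')}.
\]
Writing this as the matrix equation $\Phi c \Phi = \Phi$, where $\Phi_{p,p'}=d^{\#(p\vee p')}$ is symmetric, and invoking the invertibility of $\phi$ established in \cite{cs} (the content of the paragraph introducing $\Wg$), one concludes $c = \Phi^{-1}$, i.e.\ $c_{p,q} = \langle \Wg(p),q\rangle$.

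\emph{Anticipated main obstacle.} The genuinely nontrivial input is Stage~3, namely the First Fundamental Theorem: showing that every bi-$O(d)$-invariant tensor is in the span of the Brauer pairing tensors (and, for $d \ge n$, that these tensors are linearly independent so the coefficients $c_{p,q}$ are well defined). Stage~4 is then a clean bookkeeping computation whose only subtlety is the correct identification of $\delta^{i^+}_p\delta^{i^-}_q$ with $\delta^{i}_{p\delta q\delta}$ through the extension convention; once this is in place, the defining relation for $\Wg$ delivers the coefficients immediately.
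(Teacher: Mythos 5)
The paper does not actually prove this statement---it is quoted verbatim from Collins--\'Sniady \cite{cs}---and your argument is a correct reconstruction of essentially their proof: the odd case by $O\mapsto -O$, bi-invariance of the moment tensor, the Brauer/Weyl first fundamental theorem to expand it over the pairing tensors $\delta^{i}_{p\delta q\delta}$, and inversion of the Gram matrix $d^{\#(p\vee q)}$ (via the relation $\Phi C\Phi=\Phi$ obtained from column-orthogonality of $O$) to identify the coefficients with $\langle\Wg(p),q\rangle$. The only points worth flagging are that uniqueness of the coefficients requires the linear independence of the pairing tensors, i.e.\ invertibility of the Gram matrix, which holds under the standing hypothesis $d\ge n$ the paper imposes when defining $\Wg$, and that the first fundamental theorem is the one genuinely nontrivial classical input, which you correctly isolate rather than prove.
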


\begin{corollary}\label{corollary:haarorthogonaldistribution}
Let $O$ be a $d \times d$ Haar distributed orthogonal matrix
and $m$ a non-zero integer. Then
\[
\lim_{d \rightarrow \infty}
\E(\tr(O^m)) = 0.
\]
\end{corollary}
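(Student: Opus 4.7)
The plan is first to reduce to the case of positive $m$: since $O^{-1} = O^t$, we have $\tr(O^{-|m|}) = \tr((O^{|m|})^t) = \tr(O^{|m|})$, so it suffices to treat $m \geq 1$. For odd $m$, Theorem~\ref{theorem:weingartenexpansion} gives $\E(\Tr(O^m)) = 0$ exactly (the Weingarten expansion over an odd number of factors vanishes), and the limit is trivially zero.

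For even $m$, I would apply the Weingarten expansion to $\E(\Tr(O^m)) = \sum_{i_1, \ldots, i_m} \E(o_{i_1 i_2} o_{i_2 i_3} \cdots o_{i_m i_1})$. Identifying the first index of the $k$-th entry with $i_k$ and the second with $i_{-k}$, the cyclic trace structure imposes $i_{-k} = i_{k+1}$ (indices mod $m$). After carrying out the index bookkeeping, and using that $\delta^i_{p \delta q \delta}$ pairs the first indices via $p$ and the second indices via $q$, one should arrive at
\[
\E(\Tr(O^m)) = \sum_{p, q \in \cP_2(m)} \langle \Wg(p), q\rangle\, d^{\#(p \vee \gamma q \gamma^{-1})},
\]
where $\gamma = (1, 2, \ldots, m)$ encodes the cyclic shift that relates the first and second indices.

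Combined with the leading-order estimate (\ref{equation:weingartenorder}), each summand is of order $d^{-m + \#(p \vee q) + \#(p \vee \gamma q \gamma^{-1})}$. Since $p$ has $m/2$ blocks, both $\#(p \vee q)$ and $\#(p \vee \gamma q \gamma^{-1})$ are bounded by $m/2$, so the exponent is $\leq 0$ and $\E(\Tr(O^m)) = \rO(1)$. Dividing by $d$ gives $\E(\tr(O^m)) = \rO(1/d) \to 0$.

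The main obstacle is the index bookkeeping in the Weingarten expansion---carefully combining the cyclic constraint $i_{-k} = i_{k+1}$ with the pairings $p, q$ of $[m]$ to extract the exponent $\#(p \vee \gamma q \gamma^{-1})$. The concluding asymptotic bound is then immediate from the elementary fact that a join can never have more blocks than either of its constituent partitions.
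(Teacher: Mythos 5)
Your proposal is correct and follows essentially the same route as the paper: the Weingarten expansion with the cyclic constraint yields $\E(\Tr(O^m)) = \sum_{p,q} \langle \Wg(p), q\rangle\, d^{\#(p \vee \gamma q \gamma^{-1})}$, and the bound $\#(p\vee q), \#(p\vee\gamma q\gamma^{-1}) \leq \#(p) = m/2$ gives $\E(\Tr(O^m)) = \rO(1)$, hence $\E(\tr(O^m)) \to 0$. The only additions are the (trivial but worthwhile) reduction of negative $m$ to positive $m$ via $\tr(O^{-|m|}) = \tr((O^{|m|})^t)$ and an explicit justification of the exponent bound, both of which the paper leaves implicit.
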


\begin{proof}
Let $\gamma \in S_m$ be the permutation with the one cycle
$(1, 2, 3, \dots,\ab m)$. If $m$ is odd then $\E(\Tr(O^m)) =
0$. So suppose that $m$ is even. First let us consider
\begin{eqnarray*}
\E(\Tr(O^m)) & = & \sum_{i_1, \dots, i_m=1 }^d
\E(o_{i_1i_{\gamma(1)}} \cdots o_{i_ni_{\gamma(n)}}) \\ & =
& \sum_{i_1, \dots, i_m} \sum_{p,q \in \cP_2(m)} \langle
\Wg(p), q \rangle \delta_p^i \delta_q^{i\gamma},
\end{eqnarray*}
where $i\gamma$ is the $m$-tuple $(i_{\gamma(1)}, \dots,
i_{\gamma(m)})$. Now $\delta_q^{i\gamma} = \delta_{\gamma q
  \gamma^{-1}}^i$. Thus $\delta_p^i \delta_q^{i\gamma} = 1$
only when $i$ is constant on the blocks of $p \vee \gamma
q\gamma^{-1}$. Hence
\[
\E(\Tr(O^m)) = \sum_{p, q \in \cP_2(m)} \langle \Wg(p),
q\rangle d^{\#(p \vee \gamma q\gamma^{-1})}.
\] 
Thus $\E(\Tr(O^m)) = \rO(d^{-m + \#(p \vee q) + \#(p \vee
  \gamma q\gamma^{-1})})$. But $-m + \#(p \vee q) + \#(p
\vee \gamma q\gamma^{-1})\leq 0$. Hence $\lim_{d \rightarrow
  \infty} \E(\tr(O^m)) = 0$.
\end{proof}

\begin{notation}\label{notation:krewerascomplement}
Let $\gamma \in S_n$ be a permutation of $[n]$ but, as in
Notation \ref{notation:permutationextension}, considered as
a permutation of $[\pm n]$ by setting $\gamma(-k) = -k$ for
$1 \leq k \leq n$. Given $\epsilon \in \bZ_2^n$ and $p, q
\in \cP_2(n)$ we consider the pairing of $[\pm n]$ given by
$p \mskip 2mu\cdot_\epsilon q = (\gamma\delta)^{-1}
\delta_\epsilon p \delta q \delta \delta_\epsilon (\gamma
\delta)$ of $[\pm n]$. By Lemma
\ref{lemma:pairingtopermutation} there is a permutation
$\pi_{p \mskip 2mu\cdot_\epsilon q} \in S_n$ and
$\eta_{p \mskip 2mu\cdot_\epsilon q} \in \bZ_2^n$ such that
\[
\mathop{\sum_{i_1, i_{-1}, \dots, i_{n}, i_{-n}=1}}_{i = i
  \circ p \mskip 2mu\cdot_\epsilon q}^d a^{(1)}_{i_1i_{-1}}
a^{(2)}_{i_2i_{-2}} \cdots a^{(k)}_{i_{k}i_{-k}} \cdots
a^{(n)}_{i_{n}i_{-n}} = \Tr_{(\pi_{p \mskip
    2mu\cdot_\epsilon q}, \eta_{p \mskip 2mu\cdot_\epsilon
    q})} (A_1, \dots, A_n).
\]
Note that $p$ is a pairing of $[n]$, $\delta q \delta$ is a
pairing of $[-n]$ and so $p\delta q \delta$ is a pairing of
$[\pm n]$. If we adopt the notation $\gamma_- =
\delta\gamma\delta$ then $(p \mskip 2mu \cdot_\epsilon q)
\delta = \gamma_-^{-1} \delta_\epsilon q \delta p
\delta_\epsilon \gamma$. Recall from the proof of Lemma
\ref{lemma:pairingtopermutation} that $\pi_{p \mskip
  2mu\cdot_\epsilon q}$ was obtained by writing $(p \mskip
2mu\cdot_\epsilon q) \delta$ as a product of cycles and
taking one cycle of each pair $\{c, c'\}$.  After this
choice has been made $\eta_{p \mskip 2mu\cdot_\epsilon q}$
records the position of the minus signs.

\end{notation}

\begin{proposition}\label{proposition:exact_formula}
Let $O$ be a Haar distributed $d \times d$ random orthogonal
matrix and $\{Y_1, \dots, Y_n\}$ $d \times d$ random
matrices which are independent from $O$ and whose entries
have moments of all orders. Let $\gamma \in S_n$, $\epsilon
\in \bZ_2^n$ and suppose $d \geq n$.

\begin{eqnarray*}
\lefteqn{
\E(\Tr_\gamma(O^{\epsilon_1}Y_1, \dots,
O^{\epsilon_n}Y_n))}\\ & = & \sum_{p,q \in \cP_2(n)} \langle
  \Wg(p), q\rangle \E(\Tr_{(\pi_{p \mskip 2mu\cdot_\epsilon
      q}, \eta_{p \mskip 2mu\cdot_\epsilon q})}(Y_1, \cdots,
  Y_n))
\end{eqnarray*}
\end{proposition}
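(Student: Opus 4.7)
The plan is to expand both sides in matrix entries and apply the Weingarten formula of Theorem \ref{theorem:weingartenexpansion} together with Lemma \ref{lemma:pairingtopermutation}. Inserting a resolution of identity in each product $O^{\epsilon_k}Y_k$ and using the definition of $\Tr_\gamma$, I would write
\[
\Tr_\gamma(O^{\epsilon_1}Y_1,\dots,O^{\epsilon_n}Y_n) = \sum_{i_1,\dots,i_n,\,j_1,\dots,j_n=1}^d \prod_{k=1}^n (O^{\epsilon_k})_{i_k, j_k}\,(Y_k)_{j_k, i_{\gamma(k)}}.
\]
Since $(O^{\epsilon_k})_{a,b}$ equals $o_{a,b}$ when $\epsilon_k=1$ and $o_{b,a}$ when $\epsilon_k=-1$, I introduce indices on $[\pm n]$ by setting $n_k = i_k$ and $n_{-k} = j_k$, and relabel so that the $k$-th $O$-factor becomes $o_{m_k, m_{-k}}$ with $m = n\circ\delta_\epsilon$ (this swap of row and column at position $k$ occurs exactly when $\epsilon_k=-1$). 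Independence of $O$ from $\{Y_1,\dots,Y_n\}$ and Theorem \ref{theorem:weingartenexpansion} then give
\[
\E\left(\prod_{k=1}^n o_{m_k, m_{-k}}\right) = \sum_{p,q\in\cP_2(n)}\langle\Wg(p),q\rangle\,\delta^m_{p\delta q\delta}.
\]

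Next, I would perform the index sum. The constraint $m = m\circ(p\delta q\delta)$, under $m = n\circ\delta_\epsilon$, translates to $n = n\circ(\delta_\epsilon\, p\delta q\delta\,\delta_\epsilon)$. On the $Y$-side, the factor $(Y_k)_{j_k, i_{\gamma(k)}}$ equals $(Y_k)_{n_{-k}, n_{\gamma(k)}}$, and the key observation is that $(\gamma\delta)(k) = -k$ and $(\gamma\delta)(-k) = \gamma(k)$, so setting $h = n\circ(\gamma\delta)$ yields $(Y_k)_{h_k, h_{-k}}$ exactly. Transferring the constraint through this change of variables produces
\[
h = h\circ\bigl((\gamma\delta)^{-1}\delta_\epsilon\, p\delta q\delta\,\delta_\epsilon(\gamma\delta)\bigr) = h\circ(p\cdot_\epsilon q),
\]
which is precisely the pairing of $[\pm n]$ from Notation \ref{notation:krewerascomplement}. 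The inner sum then has the form
\[
\mathop{\sum_{h_1,h_{-1},\dots,h_n,h_{-n}=1}}_{h = h\circ (p\cdot_\epsilon q)}^{d} \prod_{k=1}^n (Y_k)_{h_k, h_{-k}},
\]
and Lemma \ref{lemma:pairingtopermutation} identifies it as $\Tr_{(\pi_{p\cdot_\epsilon q},\,\eta_{p\cdot_\epsilon q})}(Y_1,\dots,Y_n)$.

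Taking expectations and exchanging the finite double sum over $p,q$ with $\E$ then yields the claimed identity. The only real difficulty is the bookkeeping in the middle step: verifying that composing the $\delta_\epsilon$-relabelling that puts the $O$-factors into standard Weingarten form with the $\gamma\delta$-relabelling that puts the $Y$-factors into the form required by Lemma \ref{lemma:pairingtopermutation} produces precisely the conjugation $(\gamma\delta)^{-1}\delta_\epsilon(\,\cdot\,)\delta_\epsilon(\gamma\delta)$ defining $p\cdot_\epsilon q$. Once this identification is in place the lemma supplies the final translation into a trace and no further calculation is needed.
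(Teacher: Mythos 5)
Your proof is correct and follows essentially the same route as the paper's: expand $\Tr_\gamma$ in matrix entries, use independence to factor the expectation, apply the Weingarten formula of Theorem \ref{theorem:weingartenexpansion} to the $O$-factors after the $\delta_\epsilon$ relabelling, transfer the index constraint through the $\gamma\delta$ change of variables to obtain $h = h\circ(p\cdot_\epsilon q)$, and invoke Lemma \ref{lemma:pairingtopermutation}. The bookkeeping you flag as the only delicate point is exactly the computation the paper carries out, and your composite conjugation matches the definition of $p\cdot_\epsilon q$ in Notation \ref{notation:krewerascomplement}.
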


\begin{proof}
\begin{eqnarray*}\lefteqn{
\E(\Tr_\gamma(O^{\epsilon_1} Y_1 \cdots O^{\epsilon_n}
Y_n))} \\ & = & \sum_{j_1, \dots, j_{-n}=1}^d
  \E(o^{(\epsilon_1)}_{j_1j_{-1}} \cdots
  o^{(\epsilon_n)}_{j_{n}j_{-n}})\ \E(y^{(1)}_{j_{-1}j_{\gamma(1)}}
  \cdots y^{(n)}_{j_{-n}j_{\gamma(n)}})
\end{eqnarray*}
Now for notational convenience let $\epsilon(k) =
\epsilon_{|k|}k$ and let $l_k = j_{\epsilon(k)}$, then
$o^{(\epsilon_k)}_{j_kj_{-k}} = o_{l_k l_{-k}}$. Thus
\begin{equation}\label{equation:simpleorthogonalexpansion}
\E(o^{(\epsilon_1)}_{j_1j_{-1}} \cdots
o^{(\epsilon_n)}_{j_nj_{-n}}) = \E(o_{l_1l_{-1}}\cdots
o_{l_n l_{-n}}) = \sum_{p, q \in \cP_2(n)} \langle \Wg(p), q
\rangle \delta^l_{p \delta q\delta},
\end{equation}
where $\delta^l_{p\delta q\delta} = 1$ if $l = l \circ
p\delta q\delta$. Also $y^{(k)}_{j_{-k}j_{k+1}} =
y^{(k)}_{l_{-\epsilon(k)} l_{\epsilon\gamma(k)}}$. Hence we
have
\begin{eqnarray*}\lefteqn{
\E(\Tr_\gamma(O^{\epsilon_1} Y_1 \cdots O^{\epsilon_n}
Y_n))} \\ & = & \sum_{l_1, \dots, l_{-n}} \E(o_{l_1l_{-1}}
  \cdots
  o_{l_nl_{-n}})\ \E(y^{(1)}_{l_{-\epsilon(1)}l_{\epsilon\gamma(1)}}
  \cdots y^{(n)}_{l_{-\epsilon(n)}l_{\epsilon\gamma(n)}}
  )\\ & = & \sum_{p,q \in \cP_2(n)} \langle \Wg(p), q
  \rangle \mathop{\sum_{l_1, \dots, l_{-n}}}_{l = l\circ p
    \delta q \delta}
  \E(y^{(1)}_{l_{-\epsilon(1)}l_{\epsilon\gamma(1)}} \cdots
  y^{(n)}_{l_{-\epsilon(n)}l_{\epsilon\gamma(n)}}).
\end{eqnarray*}

Let $i = l \circ \epsilon \gamma\delta$. Then $i_1 =
l_{-\epsilon(1)}, i_{-1} = l_{\epsilon\gamma(1)}, \dots ,
i_n = l_{-\epsilon(n)}, i_{-n} =
l_{\epsilon\gamma(n)}$. Thus as $p \mskip 2mu\cdot_\epsilon
q = \delta \gamma^{-1} \delta_\epsilon p \delta q \delta
\delta_\epsilon \gamma \delta$ we have
\[
\mathop{\sum_{l_1, \dots, l_{-n}}}_{l = l\circ p \delta q
  \delta} \E(y^{(1)}_{l_{-\epsilon(1)}l_{\epsilon\gamma(1)}}
\cdots y^{(n)}_{l_{-\epsilon(n)}l_{\epsilon\gamma(n)}}) =
\mathop{\sum_{i_1, \dots, i_{-n}}}_{i = i \circ p \mskip
  2mu\cdot_\epsilon q} \E(y^{(1)}_{i_1 i_{-1}} \cdots
y^{(n)}_{i_ni_{-n}})
\]

So
\begin{eqnarray*}\lefteqn{
\E(\Tr_\gamma(O^{\epsilon_1} Y_1 \cdots O^{\epsilon_n}
Y_n))} \\ & = & \sum_{p,q \in \cP_2(n)} \langle \Wg(p), q
  \rangle \mathop{\sum_{i_1, \dots, i_{-n}}}_{i = i \circ
    p \mskip 2mu\cdot_\epsilon q} \E(y^{(1)}_{i_1 i_{-1}}
  \cdots y^{(n)}_{i_ni_{-n}})\\ & = & \sum_{p,q \in
    \cP_2(n)} \langle \Wg(p), q \rangle \E(
  \Tr_{(\pi_{p \mskip 2mu\cdot_\epsilon q}, \eta_{p \mskip
      2mu\cdot\epsilon q})}(Y_1, \dots, Y_n)).
\end{eqnarray*}
\end{proof}

We shall need a special case of this result in section
\ref{section:mainresults}. Let us say that a permutation
$\pi$ is \textit{parity preserving} if for all $k$, $\pi(k)$
and $k$ have the same parity.

\begin{lemma}\label{lemma:evenodd}
Let $n_1, n_2, \dots, n_r$ be even positive integers and $n
= n_1 + \cdots + n_r$. Let $\gamma = (1,2, \dots,
n_1)(n_1+1, \dots, n_1 + n_2) \cdots (n_1 + \cdots +
n_{r-1}+1, \dots, n_1 + \cdots + n_r) \in S_n$. Suppose that
$\epsilon \in \bZ_2^n$ is such that $\epsilon_k =
(-1)^{k+1}$. Then for all $p, q \in \cP_2(n)$, $\pi_\peq$ is
parity preserving.
\end{lemma}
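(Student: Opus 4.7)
The plan is to reduce parity-preservation of $\pi_\peq$ to a parity-preservation statement about the permutation $(\peq)\delta$ of $[\pm n]$, and then verify the latter by a direct trace through the composition, using that every cycle of $\gamma$ has even length and that $\epsilon_k=(-1)^{k+1}$.

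First I would observe that, by the construction in the proof of Lemma \ref{lemma:pairingtopermutation}, each cycle $(j_1,\dots,j_r)$ of $\pi_\peq$ is obtained from a cycle $(l_1,\dots,l_r)$ of $(\peq)\delta$ by setting $j_k=|l_k|$. Hence $\pi_\peq$ is parity preserving if and only if, in every cycle of $(\peq)\delta$, all elements have absolute value of the same parity; equivalently, $|(\peq)\delta(k)|\equiv|k|\pmod 2$ for every $k\in[\pm n]$. This is the statement I would actually prove.

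I would then record the factorisation
\[
(\peq)\delta \;=\; \delta\,\gamma^{-1}\,\delta_\epsilon\; p\;\delta\; q\;\delta\; \delta_\epsilon\,\gamma
\]
together with four simple parity facts: $\delta$ and $\delta_\epsilon$ preserve $|\cdot|$; because each $n_j$ is even, $\gamma$ (and $\gamma^{-1}$) reverses parity on $[n]$ and is the identity on $[-n]$; $p$ and $q$ act on $[n]$ but are the identity on $[-n]$; and under $\epsilon_k=(-1)^{k+1}$, the involution $\delta_\epsilon$ fixes $k$ when $|k|$ is odd and sends $k$ to $-k$ when $|k|$ is even. A short routing computation then yields the key dichotomy: the prefix $\delta\,\delta_\epsilon\,\gamma$ sends $k$ into the positive half $[n]$ precisely when $|k|$ is odd (so that $q$ acts non-trivially while the subsequent $p$ is the identity), and into $[-n]$ precisely when $|k|$ is even (so that $q$ is the identity and, after the intermediate $\delta$, $p$ acts non-trivially). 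Once the pairing has been applied and produced some positive $m\in[n]$ of possibly arbitrary parity, the trailing block $\delta\,\gamma^{-1}\,\delta_\epsilon$ absorbs this ambiguity: if $m$ is odd then $\delta_\epsilon$ leaves the sign alone, $\gamma^{-1}$ acts trivially or reverses parity according to the branch, and the final $\delta$ flips the sign; if $m$ is even the roles of $\gamma^{-1}$ and $\delta_\epsilon$ swap in a mirror-image way. In each of the four cases indexed by the sign of $k$ and the parity of $|k|$, one verifies in a line that the output has the same $|\cdot|$-parity as the input.

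The only real obstacle is the bookkeeping of signs and parities across this nine-fold composition in four cases; there is no deeper combinatorial input beyond the evenness of each $n_j$ (which is exactly what makes $\gamma$ parity-reversing on $[n]$) together with the staggered form of $\epsilon$, whose combined effect is precisely to prevent either $p$ or $q$ from transporting an element between the two $|\cdot|$-parity classes.
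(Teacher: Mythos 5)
Your argument is correct and follows essentially the same route as the paper: a case-by-case parity chase through the defining composition $\delta\gamma^{-1}\delta_\epsilon\, p\,\delta\, q\,\delta\,\delta_\epsilon\gamma\delta$, driven by the two facts that $\gamma$ reverses parity on $[n]$ because each $n_j$ is even and that $\delta_\epsilon$ fixes odd-indexed points and negates even-indexed ones, followed by the observation that the cycles of $\pi_\peq$ arise from those of $(\peq)\delta$ by taking absolute values. The only cosmetic difference is that you trace $(\peq)\delta$ directly where the paper first records explicit formulas for $\peq$ on the four sign/parity classes; the content is identical.
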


\begin{proof}
We first show that $\peq = \delta \gamma^{-1}
\delta_\epsilon p \delta q \delta \delta_\epsilon \gamma
\delta$ is parity preserving. By direct computation we have
the following.
\begin{align*}
\peq(2k-1)& = \left\{ 
\begin{array}{cc}
-\gamma^{-1}(q(2k-1)) & q(2k-1) \mbox{\ is even},\\
q(2k-1) & q(2k-1) \mbox{\ is odd};
\end{array}\right. \\
\peq(-(2k-1)) &= \left\{ 
\begin{array}{cc}
-\gamma^{-1}(q(2k)) & q(2k) \mbox{\ is even,}\\
q(2k) & q(2k) \mbox{\ is odd.}
\end{array}\right.
\end{align*}
Note that since $\gamma$ always reverses the parity of its
argument, all four possible outcomes are odd. Thus $\peq$
takes odd numbers to odd numbers. Since $\peq$ is a
permutation it must then take even numbers to even
numbers. Indeed
\begin{align*}
\peq(2k)& = \left\{ 
\begin{array}{cc}
p(2k) & p(2k) \mbox{\ is even}\\
-\gamma^{-1}(p(2k)) & p(2k) \mbox{\ is odd}
\end{array}\right. \\
\peq(-(2k)) &= \left\{ 
\begin{array}{cc}
p(\gamma(2k)) & p(\gamma(2k)) \mbox{\ is even}\\
-\gamma^{-1}(p(\gamma(2k))) & p(\gamma(2k)) \mbox{\ is odd}
\end{array}\right.
\end{align*}
Now $\delta(k) = -k$ is parity preserving, thus so is
$(\peq)\delta$. Finally $\pi_\peq$ is obtained by choosing
one representative of each pair $\{c, \delta c^{-1}
\delta\}$ of $(\peq)\delta$, and taking the absolute value
of each entry. This means that each cycle will consist of
integers of the same parity. Hence $\pi_\peq$ is parity
preserving.
\end{proof}

We wish to extend the conclusion of Proposition
\ref{proposition:exact_formula} to case where some of the
$Y$'s are not interleaved by orthogonal matrices.

\begin{proposition}
\label{proposition:exact_formulafirstextension}
Let $O$ be a Haar distributed $d \times d$ random orthogonal
matrix and $\{Y_1, \dots, Y_n\}$ $d \times d$ random
matrices which are independent from $O$ and whose entries
have moments of all orders. Let $1 \leq m \leq n$, $\gamma
\in S_m$, $\epsilon \in \bZ_2^m$ and suppose $d \geq m$.

\begin{eqnarray*}
\lefteqn{
\E(\Tr_\gamma(O^{\epsilon_1}Y_1, \dots,O^{\epsilon_m}Y_m)
\Tr(Y_{m+1}) \cdots \Tr(Y_n))}\\ 
& = & 
\sum_{p,q \in \cP_2(m)} \kern-1em \langle
  \Wg(p), q\rangle \E(\Tr_{(\pi_{p \mskip 2mu\cdot_\epsilon
      q}, \eta_{p \mskip 2mu\cdot_\epsilon q})}(Y_1, \cdots,
  Y_m) \Tr(Y_{m+1}) \cdots \Tr(Y_n))
\end{eqnarray*}

\end{proposition}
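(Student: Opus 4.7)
The approach is a direct reduction to Proposition~\ref{proposition:exact_formula}, exploiting the fact that the extra factors $\Tr(Y_{m+1}) \cdots \Tr(Y_n)$ depend only on matrices independent of $O$. First I would condition on the full tuple $(Y_1, \dots, Y_n)$. Since $\Tr(Y_{m+1}) \cdots \Tr(Y_n)$ is then a deterministic scalar, it pulls out of the inner (conditional) expectation, and the problem reduces to computing $\E_O[\Tr_\gamma(O^{\epsilon_1}Y_1, \dots, O^{\epsilon_m}Y_m)]$ with $Y_1, \dots, Y_m$ treated as fixed $d \times d$ matrices.

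Next I would invoke a deterministic-$Y$ variant of Proposition~\ref{proposition:exact_formula}. Inspection of its proof shows that the randomness of the $Y_i$'s is used only in the factorization $\E(o \cdots o\, y \cdots y) = \E(o \cdots o)\,\E(y \cdots y)$ coming from the independence of $O$ and the $Y_i$'s; when the $Y_i$ are deterministic this factorization is automatic, and the rest of the proof (the Weingarten expansion of Theorem~\ref{theorem:weingartenexpansion} followed by the index-to-trace translation of Lemma~\ref{lemma:pairingtopermutation}) goes through verbatim. Hence, conditionally on $Y_1, \dots, Y_n$,
\[
\E_O\bigl(\Tr_\gamma(O^{\epsilon_1}Y_1, \dots, O^{\epsilon_m}Y_m)\bigr) = \sum_{p,q \in \cP_2(m)} \langle \Wg(p), q \rangle\, \Tr_{(\pi_\peq, \eta_\peq)}(Y_1, \dots, Y_m)
\]
as an identity of random variables in $Y_1, \dots, Y_m$. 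Multiplying through by the pulled-out $\Tr(Y_{m+1}) \cdots \Tr(Y_n)$ and then taking the outer expectation, and using linearity to move the sum outside, gives exactly the claimed formula.

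Equivalently, and perhaps more transparently, one can just re-run the index expansion from the proof of Proposition~\ref{proposition:exact_formula}: by independence of $O$ from the $Y_i$'s the joint expectation factors into an $O$-part and a $Y$-part, and the extra factors $\Tr(Y_{m+1}) \cdots \Tr(Y_n)$ contribute only additional diagonal index sums on the $Y$-side. These sums do not touch the first $2m$ matrix indices that the Weingarten pairings $p, q \in \cP_2(m)$ constrain via $i = i \circ \peq$, so they simply ride along as multiplicative factors inside $\E(\cdots)$. There is no real obstacle here; the only bookkeeping point is to verify that the constraint $i = i \circ \peq$ from Lemma~\ref{lemma:pairingtopermutation} and Notation~\ref{notation:krewerascomplement} involves only the indices coming from the orthogonal-interleaved block $O^{\epsilon_1}Y_1, \dots, O^{\epsilon_m}Y_m$, which is immediate from the definition of $\peq$ as a pairing of $[\pm m]$.
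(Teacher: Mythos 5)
Your proposal is correct and matches the paper's proof, which simply reruns the argument of Proposition~\ref{proposition:exact_formula} with the factor $\Tr(Y_{m+1})\cdots\Tr(Y_n)$ appended to each expression; your second formulation is literally this, and your conditioning argument is an equivalent repackaging of the same independence-based factorization. Note only that since $Y_{m+1},\dots,Y_n$ need not be independent of $Y_1,\dots,Y_m$, the extra traces must remain inside the $Y$-expectation on the right-hand side, which your conditioning step handles correctly.
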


\begin{proof}
The proof is the same as for Proposition
\ref{proposition:exact_formula} except that we append the
random variable $\Tr(Y_{m+1}) \cdots \Tr(Y_n)$ to the right
hand side of each expression.
\end{proof}

We now wish to extend the conclusion of Proposition
\ref{proposition:exact_formula} in another way, namely to
the case of independent Haar distributed orthogonal
matrices. Suppose $\{O_1, \dots, O_s\}$ are independent Haar
distributed $d \times d$ orthogonal matrices, with the
$(i,j)$ entry of $O_k$ denoted $o{}_{(k)ij}$. We shall need
a expression for $\E(o_{(k_1)i_1i_{-1}} o_{(k_2)i_2i_{-2}}
\cdots o_{(k_n)i_ni_{-n}})$ extending that given in Theorem
\ref{theorem:weingartenexpansion}.

\begin{notation}
Given an $n$-tuple $(i_1, i_2, \dots, i_n)$ of integers in
$[s]$ we let $\ker(i)$ be the partition of $[n]$ such that
$i_r = i_s$ where $r$ and $s$ are in the same block of
$\ker(i)$ and $i_r \not= i_s$ when $r$ and $s$ are in
different blocks of $\ker(i)$.

Let $\cU \in \cP(n)$ be a partition of $[n]$ and $p \in
\cP_2(n)$ be a pairing such that each pair of $p$ lies in
some block of $\cU$. We shall denote this by $p \leq
\cU$. If we write the blocks of $\cU$ as $\{U_1, \dots,
U_r\}$, then the pairs of $p$ that lie in $U_i$ form a
pairing of $U_i$ which we shall denote by $p_{|U_i}$ or just
$p_i$ when convenient.

If we have a partition $\cU$ and pairings $p, q \in
\cP_2(n)$ with $p,q \leq \cU$ then we let
\[
\Wg(\cU,p,q) =
\langle \Wg(p_1), q_1 \rangle \cdots
\langle \Wg(p_r), q_r \rangle.
\]
\end{notation}

\begin{remark}\label{remark:subleadingorder}
Note that since $\Wg$ is not multiplicative, $\Wg(\cU, p,q)$
and $\langle \Wg(p), q\rangle$ are different. However by
Remark \ref{remark:leadingorder} we see that when $p, q \leq
\cU$ then $\Wg(\cU, p, q) - \langle \Wg(p), q \rangle =
\rO(d^{-n + \#(p \vee q)-1})$ as the leading terms in both
expressions are the same.
\end{remark}

\begin{lemma}
Suppose $\{O_1, \dots, O_s\}$ are independent Haar
distributed $d \times d$ orthogonal matrices. Let the
$(i,j)$ entry of $O_k$ be denoted $o_{(k)i,j}$. Given an
$n$-tuple $(k_1, \dots, k_n)$ in $[s]$ then
\[
\E(o_{(k_1)i_1i_{-1}} o_{(k_2)i_2i_{-2}} \cdots o_{(k_n)i_ni_{-n}})
=
\mathop{\sum_{p,q \in \cP_2(n)}}_{p, q \leq \ker(k)}
\Wg(\ker(k), p,q)\, \delta_{p\delta q \delta}^i.
\]

\end{lemma}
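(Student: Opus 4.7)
The plan is to reduce the multi-matrix expectation to a product of single-matrix expectations using independence, and then apply Theorem \ref{theorem:weingartenexpansion} to each factor.

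First, write $\ker(k) = \{U_1,\dots,U_r\}$ so that the block $U_j$ consists of exactly those indices $l \in [n]$ with $k_l$ equal to some fixed value; call that value $\kappa_j$. Since $O_1,\dots,O_s$ are independent, I would factor
\[
\E\Big(\prod_{l=1}^n o_{(k_l)\,i_l\, i_{-l}}\Big) = \prod_{j=1}^r \E\Big(\prod_{l \in U_j} o_{(\kappa_j)\,i_l\, i_{-l}}\Big),
\]
since the factors for distinct $\kappa_j$ involve independent matrices. If some $U_j$ has odd cardinality, the corresponding factor vanishes by the odd case of Theorem \ref{theorem:weingartenexpansion}; on the other side of the claimed identity, there are then no pairings $p \le \ker(k)$ because $p_{|U_j}$ cannot exist, so the sum is empty as well. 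Hence we may assume each $|U_j|$ is even.

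Next, apply Theorem \ref{theorem:weingartenexpansion} (with $n$ replaced by $|U_j|$, and with $[|U_j|]$ relabeled by the elements of $U_j$ via the unique order-preserving bijection) to each factor. This yields
\[
\E\Big(\prod_{l \in U_j} o_{(\kappa_j)\,i_l\, i_{-l}}\Big) = \sum_{p_j, q_j \in \cP_2(U_j)} \langle \Wg(p_j), q_j\rangle\, \delta^{i|_{U_j}}_{p_j\delta q_j\delta},
\]
where $\delta^{i|_{U_j}}_{p_j\delta q_j\delta}$ is the indicator that $i$ restricted to the indices in $U_j$ (and their negatives) is constant on the pairs of $p_j\delta q_j\delta$. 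Expanding the product over $j$ of these sums gives a sum over all choices $(p_1,q_1,\dots,p_r,q_r)$ of pairings $p_j, q_j$ of each $U_j$.

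Finally I would repackage: a family $(p_1,\dots,p_r)$ with $p_j \in \cP_2(U_j)$ corresponds bijectively to a single pairing $p \in \cP_2(n)$ with $p \le \ker(k)$, via $p = p_1 \cup \cdots \cup p_r$, and likewise for $q$. Under this correspondence
\[
\prod_{j=1}^r \langle \Wg(p_j), q_j\rangle = \Wg(\ker(k), p, q)
\]
by definition, and
\[
\prod_{j=1}^r \delta^{i|_{U_j}}_{p_j\delta q_j\delta} = \delta^i_{p\delta q\delta},
\]
since being constant on the pairs within each block is the same as being constant on all pairs of the globally combined pairing. This yields the claimed formula. The only subtlety is the bookkeeping of the block-wise versus global pairing data; once this identification is made the result is immediate from the single-matrix case.
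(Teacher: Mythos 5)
Your proposal is correct and follows essentially the same route as the paper: factor the expectation over the blocks of $\ker(k)$ by independence, apply Theorem \ref{theorem:weingartenexpansion} to each block, and recombine the block-wise pairings into global pairings $p,q\leq\ker(k)$, matching $\prod_j\langle\Wg(p_j),q_j\rangle$ with $\Wg(\ker(k),p,q)$. Your explicit treatment of blocks of odd cardinality is a small addition the paper leaves implicit, but the argument is the same.
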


\begin{proof}
We can write $\E(o_{(k_1)i_1i_{-1}} o_{(k_2)i_2i_{-2}}
\cdots o_{(k_n)i_ni_{-n}})$ as a product of expectations,
one for each block of $\ker(k)$. For each block $U_j$ of
$\ker(k)$ we get a factor $ \sum_{p_j, q_j \in \cP_2(U_i)}
\langle \Wg(p_j), q_j \rangle d_{p_j \delta q_j
  \delta}^{i_j} $ where $i_j$ is the restriction of $i$ to
the block $U_j$. Taking the product of these terms we get
$\ds\mathop{\sum_{p,q \in \cP_2(n)}}_{p, q \leq \ker(k)}
\Wg(\ker(k), p,q)\, \delta_{p\delta q \delta}^i$.
\end{proof}

\begin{proposition}\label{proposition:exactformulaextended}
Let $\{O_1, \dots, O_s\}$ be independent Haar distributed $d
\times d$ orthogonal matrices and $\{Y_1, \dots, Y_n\}$
$d \times d$ random matrices which are independent from $\{O_1, \dots,
O_s\}$ and whose entries have moments of all orders. Let
$\gamma \in S_n$, $\epsilon \in \bZ_2^n$ and suppose $d \geq
n$. For each $n$-tuple $(k_1, \dots, k_n)$ in $[s]$ we have

\begin{eqnarray*}
\lefteqn{
\E(\Tr_\gamma(O_{k_1}^{\epsilon_1}Y_1, \dots,
O_{k_n}^{\epsilon_n}Y_n))}\\ & = & 
\mathop{\sum_{p,q \in \cP_2(n)}}_{p,q \leq \ker(k)}
  \Wg(\ker(k), p, q) \E(\Tr_{(\pi_{p \mskip 2mu\cdot_\epsilon
      q}, \eta_{p \mskip 2mu\cdot_\epsilon q})}(Y_1, \cdots,
  Y_n)).
\end{eqnarray*}

\end{proposition}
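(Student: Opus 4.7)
The plan is to follow the proof of Proposition \ref{proposition:exact_formula} line by line, with the only substantive modification being the substitution of the preceding lemma (the extended Weingarten expansion for independent Haar orthogonal matrices) in place of Equation (\ref{equation:simpleorthogonalexpansion}) (which came from Theorem \ref{theorem:weingartenexpansion}). The structural reason this works is that independence of the $O_k$'s from the $Y_j$'s lets us factor
\[
\E\bigl(\Tr_\gamma(O_{k_1}^{\epsilon_1}Y_1,\dots,O_{k_n}^{\epsilon_n}Y_n)\bigr)
= \sum_{j_1,\dots,j_{-n}=1}^d \E\bigl(o^{(\epsilon_1)}_{(k_1)j_1j_{-1}}\cdots o^{(\epsilon_n)}_{(k_n)j_nj_{-n}}\bigr)\,\E\bigl(y^{(1)}_{j_{-1}j_{\gamma(1)}}\cdots y^{(n)}_{j_{-n}j_{\gamma(n)}}\bigr),
\]
so all that changes compared with Proposition \ref{proposition:exact_formula} is the Weingarten piece.

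First I would set $\epsilon(k)=\epsilon_{|k|}k$ and $l_k=j_{\epsilon(k)}$, so that $o^{(\epsilon_k)}_{(k_k)j_kj_{-k}}=o_{(k_k)l_kl_{-k}}$, exactly as in the original proof. Applying the preceding lemma, the orthogonal expectation becomes
\[
\E\bigl(o_{(k_1)l_1l_{-1}}\cdots o_{(k_n)l_nl_{-n}}\bigr)
= \mathop{\sum_{p,q\in\cP_2(n)}}_{p,q\le \ker(k)} \Wg(\ker(k),p,q)\,\delta^l_{p\delta q\delta}.
\]
The crucial point is that the restriction $p,q\le\ker(k)$ depends only on the tuple $k=(k_1,\dots,k_n)$ and is unaffected by the subsequent change of indices, so it passes through every subsequent manipulation without any interaction.

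Next I would carry out the substitution $i=l\circ\epsilon\gamma\delta$ (so that $i_r=l_{-\epsilon(r)}$ and $i_{-r}=l_{\epsilon\gamma(r)}$) just as in the proof of Proposition \ref{proposition:exact_formula}. Using $\peq=\delta\gamma^{-1}\delta_\epsilon p\delta q\delta\delta_\epsilon\gamma\delta$ from Notation \ref{notation:krewerascomplement}, the constraint $l=l\circ p\delta q\delta$ is converted into $i=i\circ\peq$. Then Lemma \ref{lemma:pairingtopermutation} identifies the resulting constrained sum as
\[
\mathop{\sum_{i_1,\dots,i_{-n}}}_{i=i\circ\peq}\E\bigl(y^{(1)}_{i_1i_{-1}}\cdots y^{(n)}_{i_ni_{-n}}\bigr) = \E\bigl(\Tr_{(\pi_{\peq},\eta_{\peq})}(Y_1,\dots,Y_n)\bigr),
\]
and collecting everything yields the claimed formula.

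There is no real obstacle here; the argument is a direct transcription of the single-matrix proof. The only thing one must check is that the partition constraint $p,q\le\ker(k)$ and the modified Weingarten factor $\Wg(\ker(k),p,q)$ can simply be carried as a label on the sum throughout the computation, which they can because the $Y$-part of the calculation is entirely decoupled from the block structure of $\ker(k)$.
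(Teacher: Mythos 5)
Your proposal is correct and is essentially identical to the paper's own proof, which likewise observes that the only change from Proposition \ref{proposition:exact_formula} is the replacement of Equation (\ref{equation:simpleorthogonalexpansion}) by the multi-matrix Weingarten expansion with the constraint $p,q\le\ker(k)$, after which the rest of the argument goes through unchanged. Your additional remark that the constraint depends only on $k$ and passes through the change of indices is a correct (and slightly more explicit) justification of the paper's terser "the remainder of the proof is unchanged."
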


\begin{proof}
The only point where the proof differs from the proof of
Proposition \ref{proposition:exact_formula} is in Equation
\ref{equation:simpleorthogonalexpansion}, which we replace
by
\begin{eqnarray*} \lefteqn{
\E(o^{(\epsilon_1)}_{(k_1)j_1j_{-1}} \cdots
o^{(\epsilon_n)}_{(k_n)j_nj_{-n}}) } \\
& = &
\E(o_{(k_1)l_1l_{-1}}\cdots
o_{(k_n)l_n l_{-n}}) = 
\mathop{\sum_{p, q \in \cP_2(n)}}_{p,q \leq \ker(k)}
\Wg(\ker(k), p, q) \delta^l_{p \delta q\delta}.
\end{eqnarray*}
The remainder of the proof is unchanged. 
\end{proof}

\section{A Lemma on Spoke Diagrams}\label{section:spokediagrams}

At several points later on we shall wish to know that a
given permutation represents a spoke diagram (see Figure
\ref{figure:bothspokediagrams}). Lemma
\ref{lemma:standardspokediagram} identifies standard spoke
diagrams and Lemma \ref{lemma:reversedspokediagram}
identifies reversed spoke diagrams.

\begin{lemma}\label{lemma:preliminaryspokediagram}
Suppose $\gamma \in S_n$ is a permutation, $p \in \cP_2(n)$
a pairing, and $\epsilon \in \bZ_2^n$ an assignment of
signs, are such that $\pi_\pep$ is a pairing. Let $(r, s)
\in p$ be a pair of $p$.

\begin{enumerate}

\item 
If $\epsilon_r = -\epsilon_s$ then $(\gamma^{-1}(r),
\gamma(s)) \in p$, $(\gamma^{-1}(r), s) \in \pi_\pep$, and
$\epsilon_{\gamma^{-1}(r)} = -\epsilon_{\gamma(s)}$.

\item 
If $\epsilon_r = \epsilon_s$ then $(\gamma^{-1}(r),
\gamma^{-1}(s)) \in p$, $(\gamma^{-1}(r), \gamma^{-1}(s))
\in \pi_\pep$, and $\epsilon_{\gamma(r)} =
\epsilon_{\gamma(s)}$.

\end{enumerate}
\end{lemma}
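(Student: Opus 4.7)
The plan is to unpack the pairing structure of $\pep$ coming from the given pair $(r,s) \in p$, and then extract the stated consequences from the hypothesis that $\pi_\pep$ is a pairing.

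First I would compute which pairs of $\pep$ are contributed by $(r,s) \in p$. Since $p\delta p\delta$ is the pairing of $[\pm n]$ that pairs $k \leftrightarrow p(k)$ on positives and $-k \leftrightarrow -p(k)$ on negatives, and since conjugation by $\delta_\epsilon$ flips signs according to $\epsilon$ while conjugation by $(\gamma\delta)^{-1}$ acts as $k \mapsto -\gamma^{-1}(k)$ on positives and $-k \mapsto k$ on negatives, a direct calculation gives: in Case (i), i.e.\ $\epsilon_r = -\epsilon_s$, the pair $(r,s)$ contributes the $\pep$-pairs $(-\gamma^{-1}(r),s)$ and $(r,-\gamma^{-1}(s))$; in Case (ii), i.e.\ $\epsilon_r = \epsilon_s$, it contributes the $\pep$-pairs $(r,s)$ and $(-\gamma^{-1}(r),-\gamma^{-1}(s))$.

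Next I would interpret the hypothesis. Because $\pep$ and $\delta$ are both involutions, cycles of $\pep\delta$ come in $\delta$-conjugate pairs $\{c, \delta c^{-1}\delta\}$ by Lemma \ref{lemma:pairingproduct}, and $\pi_\pep$ is formed by taking absolute values of one representative from each such pair. For $\pi_\pep$ to be a pairing, every cycle of $\pep\delta$ must then have length $2$ with distinct absolute values; this is equivalent to $\pep$ commuting with $\delta$. The key structural consequence is that $(a,b)$ is a pair of $\pep$ if and only if $(-a,-b)$ is a pair of $\pep$.

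The heart of the argument is applying this $\delta$-symmetry to the $\pep$-pairs from the first step and identifying the pair of $p$ that produces each symmetric counterpart, together with reading off the relevant 2-cycle of $\pep\delta$. In Case (i), applying $\delta$-symmetry to $(-\gamma^{-1}(r),s) \in \pep$ yields $(\gamma^{-1}(r),-s) \in \pep$; this is a pair with one positive and one negative entry, so by the classification above it must come from a Case (i) pair of $p$, and matching forces $(\gamma^{-1}(r),\gamma(s)) \in p$ with $\epsilon_{\gamma^{-1}(r)} = -\epsilon_{\gamma(s)}$. The 2-cycle of $\pep\delta$ containing $s$ is $(s,\gamma^{-1}(r))$, so on passing to absolute values $(\gamma^{-1}(r),s) \in \pi_\pep$. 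In Case (ii), applying $\delta$-symmetry to $(-\gamma^{-1}(r),-\gamma^{-1}(s))$ gives the $\pep$-pair $(\gamma^{-1}(r),\gamma^{-1}(s))$, which can only arise from a Case (ii) pair of $p$ with those entries, so $(\gamma^{-1}(r),\gamma^{-1}(s)) \in p$; applying $\delta$-symmetry to $(r,s)$ gives $(-r,-s)$, tracing back to the Case (ii) pair $(\gamma(r),\gamma(s)) \in p$ and hence $\epsilon_{\gamma(r)} = \epsilon_{\gamma(s)}$; finally the 2-cycle of $\pep\delta$ through $-\gamma^{-1}(r)$ is $(-\gamma^{-1}(r),\gamma^{-1}(s))$, giving $(\gamma^{-1}(r),\gamma^{-1}(s)) \in \pi_\pep$. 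The main technical obstacle is the bookkeeping of which $\pep$-pair from the first step is the ``original'' one and which is its $\delta$-symmetric partner, and this is exactly what forces the asymmetric-looking mixture of $\gamma^{-1}$ and $\gamma$ in the conclusions of the two cases.
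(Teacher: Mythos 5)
Your proof is correct and takes essentially the same route as the paper's: both unwind the conjugation defining $\pep$, use that $\pi_\pep$ being a pairing forces $(\pep)\delta$ to be an involution (equivalently $\delta\, (\pep)\, \delta = \pep$), and exploit the sign-preservation of $p\delta p\delta$ to identify the originating pair of $p$. Your front-loaded classification of the $\pep$-pairs by sign pattern is a repackaging of the paper's pointwise computation; in case (\textit{ii}) it has the minor advantage of landing directly on the stated relation $\epsilon_{\gamma(r)}=\epsilon_{\gamma(s)}$ rather than the equivalent $\epsilon_{\gamma^{-1}(r)}=\epsilon_{\gamma^{-1}(s)}$ derived in the paper's proof.
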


\begin{proof} (\textit{i})
Let us suppose that $\epsilon_r = -\epsilon_s$. Since $(r,
s) \in p$ and $\epsilon_r = -\epsilon_s$ we have
\[
(r,-s), (-r,s) \in \delta_\epsilon p \delta p \delta
\delta_\epsilon.
\] 
Since $\pi_\pep$ is a pairing, $(\pep)\delta$ is also a
pairing --- recall that $\pep = (\gamma\delta)^{-1}
\delta_\epsilon p \delta p \delta \delta_\epsilon(\gamma
\delta)$. Also
\[
(p \mskip 2mu\cdot_\epsilon p)\delta(\gamma^{-1}(r))
=(\gamma\delta)^{-1} (\delta_\epsilon p \delta p \delta
\delta_\epsilon) (\gamma\delta) \delta (\gamma^{-1}(r)) = s.
\] 
Thus $(\gamma^{-1}(r), s) \in (p \mskip 2mu\cdot_\epsilon
p)\delta$, because $(\pep)\delta$ is a pairing. Since both
$\gamma^{-1}(r), s \in [n]$ we have that $(\gamma^{-1}(r),
s) \in \pi_\pep$. Moreover $(\gamma^{-1}(r),\ab s) \in
\pi_\pep$ and so $(p \mskip 2mu\cdot_\epsilon p)\delta(s) =
\gamma^{-1}(r)$. Unwinding this equation we have
\[
p\delta p \delta (\epsilon_{\gamma(s)} \gamma(s)) = -
\epsilon_{\gamma^{-1}(r)} \gamma^{-1}(r).
\] 
Since $p\delta p \delta$, as a permutation, doesn't change
the sign of its argument, we have $\epsilon_{\gamma(s)} =
-\epsilon_{\gamma^{-1}(r)}$. Thus $p\delta p \delta
(\gamma(s)) = \gamma^{-1}(r)$, and we are left with
$(\gamma^{-1}(r), \gamma(s))$ is a cycle of $p$,
$(\gamma^{-1}(r), s) \in \pi_\pep$, and
$\epsilon_{\gamma(s)} = -\epsilon_{\gamma^{-1}(r)}$ as
required.

(\textit{ii}) Let us suppose that $\epsilon_r =
\epsilon_s$. Since $(r, s) \in p$ and $\epsilon_r =
\epsilon_s$ we have
\[
(r,s), (-r,-s) \in \delta_\epsilon p \delta p \delta
\delta_\epsilon.
\] 
Since $\pi_\pep$ is also a pairing, $(\pep)\delta$ is a
pairing. Also
\[
(p \mskip 2mu\cdot_\epsilon p)\delta(\gamma^{-1}(r))
=(\gamma\delta)^{-1} (\delta_\epsilon p \delta p \delta
\delta_\epsilon) (\gamma\delta) \delta (\gamma^{-1}(r)) =
-\gamma^{-1}(s).
\] 
Thus $(\gamma^{-1}(r), -\gamma^{-1}(s))$ is a pair of
$(p \mskip 2mu\cdot_\epsilon p)\delta$. Thus
$(\gamma^{-1}(r), \gamma^{-1}(s)) \in \pi_\pep$. Moreover
$(p \mskip 2mu\cdot_\epsilon p)\delta(-\gamma^{-1}(s)) =
\gamma^{-1}(r)$. Unwinding the equation $(p \mskip
2mu\cdot_\epsilon p)\delta(-\gamma^{-1}(s)) =
\gamma^{-1}(r)$ we have
\[
p\delta p \delta (-\epsilon_{\gamma(s)} \gamma(s)) =
-\epsilon_{\gamma^{-1}(r)} \gamma^{-1}(r).
\] 
Since $p\delta p \delta$, as a permutation, doesn't change
the sign of its argument, we have $\epsilon_{\gamma^{-1}(r)}
= \epsilon_{\gamma^{-1}(s)}$. Thus $p\delta p \delta
(\gamma^{-1}(s)) = \gamma^{-1}(r)$, and we are left with
$(\gamma^{-1}(r), \gamma^{-1}(s))$ is a cycle of $p$,
$\epsilon_{\gamma^{-1}(r)} = \epsilon_{\gamma^{-1}(s)}$, and
$(\gamma^{-1}(r), \gamma^{-1}(s)) \in \pi_\pep$ as claimed.

\end{proof}

\begin{lemma}\label{lemma:standardspokediagram}
Let $\gamma$ be the permutation with the two cycles $(1,
\dots, m)\ab(m+1, \dots, m+n)$, let $\epsilon \in
\bZ_2^{m+n}$, and let $p\in \cP_2(m+n)$ be a pairing such
that
\begin{enumerate}
\item
$p \vee \gamma = 1_{m+n}$, i.e. at least one of cycle of $p$
  connects the two cycles of $\gamma$;

\item for some $(r, s) \in p$ we have $\epsilon_r = -
  \epsilon_s$; and

\item 
$\pi_\pep$ is a pairing.
\end{enumerate}
Then $m = n$, $p$ and $\pi_\pep$ are standard spoke
diagrams, and there is $l$, which we make take to be
$\gamma^{-r}(s)$ if we assume that $1 \leq r \leq m$, such
that

\begin{enumerate}

\item[\textit{a})] every cycle of $p$ is of the form $(k,
  \gamma^{-k}(l))$ for $1 \leq k \leq m$, and

\item[\textit{b})] every cycle of $\pi_\pep$ is of the form
  $(k, \gamma^{-k-1}(l))$ for $1 \leq k \leq m$,
 
\item[\textit{c})] 
 and $\epsilon_r = -\epsilon_s$ for all
$(r, s) \in p$,

\item[\textit{d})]  $\eta_k =1$ for all $k$.
\end{enumerate}
\end{lemma}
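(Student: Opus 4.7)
The plan is to use Lemma \ref{lemma:preliminaryspokediagram}(i) as an ``orbit engine'': starting from the single sign-opposite pair $(r,s)$ supplied by hypothesis~(2), iterating~(i) produces pairs $(\gamma^{-k}(r),\gamma^k(s)) \in p$ and $(\gamma^{-k-1}(r),\gamma^k(s)) \in \pi_\pep$ for every $k \geq 0$, all still with opposite signs. I will show this single orbit accounts for \emph{all} of $p$, and that this is only possible when $m=n$.

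First I would relabel so that $1 \leq r \leq m$ and rule out the case that $s$ also lies in the first cycle of $\gamma$. If it did, then every iterate would stay inside $\{1,\ldots,m\}$, and since $\gamma^{-k}(r)$ sweeps through that whole cycle the iterated pairs would exhaust $\{1,\ldots,m\}$; this would leave no element of the first cycle free to sit in a pair connecting the two cycles, contradicting condition~(1). So $s$ must lie in the second cycle of $\gamma$.

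The key step is then a periodicity/counting argument. With $r$ in the first cycle and $s$ in the second, the first coordinates of $(\gamma^{-k}(r),\gamma^k(s))$ have period $m$ and the second have period $n$, so the pair has period $\operatorname{lcm}(m,n)$, producing that many distinct pairs in one full period. Each element of the first cycle occurs as a first coordinate exactly $\operatorname{lcm}(m,n)/m$ times among these iterates; since every element of $[m+n]$ lies in exactly one pair of $p$, this multiplicity must equal $1$, forcing $n\mid m$. By the symmetric argument $m\mid n$, hence $m=n$. The $m$ iterates then cover all $2m = m+n$ points, so they are exactly all the pairs of $p$. Reparametrising by $j := \gamma^{-k}(r)$ and taking $l$ to be the appropriate power of $\gamma$ applied to $s$ yields the form $(j,\gamma^{-j}(l))$ of (a); (b) follows from the analogous reparametrisation of the $\pi_\pep$-pairs obtained from~(i), whose second coordinate is shifted by one $\gamma$-step relative to $p$; and (c) is immediate because the iteration preserves the sign-opposite property.

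Condition~(d) falls out of the construction of $\pi_\pep$ in Lemma \ref{lemma:pairingtopermutation}: the proof of Lemma \ref{lemma:preliminaryspokediagram}(i) exhibits each $\pi_\pep$-pair as a $2$-cycle of $(\pep)\delta$ with both entries positive (namely $(\gamma^{-k-1}(r),\gamma^k(s))$ itself), so choosing those positive-entry representatives from each pair $\{c,\delta c^{-1}\delta\}$ of cycles forces $\eta_j = +1$ throughout, and the orbit already covers $[m+n]$. I expect the main technical obstacle to be the period/counting argument that pins down $m=n$ and shows the single orbit exhausts $p$; once that is in place, properties (a)--(d) are essentially bookkeeping driven by repeated application of Lemma \ref{lemma:preliminaryspokediagram}(i).
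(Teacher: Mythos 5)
Your proposal is correct and follows essentially the same route as the paper: both use Lemma \ref{lemma:preliminaryspokediagram}(\textit{i}) iteratively to propagate the single sign-opposite pair around the two cycles of $\gamma$, conclude that this orbit exhausts $p$ (forcing $m=n$ and the spoke structure), and read off $\eta\equiv 1$ from the positive-entry cycle of $(\pep)\delta$ exhibited in that lemma's proof. Your treatment is in fact somewhat more careful than the paper's, which leaves implicit both the exclusion of the case where the sign-opposite pair lies inside a single cycle of $\gamma$ and the $\operatorname{lcm}$-counting that pins down $m=n$.
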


\begin{proof}
Let $(r, s) \in p$, i.e. $(r, s)$ is a cycle of $p$, and
suppose $\epsilon_r = -\epsilon_s$. By using induction on
Lemma \ref{lemma:preliminaryspokediagram} we know that for
all $k$, $(\gamma^{-k}(r), \gamma^k(s)) \in p$,
$\epsilon_{\gamma^{-k}(r)} = -\epsilon_{\gamma^k(s)}$, and
$(\gamma^{-k}(r), \gamma^{k-1}(s)) \in \pi_\pep$. Recall
that in the proof of Lemma
\ref{lemma:preliminaryspokediagram} (\textit{i}) we showed
that $(\gamma^{-1}(r), s) \in (\pep)\delta$. This implied
that $(\gamma^{-1}(r), s) \in \pi_\pep$ and that
$\eta_{\gamma^{-1}(r)} = \eta_s = 1$. By our induction
argument we have that $\eta_k = 1$ for all $k$.

By assumption, $p$ has at least one pair $(r, s)$ that
connects the cycles of $\gamma$; and so by what we have just
observed, all cycles of $p$ connect the two cycles of
$\gamma$. This implies $m = n$, and all cycles of $p$ are of
the form $(k, \gamma^{-k}(l))$, where $l = \gamma^{r-1}(s)$,
assuming $\gamma^{-r}(r) = m$. Moreover, both $p$ and
$\pi_\pep$ are spoke diagrams, i.e. non-crossing annular
pairings of an $(m,m)$-annulus with all pairs connecting the
two circles; see Figure~\ref{figure:bothspokediagrams}.
\end{proof}

\begin{lemma}\label{lemma:reversedspokediagram}
Let $\gamma$ be the permutation with the two cycles $(1,
\dots, m)\ab(m+1, \dots, m+n)$, let $\epsilon \in
\bZ_2^{m+n}$, and let $p\in \cP_2(m+n)$ be a pairing such
that
\begin{enumerate}
\item
$p \vee \gamma = 1_{m+n}$, i.e. at least one of cycle of $p$
  connects the two cycles of $\gamma$;

\item for some $(r, s) \in p$ we have $\epsilon_r =  \epsilon_s$; and

\item 
$\pi_\pep$ is a pairing.
\end{enumerate}
Then $m = n$, $p$ and $\pi_\pep$ are reversed spoke
diagrams, and there is $l$, which we make take to be
$\gamma^{-r}(s)$ if we assume that $1 \leq r \leq m$, such
that

\begin{enumerate}

\item[\textit{a})] every cycle of $p$ is of the form $(k,
  \gamma^{k}(l))$ for $1 \leq k \leq m$, and

\item[\textit{b})] every cycle of $\pi_\pep$ is of the form
  $(k, \gamma^k(l))$ for $1 \leq k \leq m$,
 
\item[\textit{c})] 
 and $\epsilon_r = \epsilon_s$ for all
$(r, s) \in p$,

\item[\textit{d})] $\eta_k = -1$ for all $k \in [m]$.
\end{enumerate}
\end{lemma}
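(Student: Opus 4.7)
The plan is to mimic the proof of Lemma \ref{lemma:standardspokediagram} almost verbatim, replacing every appeal to part (i) of Lemma \ref{lemma:preliminaryspokediagram} with an appeal to part (ii). Conceptually, the sign equality $\epsilon_r = \epsilon_s$ rather than $\epsilon_r = -\epsilon_s$ flips the iteration from the ``opposite-orientation'' orbit $(\gamma^{-k}(r),\gamma^{k}(s))$ of the standard case to the ``same-orientation'' orbit $(\gamma^{-k}(r),\gamma^{-k}(s))$, and this is precisely what produces a reversed spoke diagram.

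Concretely, starting from a pair $(r,s)\in p$ with $\epsilon_r=\epsilon_s$, induct on $k$ using Lemma \ref{lemma:preliminaryspokediagram}(ii) to show that for all $k\ge 0$,
\[
(\gamma^{-k}(r),\gamma^{-k}(s))\in p,\qquad \epsilon_{\gamma^{-k}(r)}=\epsilon_{\gamma^{-k}(s)},\qquad (\gamma^{-k}(r),\gamma^{-k}(s))\in\pi_\pep.
\]
The hypothesis $p\vee\gamma=1_{m+n}$ guarantees that at least one pair of $p$ connects the two cycles of $\gamma$; since the propagation above keeps the sign equality intact, every pair of $p$ inherits $\epsilon_r=\epsilon_s$, yielding (c). Taking $(r,s)$ to be a connecting pair with $r\in[m]$ and $s\in\{m+1,\dots,m+n\}$, the first coordinate orbits the first cycle of $\gamma$ with period $m$ and the second coordinate orbits the second cycle with period $n$. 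The orbit contributes $2\operatorname{lcm}(m,n)$ distinct points, which must fit inside the $m+n$ points of $[m+n]$; this forces $m=n$ and shows the orbit exhausts $p$. Setting $l=\gamma^{-r}(s)$, reindexing $k':=\gamma^{-k}(r)\in[m]$ rewrites each pair as $(k',\gamma^{k'}(l))$, proving (a), and the identical reindexing applied to $\pi_\pep$ gives (b).

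For (d), trace through the construction of $\eta$ in Lemma \ref{lemma:pairingtopermutation}. The computation in the proof of Lemma \ref{lemma:preliminaryspokediagram}(ii) showed that the orbit of $(\pep)\delta$ containing $\gamma^{-1}(r)$ is $(\gamma^{-1}(r),-\gamma^{-1}(s))$, with partner orbit $(-\gamma^{-1}(r),\gamma^{-1}(s))$. Unlike the standard case, where both entries of the orbit chosen in the proof of (i) were positive and therefore forced $\eta\equiv +1$, here the two entries of each orbit carry opposite signs. Selecting the representative $(-\gamma^{-1}(r),\gamma^{-1}(s))$ assigns $\eta_{\gamma^{-1}(r)}=-1$ on the first-circle entry and $\eta_{\gamma^{-1}(s)}=+1$ on the second-circle entry; the induction then propagates this to $\eta_k=-1$ for every $k\in[m]$. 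The remark following Lemma \ref{lemma:pairingtopermutation} permits this choice without altering any $\Tr_{(\pi,\eta)}$ value, so (d) holds.

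The main ``obstacle'' is not conceptual but bookkeeping: one must be careful in step~(d) to choose the orbit representative consistently so that the minus signs land on $[m]$ rather than on $\{m+1,\dots,2m\}$, since the mixed-sign orbits of the reversed case do not force a single choice the way the all-positive orbits do in the standard case. Once that choice is made, the identification of $p$ and $\pi_\pep$ as reversed spoke diagrams is immediate from (a) and (b) together with the defining cycle form $(k,\gamma^{k}(l))$ of a reversed spoke diagram given just before Figure \ref{figure:bothspokediagrams}.
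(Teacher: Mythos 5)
Your proof is correct and follows essentially the same route as the paper: induction on Lemma \ref{lemma:preliminaryspokediagram}(\textit{ii}) to propagate the pair $(\gamma^{-k}(r),\gamma^{-k}(s))$ around the annulus, then the connectivity hypothesis to force $m=n$ and exhaust $p$. You in fact supply two details the paper leaves implicit --- the $\operatorname{lcm}$ count showing the orbit of a connecting pair fills $[m+n]$, and the explicit choice of cycle representative of $(\pep)\delta$ that places the signs $\eta_k=-1$ on $[m]$ rather than on $\{m+1,\dots,2m\}$ --- both of which are consistent with the Remark following Lemma \ref{lemma:pairingtopermutation}.
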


\begin{proof}
Let $(r, s) \in p$, i.e. $(r, s)$ is a cycle of $p$, and
suppose $\epsilon_r = \epsilon_s$. By using induction on
Lemma \ref{lemma:preliminaryspokediagram} we know that for
all $k$, $(\gamma^{-k}(r), \gamma^{-k}(s)) \in p$,
$\epsilon_{\gamma^{-k}(r)} = \epsilon_{\gamma^{-k}(s)}$, and
$(\gamma^{-k}(r), \gamma^{-k}(s)) \in \pi_\pep$. Recall that
in the proof of Lemma \ref{lemma:preliminaryspokediagram}
(\textit{ii}) we showed that $(\gamma^{-1}(r),
-\gamma^{-1}(s)) \in (\pep)\delta$. This implied that
$(\gamma^{-1}(r), \gamma^{-1}(s)) \in \pi_\pep$ and that
$\eta_{\gamma^{-1}(r)} = -1$. By our induction argument we
have that $\eta_k = -1$ for all $k\in [m]$.

By assumption, $p$ has at least one pair $(r, s)$ that
connects the cycles of $\gamma$; and so by what we have just
observed, all cycles of $p$ connect the two cycles of
$\gamma$. This implies $m = n$, and all cycles of $p$ are of
the form $(k, \gamma^{k}(l))$, where $l = \gamma^{-r}(s)$,
assuming $\gamma^{-r}(r) = m$. Moreover, both $p$ and
$\pi_\pep$ are spoke diagrams, i.e. non-crossing annular
pairings of an $(m,m)$-annulus with all pairs connecting the
two circles; see Figure~\ref{figure:bothspokediagrams}.
\end{proof}

\begin{corollary}\label{corollary:maximalconectivity}
Let $\gamma \in S_n$ be a permutation, $p \in \cP_2(n)$ a
pairing, and $\epsilon \in \bZ_2^n$ an assignment of
signs. Suppose that $\pi_\pep$ is a pairing then each block
of $p \vee \gamma$ contains at most two cycles of $\gamma$.
\end{corollary}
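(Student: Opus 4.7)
The plan is to prove the slightly stronger statement that if a block of $p \vee \gamma$ meets two distinct $\gamma$-cycles $C_r$ and $C_s$, then it equals $C_r \cup C_s$; the corollary then follows immediately. The whole argument will be a straightforward iteration of Lemma \ref{lemma:preliminaryspokediagram}.

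First I would pick a block $B$ of $p \vee \gamma$. If every pair of $p$ touching $B$ lies entirely inside a single $\gamma$-cycle, then $B$ equals that cycle and the claim is trivial. Otherwise there is a pair $(r, s) \in p$ whose endpoints lie in two distinct $\gamma$-cycles, which I name $C_r$ and $C_s$. I would then apply Lemma \ref{lemma:preliminaryspokediagram}: if $\epsilon_r = -\epsilon_s$, case (\textit{i}) produces the pair $(\gamma^{-1}(r), \gamma(s)) \in p$, again with opposite signs on its endpoints; if $\epsilon_r = \epsilon_s$, case (\textit{ii}) produces $(\gamma^{-1}(r), \gamma^{-1}(s)) \in p$, again with equal signs on its endpoints. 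Because the sign-relation type is preserved, the same case of the lemma can be reapplied to the new pair, and iterating yields pairs $(\gamma^{-k}(r), \gamma^{\pm k}(s)) \in p$ for every $k \in \bZ$.

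Next I would observe that as $k$ runs over $\bZ$, the element $\gamma^{-k}(r)$ enumerates all of $C_r$, so every element of $C_r$ is paired in $p$ with some element of $C_s$. By the symmetric argument (starting from $(s, r)$ instead of $(r, s)$), every element of $C_s$ is paired with an element of $C_r$. Since $p$ is an honest pairing on $[n]$, these relations are only consistent if $|C_r| = |C_s|$, and they leave no element of $C_r \cup C_s$ available to be paired with anything outside $C_r \cup C_s$. Thus the block $B$ containing $C_r$ and $C_s$ is exactly $C_r \cup C_s$, and meets precisely two cycles of $\gamma$.

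I do not anticipate a serious obstacle; the whole argument is driven by Lemma \ref{lemma:preliminaryspokediagram}. The only careful point is the preservation of the sign-relation type under one iteration step, but this is exactly the last clause in each part of that lemma. Once that is in place, finiteness of $[n]$ forces the $\gamma$-orbits of $r$ and $s$ to close up in lockstep, yielding $B = C_r \cup C_s$ and hence the bound of at most two $\gamma$-cycles per block of $p \vee \gamma$.
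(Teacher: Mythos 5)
Your proof is correct and follows essentially the same route as the paper: the paper's own argument is a one-line appeal to the fact (established by iterating Lemma \ref{lemma:preliminaryspokediagram}, as in Lemmas \ref{lemma:standardspokediagram} and \ref{lemma:reversedspokediagram}) that once a pair of $p$ connects two cycles of $\gamma$, the whole pair of cycles closes up into a spoke diagram and hence forms a single block of $p\vee\gamma$. You simply carry out that iteration explicitly rather than citing the spoke-diagram lemmas.
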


\begin{proof}
We saw in Lemma \ref{lemma:preliminaryspokediagram} that
when $p$ connects a pair of cycles of $\gamma$ these two
cycles form a spoke diagram. So a block of $p \vee \gamma$
can contain at most two cycles of $\gamma$. 
\end{proof}


\section{Real Second Order Freeness}
\label{section:realsecondorderfreeness}

Let us recall the definition of real second order freeness
from Redelmeier \cite[\S 1]{r2}. We begin with the concept of a
real second order non-commutative probability space.

\begin{definition}
Let $\cA$ be an algebra over $\bC$ and with an
anti-auto\-mor\-phism of order 2 denoted by $a \mapsto
a^t$. Suppose that $\phi : \cA \rightarrow \bC$ is a tracial
state and $\phi_2 : \cA \times \cA \rightarrow \bC$ is a
bi-trace, i.e. $\phi_2$ is bilinear and tracial in each
entry. Moreover we assume that $\phi_2(1, a) = \phi(a, 1) =
0$, $\phi(a^t) = \phi(a)$ and $\phi_2(a^t, b) = \phi_2(a,
b^t) = \phi_2(a, b)$ for all $a, b \in \cA$.  Then $(\cA,
\phi, \phi_2, t)$ is a \textit{real second order
  non-commutative probability space}.

\end{definition}

\begin{notation}

Let unital subalgebras $\cA_1,\dots,\cA_r\subset\cA$ be given.

\begin{enumerate}

\item
We say that a tuple $(a_1,\dots,a_n)$ of elements from $\cA$
is \emph{cyclically alternating} if, for each $i$, there is
$j_i \in \{1, \dots, r\}$ such that $a_i \in \cA_{j_i}$ and,
if $n\geq 2$, we have $j_k \not= j_{k+1}$ for all $k=1,
\dots, n$. We count indices in a cyclic way modulo $n$,
i.e., for $k=n$ the equation above means $j_n \not= j_1$.

\item
We say that a tuple $(a_1,\dots,a_n)$ of elements from $\cA$
is \emph{centred} if we have
$$\ff(a_i)=0\qquad\text{for all $i=1,\dots,n$.}$$
\end{enumerate}

\end{notation}

\begin{definition}\label{definition:secondorderfreeness}
Let $(\cA, \phi, \phi_2, t)$ be a real second order
non-commuta\-tive probability space and suppose that we have
unital subalgebras $\cA_1, \dots, \ab\cA_n$ that are
invariant under $a \mapsto a^t$. We say that $\cA_1, \dots,
\cA_n$ are real free of second order if (see figure \ref{figure:twospokes})
\begin{enumerate}

\item
the subalgebras $\cA_1, \dots, \cA_n$ are free with respect
to $\phi$;

\item for every $a_1, \dots, a_m \in \cA$ and $b_1, \dots,
  b_n \in \cA$ such that $(a_1, \dots,\ab a_m) $ and $(b_1,
  \dots, b_n)$ are centred and cyclically alternating, we
  have
\begin{enumerate}
\item[{\it a})] $\phi_2(a_1 \cdots a_m, b_1 \cdots b_n) =
  0$, if $m \not = n$ or if $m = n = 1$ and $a_1$ and $b_1$ are from different subalgebras;

\item[{\it b})] for $m = n> 1$ we have, taking all indices
  modulo $n$
\begin{equation}\label{equation:secondorderdefinition}
\phi_2(a_1 \cdots a_n, b_1 \cdots b_n) = \sum_{k=1}^n
\prod_{i=1}^n \bigg(\phi(a_i b_{k-i}) + \phi(a_i b_{i-k}^t)
\bigg).
\end{equation}

\end{enumerate}
\end{enumerate}
\end{definition}

\begin{figure}
\hfill\includegraphics{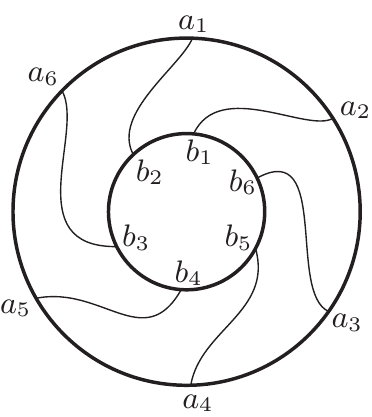}\hfill\includegraphics{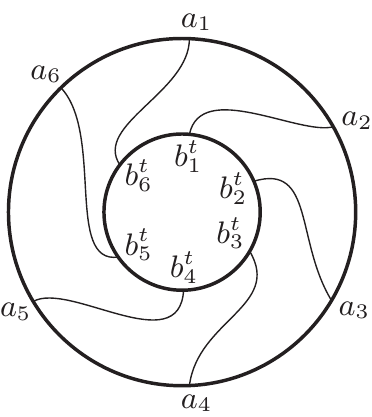}\hfill\hbox{}
\caption{\label{figure:twospokes} The terms on the right
  hand side of equation
  (\ref{equation:secondorderdefinition}) are sums over all
  spoke diagrams. In the diagram on the left the circles
  have the opposite orientation; we put the $a$'s on on
  circle and the $b$'s on the other. This gives the first
  term on the right hand side of
  (\ref{equation:secondorderdefinition}). In the circle on
  the right the two circles have the same orientation and we
  put `$b^t$'s on the inside circle. This gives the second
  term on the right hand side of
  (\ref{equation:secondorderdefinition}).  }
\end{figure}

\begin{notation}
Let $p \in \bC[x_1, \dots, x_s, x_1^t, \dots, x_s^t]$ be a
polynomial in the non-commuting variables $\{x_1, \dots,
x_s, x_1^t, \dots, x_s^t\}$ and $A_1, \dots , A_s$ be $d
\times d$ matrices. By $p(A_1, \dots, A_s)$ we mean the
matrix obtained by replacing $x_i$ by $A_i$ and $x_i^t$ by
$A_i^t$ in $p$. Similarly if $(\cA, \phi, \phi_2, t)$ is a
real second order non-commutative probability space then by
$p(a_1, \dots, a_s)$ we mean the random variable in $\cA$
obtained by replacing $x_i$ by $a_i$ and $x_i^t$ by $a_i^t$.

\end{notation}

\begin{remark}
Expanding on the notation in equation
(\ref{equation:permutation}) we define, for a permutation
$\pi \in S_n$ and $a_1, \dots, a_n \in \cA$, $\phi_\pi(A_1,
\dots, a_n)$ as below.
\[
\phi_\pi(a_1, \dots, a_n) = \mathop{\prod_{c \in \pi}}_{c =
  (i_1, \dots, i_k)} \phi(a_{i_1} \cdots a_{i_k}),\] where
the product is over all cycles $c$ of $\pi$ and for each
cycle $c = (i_1, \dots, i_k)$ we get the factor
$\phi(a_{i_1}, \dots, a_{i_k})$. This makes $\phi_\pi$ a
$n$-linear functional.

With this notation we can write equation
(\ref{equation:secondorderdefinition}) in a simpler way:
\begin{multline*}
\phi_2(a_1 \cdots a_n, b_1 \cdots b_n) \\ = \sum_{\pi \in
  \spoke{n}} \phi_\pi(a_1, \dots, a_n,
b_1, \dots, b_n) \\ \mbox{} + \sum_{\pi \in
  \rspoke{n}} \phi_\pi(a_1, \dots, a_n, b_1^t,
\dots, b_n^t),
\end{multline*}
where, recall, $\spoke{n}$ denotes the set of standard spoke
diagrams and $\rspoke{n}$ denotes the set of standard spoke
diagrams.
\end{remark}

We shall need to use the associativity of real second order
freeness. Let us recall how this works in the first order
case \cite{vdn}. Suppose that we have unital subalgebras
$\cA_1, \dots, \cA_s \subset \cA$ which are free with
respect to $\phi$. Moreover that for each $1 \leq i \leq s$
we have unital subalgebras $\cB_{i,1}, \dots, \cB_{i, t_i}
\subset \cA_i$ which are free with respect to $\phi$. Then
by \cite[Prop. 2.5.5 (\textit{iii})]{vdn} the subalgebras
$\cB_{1,1}, \dots \cB_{s, t_s} \subset \cA$ are free with
respect to $\phi$.  We shall prove the real second order
version of this. In \cite[Remark 2.7]{mss} the second order
version of \cite{vdn} was left as an exercise for the
reader, now we shall provide a solution. We begin with a
lemma.

\begin{lemma}\label{lemma:linearspoke}
Let $\cA_1, \dots, \cA_s \subset \cA$ be unital subalgebras
which are free with respect to $\phi$. Suppose that $a_1,
\dots, a_m, b_1, \dots, b_n \in \cA$ are such that
\begin{itemize}

\item 
$\phi(a_i) = \phi(b_j) = 0$ for all $i$ and $j$;

\item
$a_i \in \cA_{k_i}$ and $k_1 \not= k_2 \not= \cdots \not= k_m$;

\item
$b_j \in \cA_{l_j}$ and $l_1 \not= l_2 \not= \cdots \not= l_n$.
\end{itemize}
Then for $m \not = n$, $\phi(a_1\cdots a_mb_n \cdots b_1) =
0$ and for $m = n$
\[
\phi(a_1 \cdots a_mb_m\cdots b_1) = \prod_{i=1}^m \phi(a_ib_i).
\]

\end{lemma}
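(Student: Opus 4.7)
The plan is to induct on $N = m+n$. The base cases $N=0,1$ are immediate: the empty product gives $\phi(1)=1$, and a single centred element has $\phi=0$, matching the desired formulas.

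For the inductive step with $m,n\geq 1$ (the cases where one of $m,n$ vanishes follow directly from freeness since the remaining sequence is alternating and centred), I would split according to whether $k_m = l_n$. If $k_m \neq l_n$, then the tuple $(a_1,\dots,a_m,b_n,\dots,b_1)$ is alternating and centred, and so by the definition of first-order freeness $\phi(a_1\cdots a_m b_n\cdots b_1)=0$. This matches the claim when $m\neq n$; when $m=n$ (so $l_m=l_n\neq k_m$) it also matches because $\phi(a_m b_m)=\phi(a_m)\phi(b_m)=0$, forcing the product $\prod_{i=1}^m \phi(a_ib_i)$ to vanish.

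The main case is $k_m = l_n$. Here the standard trick is to centre the middle product: set $c := a_m b_n - \phi(a_m b_n)\cdot 1 \in \cA_{k_m}$, which is centred. Expanding gives
\[
\phi(a_1\cdots a_m b_n\cdots b_1) = \phi(a_1\cdots a_{m-1}\, c\, b_{n-1}\cdots b_1) + \phi(a_m b_n)\,\phi(a_1\cdots a_{m-1} b_{n-1}\cdots b_1).
\]
The first term is $\phi$ evaluated on a centred alternating tuple of length $m+n-1$, since $k_{m-1}\neq k_m = l_n \neq l_{n-1}$ (using the hypotheses on the $a$'s and $b$'s together with $k_m=l_n$). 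Thus it vanishes by first-order freeness.

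The second term is handled by the inductive hypothesis applied to $a_1,\dots,a_{m-1}$ and $b_1,\dots,b_{n-1}$, a case of total length $N-2 < N$. If $m\neq n$, then $m-1 \neq n-1$ and the induction gives $0$, making the whole expression $0$. If $m=n$, the induction gives $\prod_{i=1}^{m-1}\phi(a_i b_i)$, and multiplying by $\phi(a_m b_n)=\phi(a_m b_m)$ produces $\prod_{i=1}^m \phi(a_i b_i)$, as required. The only real subtlety is the bookkeeping of the alternation condition at the junction when $c$ replaces $a_m b_n$, which is precisely where the equality $k_m=l_n$ is needed to keep the reduced tuple alternating; once this is in place the argument is routine.
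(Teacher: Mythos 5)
Your proof is correct and follows essentially the same route as the paper: in both arguments the key step is to centre $a_mb_n$ when $k_m=l_n$, kill the centred term by first-order freeness of the alternating tuple, and peel off the factor $\phi(a_mb_n)$ by induction, with the $m\neq n$ case terminating in a vanishing alternating product of the leftover $b$'s (or $a$'s). The only cosmetic difference is that the paper states the recursion $\phi(a_1\cdots a_mb_n\cdots b_1)=\phi(a_mb_n)\,\phi(a_1\cdots a_{m-1}b_{n-1}\cdots b_1)$ uniformly (both sides being zero when $k_m\neq l_n$) rather than splitting into cases.
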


\begin{proof}
Let us begin by showing that
\[
\phi(a_1 \cdots a_m b_n \cdots b_1) =
\phi(a_mb_n) \phi(a_1 \cdots a_{m-1} b_{n-1} \cdots b_1).
\]
First suppose that $k_m \not= l_n$. Then both $\phi(a_1
\cdots a_m b_n \cdots b_1)$ and \ab $\phi(a_mb_n)$ are 0 by
freeness. Thus both sides of the equation above are 0. Next
suppose that $k_m = l_n$ and write $a_mb_n = (a_mb_n)^\circ
+ \phi(a_mb_n)$. Then $\phi(a_1 \cdots a_{m-1}
(a_mb_n)^\circ b_{n-1} \cdots b_1) = 0$ because $k_{m-1}
\not= k_m = l_n \not= l_{n-1}$. Thus
\begin{multline*}
\phi(a_1 \cdots a_nb_1 \cdots b_1) \\
=
\phi(a_1 \cdots a_{m-1} (a_mb_n)^\circ b_{n-1} \cdots b_1) \\
\qquad\mbox{}+
\phi(a_mb_n) \phi(a_1 \cdots a_{m-1} b_{n-1} \cdots b_1) \\
=
\phi(a_mb_n) \phi(a_1 \cdots a_{m-1} b_{n-1} \cdots b_1).
\end{multline*}

Now we conclude by induction. If $m=n$ we get the formula we
claimed. If $m < n$ then
\[
\phi(a_1 \cdots a_m b_n \cdots b_1) =
\phi(a_mb_n) \cdots \phi(a_mb_{n-m+1})
\phi(b_{n-m} \cdots b_1) = 0
\]
by the freeness of the $b_j$'s. The case when $m > n$ is
exactly the same.
\end{proof}

\begin{proposition}\label{proposition:associativity}
Let $\cA_1, \dots, \cA_s \subset \cA$ be $t$-invariant
unital subalgebras of $\cA$ which are real second order free
with respect to $(\phi, \phi_2)$. For each $1 \leq i \leq s$
suppose we have $t$-invariant unital subalgebras $\cB_{i,1}
\dots, \cB_{i, t_i} \subset \cA_i$ which are real free of
second order with respect to $(\phi, \phi_2)$. Then the
subalgebras $\cB_{1,1}, \dots, \cB_{S, t_s} \subset \cA$ are
real free of second order with respect to $(\phi, \phi_2)$.

\end{proposition}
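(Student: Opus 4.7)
The first condition of Definition \ref{definition:secondorderfreeness} --- that the $\cB_{i,j}$ are free at first order --- is the classical associativity of free independence \cite[Prop.~2.5.5(iii)]{vdn}. The substance of the proposition lies in the second order condition, which I propose to obtain by a two-level reduction: group consecutive inputs by their $\cA$-index, apply the assumed real second order freeness of $\cA_1,\dots,\cA_s$ at the coarse level, and then unpack each resulting factor via the inner first order freeness together with Lemma \ref{lemma:linearspoke}.

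Fix centred cyclically alternating tuples $(c_1,\dots,c_m)$ and $(d_1,\dots,d_n)$ drawn from the various $\cB_{i,j}$. Using the traciality of $\phi_2$ in each argument I cyclically rotate the $c$'s (and separately the $d$'s) so that maximal runs of consecutive elements lying in a common $\cA_i$ become contiguous, and factor $c_1\cdots c_m = A_1\cdots A_M$ with $A_k\in\cA_{I_k}$ and $I_1,\dots,I_M$ cyclically alternating, and similarly $d_1\cdots d_n = B_1\cdots B_N$ with $B_l\in\cA_{J_l}$. Inside each run the factors are centred and alternating among the free subalgebras $\cB_{I_k,1},\dots,\cB_{I_k,t_{I_k}}$ of $\cA_{I_k}$, so first order freeness yields $\phi(A_k)=0$ and likewise $\phi(B_l)=0$. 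Thus $(A_1,\dots,A_M)$ and $(B_1,\dots,B_N)$ meet the hypotheses of (\ref{equation:secondorderdefinition}) at the $\cA$-level, and we may expand $\phi_2(A_1\cdots A_M, B_1\cdots B_N)$ accordingly.

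Each coarse-level term is a product of factors of the form $\phi(A_k B_l)$ or $\phi(A_k B_l^t)$. When $I_k\neq J_l$ such a factor vanishes by first order freeness of the $\cA_i$ together with the centeredness of $A_k$ and $B_l$; when $I_k=J_l$, Lemma \ref{lemma:linearspoke} applied inside $\cA_{I_k}$ --- using the $t$-invariance of the $\cB_{i,j}$ in the reversed case --- shows the factor is zero unless $|A_k|=|B_l|$, and otherwise factorises as a product of $\phi(c_? d_?)$ (respectively $\phi(c_? d_?^t)$) pairing the constituents of the two runs in reverse (respectively same) order. The degenerate configuration $M=N=1$ with $I_1=J_1$, in which all of $c_1,\dots,c_m,d_1,\dots,d_n$ lie in a single $\cA_{I_1}$, is handled directly by invoking (\ref{equation:secondorderdefinition}) for the $\cB_{I_1,*}$ inside $\cA_{I_1}$; the singleton case $m=n=1$ splits analogously according to whether the two $\cA$-indices coincide.

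The main obstacle is the combinatorial bookkeeping for $m=n\geq 2$: identifying the resulting expression with the right-hand side of (\ref{equation:secondorderdefinition}) at the $\cB$-level. I propose to establish a bijection between (standard or reversed) spoke diagrams on the fine annulus $(c_1,\dots,c_n,d_1,\dots,d_n)$ that contribute a nonzero product and pairs $(\pi,\rho)$, where $\pi$ is a (standard or reversed) spoke diagram on the coarse annulus $(A_1,\dots,A_M,B_1,\dots,B_M)$ whose paired runs $A_k,B_l$ always have matching lengths, and $\rho$ is the forced reverse-order (respectively same-order) element-matching inside each such paired block. The refinement direction is straightforward since the parallel spokes within contiguous blocks assemble into a single cyclic shift of the fine annulus, preserving non-crossing; for the converse, a nonzero fine contribution requires every pair $(c_i,d_j)$ to lie in a common $\cA_\ell$, and the non-crossing condition then forces the diagram to respect the $\cA$-grouping and hence to arise as such a refinement. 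Summing factor by factor reproduces the right-hand side of (\ref{equation:secondorderdefinition}) on $(c_1,\dots,c_n,d_1,\dots,d_n)$, completing the reduction.
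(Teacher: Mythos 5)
Your proposal follows essentially the same route as the paper's proof: group consecutive elements into runs $A_k$, $B_l$ by their $\cA$-index, note these are centred by inner first order freeness, apply the coarse second order formula (\ref{equation:secondorderdefinition}) to $\phi_2(A_1\cdots A_M, B_1\cdots B_N)$, and refine each coarse spoke factor $\phi(A_kB_l)$ or $\phi(A_kB_l^t)$ via Lemma \ref{lemma:linearspoke}, observing that the fine spoke diagrams not arising this way contribute zero by first order freeness. The argument is correct and matches the paper's, with only cosmetic differences (you make the cyclic rotation and the degenerate cases slightly more explicit).
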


\begin{proof}
The proof of first order freeness is as in \cite[Prop. 2.5.5
  (\textit{iii})]{vdn}. So let us prove part (\textit{ii})
of Definition \ref{definition:secondorderfreeness}. Let
$a_1, \dots, a_m, b_1, \dots, b_n \in \cA$ be such that
\begin{itemize}
\item $\phi(a_i) = \phi(b_j) = 0$ for all $i$ and $j$; and

\item $a_i \in \cB_{k_i, u_i}$ and $(k_1, u_1) \not= (k_2,
  u_2) \not= \cdots \not= (k_m, u_m) \not= (k_1, u_1)$; and

\item
$b_j \in \cB_{l_j, v_j}$ and $(l_1, v_1) \not= (l_2, v_2)
  \not= \cdots \not= (l_n, v_n) \not= (l_1, v_1)$.
\end{itemize}
We must show that for $m = n \geq 2$
\begin{multline}\label{equation:spokesecondorder}
\phi_2(a_1 \cdots a_m, b_1 \cdots b_m) \\ 
= \sum_{\pi \in \spoke{m}} \phi_\pi(a_1 \cdots a_m, b_1
\cdots b_m)\\ +\sum_{\pi \in \rspoke{m}} \phi_\pi(a_1 \cdots
a_m, b_1^t \cdots b_m^t)
\end{multline}
and is 0 for $m \not= n$; the case $m = n = 1$ is immediate. 

Note that adjacent $a_i$'s are, by assumption, from
different $\cB_{k,v}$'s but might be from the same
$\cA_d$. So we group the $a_i$'s according to which $\cA_d$
contains them.  Let $m_1, \dots, m_p$ be positive integers
such that $m_1 + \cdots + m_p = m$ and $a_{m_1 + \cdots +
  m_{i-1}+1}, \dots , a_{m_1 + \cdots + m_i} \in \cA_{d_i}$,
for $1 \leq i \leq p$ and $d_1 \not= d_2 \not= \cdots \not=
d_p \not= d_1$. Then we let $A_i = a_{m_1 + \cdots +
  m_{i-1}+1} \cdots a_{m_1 + \cdots + m_i} \in
\cA_{d_i}$. Then $a_1 \cdots a_m = A_1 \cdots A_p$.

We do exactly the same for the $b_j$'s. Namely we let $n_1,
\dots, n_q$ be positive integers such that $n_1 + \cdots +
n_q = n$ and $b_{n_1 + \cdots n_{j-1}+1}, \dots , b_{n_1 +
  \cdots + n_j} \in \cA_{e_j}$ for $1 \leq j \leq q$ and
$e_1 \not= e_2 \not= \cdots \not= e_q \not= e_1$. We let
$B_j = b_{n+1 + \cdots + b_{j-1}+1} \cdots b_{n_1 + \cdots
  n_j} \in \cA_{e_i}$. Then $b_1 \cdots b_n = B_1 \cdots
B_q$.

Note that by first order freeness 
\[
\phi(A_i) = \phi(a_{m_1 + \cdots + m_{i-1}+1} \cdots a_{m_1
  + \cdots + m_i}) = 0,
\]
since the $\cB_{i,j}$'s are first order free by
\cite[Prop. 2.5.5 (\textit{iii})]{vdn}. Likewise $\phi(B_j)
= 0$.

If $p = q = 1$ then we have
(\ref{equation:spokesecondorder}) by the assumed second
order freeness of $\cB_{1,1}, \dots , \cB_{1,t_1}$. If $p
\not= q$, then by the assumed second order freeness of
$\cA_1, \dots, \cA_s$ we have $\phi_2(a_1\cdots a_m,
b_1\cdots b_n) = 0$, thus the left hand side of
(\ref{equation:spokesecondorder}) is 0.

Let us consider the right hand side of
(\ref{equation:spokesecondorder}). If $m \not= n$ then the
right hand side is 0. So let us suppose that $m = n$. Let us
first consider the term involving $\spoke{m}$. For $\pi \in
\spoke{m}$ and $\phi_\pi(a_1, \dots, a_m, b_1, \dots, b_n)\ab
\not = 0$ we must have $(k_i,i_i) = (l_j, v_j)$ for all
$(i,j) \in \pi$. This means $\pi$ gives a bijection between
the $\cA_i$'s which contain the $a_i$'s and the $\cA_j$'s
which contain the $b_j$'s. So in particular $p = q$, which
is impossible. Likewise if $\pi \in \rspoke{m}$ then we have
a bijection between the $\cA_i$'s containing the $a_i$'s and
the $\cA_j$'s containing the $b_j^t$'s. So again we would
have $p = q$.

Now let us suppose that $p = q \geq 2$. By the assumed real
second order freeness of $\cA_1, \dots, \cA_s$ we have
\begin{multline}\label{equation:coarsesecond}
\phi(a_1 \cdots a_m, b_1 \cdots b_n) 
=
\phi_2(A_1 \cdots A_p, B_1 \cdots B_p)\\
=
\sum_{\pi\in\spoke{p}} \phi_\pi(A_1, \dots, A_p, B_1, \dots, B_p) \\
+
\sum_{\pi\in\rspoke{p}} \phi_\pi(A_1, \dots, A_p, B_1^t, \dots, B_p^t).
\end{multline}
For $\pi \in \spoke{p}$ and $(i, j) \in \pi$ we have by
Lemma \ref{lemma:linearspoke}, when $m_i = n_j$
\[
\phi(A_iB_j) 
=
\phi(a_{m_1 + \cdots m_{i-1}+1} b_{n_1 + \cdots + n_j}) \cdots
\phi(a_{m_1 + \cdots m_i} b_{n_1 + \cdots + n_{j-1}+1})
\]
and 0 when $m_i \not = n_j$. Thus for this $\pi$, assuming
$m_i = n_j$ for all $(i, j) \in \pi$, we have
\[
\phi_\pi(A_1, \dots, A_p, B_1, \dots B_p) =
\phi_{\tilde\pi}(a_1, \dots, a_m, b_1, \dots, b_m)
\]
where $\tilde\pi \in \spoke{m}$ is the spoke diagram
obtained by matching up $m_1 + \cdots + m_{i-1} + k$ with
$n_1 + \cdots n_j - k+1$.

For $\pi \in \rspoke{p}$ and $(i, j) \in \pi$ we have by
Lemma \ref{lemma:linearspoke}, when $m_i = n_j$
\[
\phi(A_iB_j^t) 
=
\phi(a_{m_1 + \cdots m_{i-1}+1} b_{n_1 + \cdots + n_{j-1}+1}^t) \cdots
\phi(a_{m_1 + \cdots m_i} b_{n_1 + \cdots + n_j})
\]
and 0 when $m_i \not = n_j$.  Thus for this $\pi$, assuming
$m_i = n_j$ for all $(i, j) \in \pi$, we have
\[
\phi_\pi(A_1, \dots, A_p, B_1^t, \dots B_p^t) =
\phi_{\tilde\pi}(a_1, \dots, a_m, b_1^t, \dots, b_m^t)
\]
where $\tilde\pi \in \spoke{m}$ is the spoke diagram
obtained by matching up $m_1 + \cdots + m_{i-1} + k$ with
$n_1 + \cdots n_{j-1} + k$.

If we let $\pi$ run over $\spoke{m}$ on the right hand side
of (\ref{equation:coarsesecond}), the corresponding
$\tilde\pi$'s will not exhaust all $\pi$'s on the right hand
side of (\ref{equation:spokesecondorder}), but the ones that
are missed are such that $\phi_\pi(a_1, \dots, a_m, b_1,
\dots, b_m) = 0$, by the first order freeness of the
$\cA_i$'s. Similarly for the $\pi$'s in $\rspoke{m}$ on the
right hand side of (\ref{equation:coarsesecond}). We thus
have
\begin{multline*}
\sum_{\pi\in\spoke{p}} \phi_\pi(A_1, \dots, A_p, B_1, \dots, B_p) \\
+
\sum_{\pi\in\rspoke{p}} \phi_\pi(A_1, \dots, A_p, B_1^t, \dots, B_p^t) \\
=
\sum_{\pi \in \spoke{m}} \phi_\pi(a_1 \cdots a_m, b_1 \cdots b_m) \\
+\sum_{\pi \in \rspoke{m}} 
\phi_\pi(a_1 \cdots a_m, b_1^t \cdots b_m^t).
\end{multline*}
This combined with (\ref{equation:coarsesecond}) proves
(\ref{equation:spokesecondorder}).
\end{proof}

\begin{definition}\label{definition:secondorderlimiting}
Suppose for each $d$ we have random matrices $\{A_{d,1},
\dots,\ab A_{d,s}\}$. We say that the ensemble has a
\textit{real second order limit distribution} if there is a
real second order non-commutative probability space $(\cA,
\phi, \phi_2, t)$ and $a_1, \dots, a_s \in \cA$ such that
for all polynomials $p_1, p_2, p_3, \dots $ in the
non-commuting variables $\{x_1, \dots, x_s, x_1^t, \dots,
x_s^t\}$ we have
\begin{enumerate}
\item $
\lim_{d \rightarrow \infty} \E(\tr(p_1(A_{d,1}, \dots,
  A_{d,s})))
  = \phi(p_1(a_1,  \dots, a_s))$; 
  
\item  \begin{minipage}{4in}
\begin{multline*}
\ds \lim_{d \rightarrow \infty} \cov( \Tr(p_1(A_{d,1}, 
  \dots, A_{d,s})), \\
  \Tr(p_2(A_{d,1}, \dots, A_{d, s})))\\
  = 
  \phi_2( p_1(a_1, \dots, a_s), p_2(a_1, \dots, a_s)) 
  \end{multline*}\end{minipage}
 
\item
for all $r \geq 3$

\[
\lim_{d\rightarrow \infty} \rk_r(\Tr(p_1(A_{d,1},
\dots, A_{d,s})), \dots, 
\Tr(p_r(A_{d,1}, A_{d,1}^t, \dots, A_{d,s}))) = 0.
\]
\end{enumerate}

\end{definition}

\begin{remark}
The third condition is only needed to ensure the convergence
of fluctuations of mixed moments. In fact boundedness would
be enough. For many ensembles of matrices the $r^{th}$
cumulant vanishes on the order of $d^{2-r}$, for example the
ensembles discussed in \cite{r1,r2}. For deterministic
matrices the higher cumulants of traces are 0. Moreover a
close reading of our proof shows that if one starts with an
ensemble $\{A_i\}_i$with $\rk_r$ between $\lo(1)$ and
$\rO(1)$ for $r \geq 3$, the mixed cumulants of $A$'s and
$O$'s for $r \geq 3$ would have the same order as
$\{A_i\}_i$.

\end{remark}

\begin{remark}
Suppose we have for each $d$, random matrices $\{A_{d,1},\ab
\dots,\ab A_{d,s}\}$, a non-commutative probability space
$(\cA, \phi)$, and $a_1, \dots, a_n \in \cA$ such that for
every polynomial $p$ in the non-commuting variables $x_1,
\dots, x_s, x_1^t, \dots, x_s^t$ we have
\[
\lim_{d \rightarrow \infty} \tr(p(A_{d,1}, \dots, A_{d, n}))
= \phi(p(a_1, \dots, a_n))
\]
then we say that the matrices $\{A_{d,1}, \dots, A_{d,n}\}$
have the \textit{limit joint $t$-distribution} given by
$a_1, \dots, a_n$.
\end{remark}

\begin{definition}\label{definition:asymptoticrealsecond}
Let $\{A_{d,1}, \dots, A_{d,r}\}_d$ and $\{B_{d,1}. \dots,
B_{d,s}\}_d$ be two ensembles of random matrices such that
$\{A_{d,1}, \dots, A_{d,r}, B_{d,1}. \dots, B_{d,s}\}_d$ has
a real second order limit distribution given by $\{a_1,
\dots, a_s, b_1, \dots,\ab b_s\}$ in the real second order
non-commutative probability space $(\cA, \phi,\ab \phi_2,\ab
t)$. If the two unital subalgebras $\cA_1 = \alg(1, a_1,
\dots, a_r, a_1^t, \dots, a_r^t)$ and $\cA_2 = \alg(1, b_1,
\dots, b_s, b_1^t, \dots, b_s^t)$ are real free of second
order then we say that the two ensembles $\{A_{d,1}, \dots,
A_{d,r}\}_d$ and $\{B_{d,1}. \dots, B_{d,s}\}_d$ are
\textit{asymptotically real free of second order}.

\end{definition}


\section{First Order Freeness of Haar Orthogonal \\ 
         and Independent Matrices}\label{section:firstorderfreenes}

To show that a family of $d \times d$ random matrices $\{
A_{1}, \dots, A_{s}\}_d$ and an independent family of
orthogonal matrices $\{O_d\}_d$ are asymptotically real free
of second order, we must first demonstrate that they are
asymptotically free of first order, or asymptotically free
in the sense of Voiculescu \cite[\S 2.5]{vdn}.

For this we must show that given polynomials $\{p_1,\ab
\dots, \ab p_n\}$ in $O$ and $O^{-1}$ such that
$\E(\tr(p_i(O, O^{-1}))) = 0$ and random matrices
$\{A_{1}, \dots,\ab A_{s}\}$ with $\E(\tr(A_{i})) =
0$, then
\[
\lim_{d \rightarrow \infty}
\E(\tr(p_1(O, O^{-1}) A_1 \cdots p_n(O, O^{-1}) A_s))
= 0
\]
provided that the entries of the $A_{d,i}$'s are independent
from those of the $O$'s and the $\{A_{d,1}, \dots,
A_{d,n}\}$ have a real second order limit distribution. For
this it suffices to prove that
\[
\lim_{d \rightarrow \infty}
\E(\tr(O^{m_1} A_{1} \cdots O^{m_n} A_{s}))
= 0
\]
for any sequence of non-zero integers $m_1, \dots, m_n$ and
$\{A_{1}, \dots,\ab A_{s}\}$ as above.

\begin{notation}\label{notation:cumulantoftrace}
Let $\pi \in S_n$ be a permutation and $\cU \in \cP(n)$ be a
partition such that each cycle of $\pi$ lies in some block
of $\cU$. We denote this relation by $\pi \leq \cU$. Let
$A_1, \dots, A_n$ be $d \times d$ random matrices and write,
as in equation (\ref{equation:permutation}),
\[
\Tr_\pi(A_1, \dots, A_n) = \sum_{i_1, \dots, i_n}^d
a^{(1)}_{i_1i_{\pi(1)}} \cdots a^{(n)}_{i_ni_{\pi(n)}}
\]

Let the blocks of $\cU$ be $\{U_1, \dots, U_k\}$ and let
$\pi_i$ be the product of cycles of $\pi$ that lie in
$U_i$. If $c = (i_1, \dots, i_r)$ is a cycle of $\pi$, let
$\Tr_c(A_1, \dots, A_n) = \Tr(A_{i_1} \cdots A_{i_r})$. If
$\pi_i = c_1 \cdots c_k$, as a product of cycles, let
$\Tr_{\pi_i}(A_1, \dots, A_n) = \prod_i \Tr_{c_i}(A_1,
\dots, A_n)$. Next let
\begin{equation}\label{equation:partitionnotation}
\E_\cU(\Tr_\pi(A_1, \dots, A_n)) 
=
\prod_{i=1}^k \E(\Tr_{\pi_i}(A_1, \dots, A_n)).
\end{equation} 
Finally for $\eta = (\eta_1, \eta_2, \dots, \eta_n) \in
\bZ_2^n$ and $\pi \in S_n$, let
\[
\E_\cU(\Tr_{(\pi, \eta)}(A_1, \dots, A_n)) =
\E_\cU(\Tr_\pi(A^{(\eta_1)}_1, \dots, A^{(\eta_n)}_n)).
\]
To make this clear let us give an example. Let $n = 6$, $\pi
= (1)(2,4)(3)$ and $\cU = \{(1,3),(2,4)\}$. Then
\[
\E_\cU(\Tr_\pi(A_1, A_2, A_3, A_4)) =
\E(\Tr(A_1)\Tr(A_3))\E(\Tr(A_2A_4)).
\]
We shall also need to work with the normalized trace $\tr =
d^{-1}\Tr$. We let $\tr_\pi(A_1, \dots, A_n) = d^{-\#(\pi)}
\Tr_\pi(A_1, \dots, A_n)$.

If $\cU \in \cP(n)$ and $\pi \leq \cU$, in the sense above,
then we let
\begin{equation}\label{equation:cumulantnotation}
\rk_\cU(\Tr_{\pi}(A_1, \dots, A_n))
=
\mathop{\sum_{\cV \in \cP(n)}}_{\pi \leq \cV \leq \cU}
m(\cV, \cU) \E_\cV(\Tr_{\pi_i}(A_1, \dots, A_n)).
\end{equation}
Then by M\"obius inversion we have
\begin{equation}\label{equation:moebiusinversion}
\E_\cU(\Tr_{\pi_i}(A_1, \dots, A_n))
=
\mathop{\sum_{\cV \in \cP(n)}}_{\pi \leq \cV \leq \cU}
\rk_\cV(\Tr_{\pi}(A_1, \dots, A_n)).
\end{equation}
\end{notation}

\begin{remark}
In what follows, for an ensemble of $d \times d$ matrices $\{A_{1},
\dots,\ab A_{s}\}_d$, will suppress the dependency of
$A_{i}$ on $d$ and just denote it by $A_i$. Moreover the
$(i,j)$-entry of $A_k$ will be denoted $a^{(k)}_{ij}$. This
should not cause any confusion as at each stage of the
discussion we shall only be multiplying matrices of the same
size. Likewise for an ensemble of random orthogonal
orthogonal matrices $\{O_d \}_d$, we shall drop the
dependence on $d$ from the notation.
\end{remark}

\begin{theorem}\label{theorem:firstorderfreeness}
Let for each $d$, $\{ A_{1}, \dots, A_{n}\}$ be a
ensemble of centred $d \times d$ random matrices that have a
real second order limit distribution, $O$ a Haar distributed
random $d \times d$ orthogonal matrix, and $m_1, \dots, m_n$
be non-zero integers. Then
\[
\lim_{d \rightarrow \infty}
\E(\tr(O^{m_1} A_{1} \cdots O^{m_n} A_{n}))
= 0.
\]
\end{theorem}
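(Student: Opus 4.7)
Plan:

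The strategy is to apply Proposition~\ref{proposition:exact_formula} after padding, and then show that every term of the resulting Weingarten expansion is $\lo(d)$. I would first pad by writing $O^{m_i}=(O^{\mathrm{sgn}(m_i)})^{|m_i|}$ and inserting identity matrices, yielding with $N=|m_1|+\cdots+|m_n|$ the expression
\[
O^{m_1}A_1\cdots O^{m_n}A_n = O^{\epsilon_1}Y_1\cdots O^{\epsilon_N}Y_N,
\]
where each $Y_k$ is either the identity or one of $A_1,\ldots,A_n$, each $A_i$ appearing exactly once. Proposition~\ref{proposition:exact_formula} with $\gamma=(1,2,\ldots,N)$ then writes the expectation $\E(\Tr_\gamma(O^{\epsilon_1}Y_1,\ldots,O^{\epsilon_N}Y_N))$ as $\sum_{p,q\in\cP_2(N)}\langle\Wg(p),q\rangle\,\E(\Tr_{(\pi_\peq,\eta_\peq)}(Y_1,\ldots,Y_N))$.

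For each summand the Weingarten estimate~\eqref{equation:weingartenorder} gives $\langle\Wg(p),q\rangle=\rO(d^{-N+\#(p\vee q)})$; the trace product, by the real second order limit distribution, factorizes to leading order as $\prod_{c\in\pi_\peq}\E(\Tr_c)=\rO(d^{\#(\pi_\peq)})$, with strictly subleading cumulant corrections. A geodesic-type inequality $\#(p\vee q)+\#(\pi_\peq)\leq N+1$ (analogous in spirit to~\eqref{equation:genusexpansion}) bounds each summand by $\rO(d)$ in absolute size, so terms with strict inequality contribute $\lo(d)$ automatically. For the extremal summands the centredness hypothesis provides the key improvement: any cycle $c$ of $\pi_\peq$ containing exactly one $A$-entry (with otherwise only identities) has $\E(\Tr_c)=\lo(d)$, which drops the extremal summand to $\lo(d)$ as well. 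Division by $d$ then yields $\lo(1)$ throughout, and the sum over finitely many $(p,q)$ completes the argument.

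The core combinatorial task is to verify that every $(p,q)$ achieving equality in the geodesic bound produces at least one such lone-$A$ cycle of $\pi_\peq$. At equality $\pi_\peq$ is forced to be a pairing, and adapting Lemmas~\ref{lemma:standardspokediagram} and \ref{lemma:reversedspokediagram} to the present single-cycle $\gamma$ (for example by thinking of two half-arcs of the single cycle as playing the role of the two circles of an annulus) shows that $\pi_\peq$ has spoke-type structure, pairing each $k$ with $\gamma^{\pm k}(l)$. Because in the original product each $A_i$ is separated from the next $A$ by a nontrivial power of $O$, the $A$-positions in $(Y_1,\ldots,Y_N)$ cannot be closed into pairs purely among themselves under such a rigid spoke pattern, so some pair of $\pi_\peq$ must match an $A$-position with an identity-position; centredness then kills the corresponding trace. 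The main obstacle will be making this final combinatorial step rigorous --- in particular adapting the spoke-diagram lemmas to our single-cycle $\gamma$ and handling the degenerate case where every $|m_i|=1$ (no identity padding at all), which requires a separate parity/alignment argument showing that every extremal configuration still contains a cycle of $\pi_\peq$ reducing to $\E(\Tr(A_i^{\pm}))$ or an equivalent lone centred factor.
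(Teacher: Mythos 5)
Your overall skeleton (pad to $\epsilon_i\in\{\pm1\}$ with inserted identities, apply Proposition~\ref{proposition:exact_formula}, bound each Weingarten term, use centredness to kill the extremal ones) is the same as the paper's, but the order-counting step as you describe it has a genuine gap, in fact two. First, the inequality $\#(p\vee q)+\#(\pi_\peq)\leq N+1$ is asserted by analogy with (\ref{equation:genusexpansion}) but not proved; it is a genuine orthogonal genus-expansion statement about the permutation $\pi_\peq$ of Notation~\ref{notation:krewerascomplement}, not a formal consequence of (\ref{equation:genusexpansion}), and the paper deliberately avoids needing it. Second, and more seriously, your characterization of the extremal configurations is backwards: since $\#(p\vee q)\leq N/2$ for pairings, equality in your bound forces $\#(\pi_\peq)\geq N/2+1$, so $\pi_\peq$ cannot be a pairing --- it must have at least two \emph{singletons}. (For example $\E(\Tr(OA_1O^{-1}A_2))=d^{-1}\Tr(A_1)\Tr(A_2)$ comes from $\pi_\peq=(1)(2)$.) Consequently the plan of ``adapting Lemmas~\ref{lemma:standardspokediagram} and \ref{lemma:reversedspokediagram}'' cannot work as stated: those lemmas describe the case where $\pi_\pep$ \emph{is} a pairing and $\gamma$ has two cycles, and they are the engine of the second-order covariance computation (Theorem~\ref{theorem:secondorderlimit}), not of the first-order statement.

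The mechanism that actually closes the argument --- and which also disposes of the ``degenerate case'' you flag at the end --- is the observation in the paper's proof that if $(k)$ is a singleton of $\pi_\peq$ then $(k,-k)$ is a cycle of $\peq$, which forces $\epsilon_k=-\epsilon_{\gamma(k)}$; since identities are only ever inserted between two $O$'s of the same sign, such a $Y_k$ must be a centred $A_i$. One then expands $\E(\Tr_{(\pi_\peq,\eta_\peq)}(\cdots))$ into cumulants over partitions $\cU\geq\pi_\peq$ rather than factorizing naively: any $\cU$ with a singleton block contributes $0$ (centredness), so every surviving $\cU$ satisfies $\#(\cU)\leq N/2$, giving $\rk_\cU=\rO(d^{u})$ with $u\leq N/2$. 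Combined with $\langle\Wg(p),q\rangle=\rO(d^{-N+\#(p\vee q)})$ and the trivial bound $\#(p\vee q)\leq N/2$, every term is $\rO(d^{0})$, and normalizing by $d^{-1}$ finishes the proof with no geodesic inequality and no analysis of extremal configurations. You would need either to adopt this route or to actually prove your inequality and redo the extremal analysis with singletons rather than pairings.
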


\begin{proof}
In order to be able to use the result of Proposition
\ref{proposition:exact_formula}, with $\gamma= (1, 2,
3. \dots, n)$, we have to reduce it to the case of each
$m_i$ being either $1$ or $-1$. We can achieve this by
inserting an identity matrix, $I$, between any two adjacent
$O$'s or adjacent $O^{-1}$'s. For example $O^2A_1O^{-1}A_2$
would become $O I O A_1 O^{-1} A_2$. So with this change we
must show that, whenever we have $\epsilon_1, \dots,
\epsilon_n \in \{-1, 1\}$ and random matrices $A_1, \dots,
A_n$ with a limit joint $t$-distribution such that for
each $i$, either $A_i$ is centred, i.e. $\E(\tr(A_i)) = 0$,
or $A_i = I$ and $\epsilon_{i} = \epsilon_{\gamma(i)}$, then
\[
\lim_{d \rightarrow \infty}
\E(\tr(O^{\epsilon_1} A_1 \cdots O^{\epsilon_n} A_n))
= 0.
\]

By Proposition \ref{proposition:exact_formula}
\begin{eqnarray*}
\lefteqn{
\E(\Tr(O^{\epsilon_1}A_1, \dots, O^{\epsilon_n}A_n))}\\ & =
  & \sum_{p,q \in \cP_2(n)} \langle \Wg(p), q\rangle
  \E(\Tr_{(\pi_{p \mskip 2mu\cdot_\epsilon q}, \eta_{p \mskip
      2mu\cdot_\epsilon q})}(A_1, \dots, A_n)).
\end{eqnarray*}

Let us recall the construction of $\pi_{p \mskip
  2mu\cdot_\epsilon q}$. We write the permutation $(p \mskip
2mu\cdot_\epsilon q) \delta$, which is the product of two
pairings, as a product of cycles. We showed that the cycles
always occur in pairs of the form $\{c, c'\}$, where $c' =
\delta c^{-1} \delta$. From each pair we choose one, and
then from this we obtained a cycle of $\pi_{p \mskip
  2mu\cdot_\epsilon q}$ by deleting any minus signs. The
minus signs that are deleted are recorded in $\eta_\peq$. So
let us consider the singletons of $\pi_{p \mskip
  2mu\cdot_\epsilon q}$. If $(k)$ is a singleton of
$\pi_{p \mskip 2mu\cdot_\epsilon q}$, then $(p \mskip
2mu\cdot_\epsilon q) \delta$ will have the two singletons
$(k)(-k)$ and thus $(k, -k)$ will be a cycle of $(p \mskip
2mu\cdot_\epsilon q)$ and hence $(-\delta_\epsilon(k),
\delta_\epsilon(\gamma(k)))$ will be a cycle of $p \delta q
\delta$. The cycles of $p \delta q \delta$ are either cycles
of $p$, consisting of pairs of positive numbers, or cycles
of $\delta q \delta$, consisting of pairs of negative
numbers. \textit{Thus if $(k)$ is a singleton of
  $\pi_{p \mskip 2mu\cdot_\epsilon q}$ then we must have
  $\epsilon_k = -\epsilon_{\gamma(k)}$, and hence $A_k$ is a
  centred matrix.}

Now consider the expansion
\begin{eqnarray*}
\lefteqn{
\E(\Tr(O^{\epsilon_1}A_1, \dots, O^{\epsilon_n}A_n))}\\ & =
  & \sum_{p,q \in \cP_2(n)} \langle \Wg(p), q\rangle
\E(\Tr_{(\pi_\peq, \eta_\peq)}(A_1, \dots, A_n)).
\end{eqnarray*}
We have
\[
\langle \Wg(p), q \rangle = \rO(d^{-n + \#(p \vee q)}).
\]
We must next find a upper bound for the order of
\[
\E(\Tr_{(\pi_{p \mskip 2mu\cdot_\epsilon q}, \eta_{p \mskip
      2mu\cdot\epsilon q})}(A_1, \dots, A_n) )
= \sum_{\pi \leq \cU}
\rk_\cU(\Tr_{(\pi_\peq, \eta_\peq)}(A_1, \dots, A_n)).
\] 
Since $\{A_1, \dots, A_n\}$ has a real second order limit
distribution we have that
\[
\rk_\cU(\Tr_{(\pi_\peq, \eta_\peq)}(A_1, \dots, A_n))
=
\rO(d^u)
\]
where $u$ is the number of blocks of $\cU$ that contain a
single cycle of $\pi$. If $\cU$ has a singleton $(k)$ then
$\pi$, too, will have a singleton $(k)$ and then $A_k$ will
be centred so $\rk_\cU(\Tr_{(\pi_\peq, \eta_\peq)}(A_1,
\dots, A_n))$ will have a factor $\E(\Tr(A_k))= 0$, hence
$\rk_\cU(\Tr_{(\pi_\peq, \eta_\peq)}(A_1, \dots, A_n)) = 0$.

Thus $u \leq \#(\cU) \leq n/2$ and so $-n + \#(p \vee q) + u
\leq 0$, thus
\[
\langle \Wg(p), q\rangle
\E(\Tr_{(\pi_\peq, \eta_\peq)}(A_1, \dots, A_n))
=
\rO(1).
\]

Thus
\[
\E(\Tr(O^{\epsilon_1}A_1, \dots, O^{\epsilon_n}A_n))
      = \rO(1)
\]
and hence
\[
\lim_{d \rightarrow \infty}
\E(\tr(O^{\epsilon_1}A_1, \dots, O^{\epsilon_n}A_n))
      = 0.
\]
\end{proof}

\begin{corollary}\label{corollary:firstcumulants}
Let $\{ A_1, \dots, A_{n+1}\}$ be $d \times d$ random
matrices whose entries have moments of all orders, $O$ a
Haar distributed random $d \times d$ orthogonal matrix,
independent from $\{ A_1, \dots, A_{n+1}\}$, and
$\epsilon_1, \dots, \epsilon_n \in \bZ_2$. Suppose that for
each $1 \leq i \leq n$ we have that either $\E(\Tr(A_i)) =
0$ \textit{or} $A_i = I$ and $\epsilon_i = \epsilon_{i+1}$
(using $\epsilon_{n+1} = \epsilon_1$), and $\E(\Tr(A_{n+1}))
= 0$. Then
\[
\E(\Tr(O^{\epsilon_1} A_1 \cdots O^{\epsilon_n} A_n))
= \rO(1),
\]
in fact
\begin{eqnarray}\label{equation:firstcumulantandp}\lefteqn{
\E(\Tr(O^{\epsilon_1} A_1 \cdots O^{\epsilon_n} A_n))}\notag\\
& = &
d^{-n/2}\sum_{p \in \cP_2(n)} \E_{\pi_\pep}(
\Tr_{(\pi_\pep, \eta_\pep)}(A_1, \dots, A_n)) + \rO(d^{-1})
\end{eqnarray}
where the sum is over all $p$'s such that $\pi_\pep$ is a
pairing and
\[
\E(\Tr(O^{\epsilon_1} A_1 \cdots O^{\epsilon_n} A_n)\Tr(A_{n+1}))
= \rO(d^{-1}).
\]
\end{corollary}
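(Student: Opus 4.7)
The plan is to refine the argument of Theorem \ref{theorem:firstorderfreeness}, extracting the leading-order $\rO(1)$ term explicitly rather than merely showing it vanishes after the $d^{-1}$ normalisation. First I would apply Proposition \ref{proposition:exact_formula} to write
\[
\E(\Tr(O^{\epsilon_1}A_1 \cdots O^{\epsilon_n}A_n))
= \sum_{p,q \in \cP_2(n)} \langle \Wg(p), q\rangle\,
    \E\bigl(\Tr_{(\pi_\peq,\eta_\peq)}(A_1,\dots,A_n)\bigr).
\]
By (\ref{equation:weingartenorder}), $\langle \Wg(p), q\rangle = \rO(d^{-n + \#(p \vee q)})$, and since $p, q$ are pairings $\#(p \vee q) \le n/2$ with equality iff $p = q$; by Remark \ref{remark:leadingorder} the diagonal case has $\rho = \mathrm{id}$ (a product of $n$ unit cycles, each yielding $(-1)^0 C_0 = 1$), so $\langle\Wg(p),p\rangle = d^{-n/2} + \rO(d^{-n/2-1})$.

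Next I would bound the matrix expectation via (\ref{equation:moebiusinversion}): writing $\E(\Tr_{(\pi_\peq,\eta_\peq)}(A)) = \sum_{\pi_\peq \le \cV} \rk_\cV$, the real-second-order-limit hypothesis gives $\rk_\cV = \rO(d^{2\#(\cV) - \#(\pi_\peq)})$. The key input, already exploited in the proof of Theorem \ref{theorem:firstorderfreeness}, is that any fixed point $(k)$ of $\pi_\peq$ forces $\epsilon_k = -\epsilon_{\gamma(k)}$, whence $A_k$ is centred by hypothesis. Consequently any $\cV$ with a singleton block $\{k\}$ gives $\rk_\cV = 0$ via the factor $\E(\Tr(A_k)) = 0$. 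If $\pi_\peq$ has $f$ fixed points and $c$ cycles of length $\ge 2$, the no-singleton constraint bounds $\#(\cV) \le c + \lfloor f/2\rfloor$, yielding $\E(\Tr_{(\pi_\peq,\eta_\peq)}(A)) = \rO(d^c)$ with $c \le n/2$ and equality iff $\pi_\peq$ is a pairing. Combined with the Weingarten bound, each summand is $\rO(d^{-n + \#(p\vee q) + c}) \le \rO(1)$, with equality precisely when $p = q$ and $\pi_\pep$ is a pairing; this gives the first assertion.

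For the explicit expansion (\ref{equation:firstcumulantandp}) I would collect the $\rO(1)$ survivors. For each $p$ with $\pi_\pep$ a pairing, the leading contribution is the product of the $d^{-n/2}$ piece of $\langle \Wg(p),p\rangle$ with the $\cV = \pi_\pep$ piece of the cumulant expansion, namely $\E_{\pi_\pep}(\Tr_{(\pi_\pep,\eta_\pep)}(A))$. Every other contribution is $\rO(d^{-1})$: the subleading Weingarten term contributes $\rO(d^{-n/2-1})\cdot\rO(d^{n/2}) = \rO(d^{-1})$; coarser $\cV$ give $\#(\cV)\le n/2-1$ and order $\rO(d^{-2})$; $p = q$ with $\pi_\pep$ not a pairing has $c \le n/2-1$, yielding $\rO(d^{-1})$; and $p \ne q$ has $\#(p \vee q) \le n/2-1$, again $\rO(d^{-1})$.

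For the final assertion I would apply Proposition \ref{proposition:exact_formulafirstextension} and expand analogously, with the internal factor replaced by $\E(\Tr_{(\pi_\peq,\eta_\peq)}(A)\Tr(A_{n+1}))$. The centring of $A_{n+1}$ plays the same role as for the $A_k$'s: $\{n+1\}$ cannot be a singleton block of the joint cumulant partition, so the cluster containing $(n+1)$ must absorb at least one cycle of $\pi_\peq$, converting a potential $\E(\Tr(A_{n+1})) = \rO(d)$ factor into an $\rO(1)$ joint cumulant. This lowers the maximum order by exactly $d$, giving $\E(\Tr_{(\pi_\peq,\eta_\peq)}(A)\Tr(A_{n+1})) = \rO(d^{c-1})$ and total size $\rO(d^{-1})$. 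The main obstacle is the combinatorial accounting that simultaneously imposes the no-singleton condition, produces the bound $\#(\cV) \le c + \lfloor f/2\rfloor$, and yields the sharp inequality $\#(\cV) \le n/2$ with equality iff $\pi_\peq$ is a pairing; once this bookkeeping is in place the split into the leading term and the $\rO(d^{-1})$ remainder is routine.
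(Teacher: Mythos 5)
Your overall strategy is the same as the paper's: expand via Proposition \ref{proposition:exact_formula}, use the Weingarten order bound together with the multiplicativity of the leading coefficient to isolate the diagonal $p=q$ terms, expand the matrix expectation into cumulants, use the observation that a fixed point of $\pi_\peq$ forces the corresponding $A_k$ to be centred so that singleton blocks die, and handle the final claim via Proposition \ref{proposition:exact_formulafirstextension} with the extra cycle $(n+1)$ forced into a multi-cycle block. The conclusions you reach at each stage are the ones the paper reaches, and the structure is sound.

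One intermediate formula, however, is wrong as stated: the claim that the second-order-limit hypothesis gives $\rk_\cV = \rO(d^{2\#(\cV) - \#(\pi_\peq)})$. This would require the $r$-th cumulants of traces to decay like $d^{2-r}$, whereas Definition \ref{definition:secondorderlimiting} only gives that they are $\lo(1)$ for $r \ge 3$; moreover the exponent is not the right combinatorial count (a partition with one singleton-cycle block and one block of three cycles has order $\lo(d)$, not $\rO(d^{0})$ as your formula predicts). The correct statement, which is what the paper uses, is $\rk_\cV = \rO(d^{u})$ where $u$ is the number of blocks of $\cV$ containing exactly one cycle of $\pi_\peq$; combined with the no-singleton constraint this gives $u \le c$ (and $u\le\#(\cV)\le n/2$), which is all you need, and it also repairs your claimed orders $\rO(d^{-2})$ for coarser $\cV$ (correctly $\rO(d^{-1})$, which still suffices) and $\rO(d^{c-1})$ in the last step (the block of $(n+1)$ may absorb a fixed point rather than a long cycle, but then $c \le n/2-1$ already, so the total is still $\rO(d^{-1})$). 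With that substitution your argument coincides with the paper's proof.
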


\begin{proof}
The first claim is just the second last equation of the
proof of Theorem \ref{theorem:firstorderfreeness}. Recall
that when we expand into cumulants
\[
\E(\Tr_{(\pi_\peq, \eta_\peq)}(A_1, \dots, A_n))
=
\mathop{\sum_{\cU \in \cP(n)}}_{\pi_\peq \leq \cU}
 \rk_\cU(A_1, \dots, A_n)
\] 
and let $u$ be the number of blocks of $\cU$ that contain a
single cycle of $\pi_\peq$ we have $-n + \#(p \vee q) + u
\leq 0$ with equality only when $p = q$ and $n = n/2$,
i.e. $\cU = \pi_\pep$ and $\pi_\pep$ is a pairing. This
establishes the second claim.

By Proposition \ref{proposition:exact_formulafirstextension}
we have
\[
\E(\Tr(O^{\epsilon_1} A_1 \cdots O^{\epsilon_n} A_n)\Tr(A_{n+1}))
\]
\[ =
\sum_{p,q \in \cP_2(n)} \kern-1em \langle
  \Wg(p), q\rangle \E(\Tr_{(\pi_\peq, \eta_\peq)}(A_1, \dots,
  A_n) \Tr(A_{n+1})).
  \]
For the moment let us fix $p,q \in \cP_2(n)$ and let
$\tilde\pi \in S_{n+1}$ be the permutation which fixes $n+1$
and whose restriction to $[n]$ is $\pi_\peq$. Likewise let
$\tilde\eta|_{[n]} = \eta_\peq$ and $\tilde\eta_{n+1}=
1$. Then $\E(\Tr_{(\pi_\peq, \eta_\peq)}(A_1, \dots,
A_n)\Tr(A_{n+1}))\ab = \E(\Tr_{(\tilde\pi, \tilde\eta)}(A_1,
\dots, A_{n+1}))$ Then we expand as above
\[
\E(\Tr_{(\tilde\pi, \tilde\eta)}(A_1, \dots, A_{n+1}))  =
\mathop{\sum_{\cU \in \cP(n+1)}}_{\tilde\pi \leq \cU}
\rk_\cU(\Tr_{(\tilde\pi, \tilde\eta)}(A_1, \dots, A_{n+1})).
\]
Suppose $\cU \in \cP(n+1)$ is such that $\tilde\pi \leq \cU$
and $\rk_\cU(\Tr_{(\tilde\pi, \tilde\eta)}(A_1, \dots,\ab
A_{n+1})) \not = 0$. Then
\[
\rk_\cU(\Tr_{(\tilde\pi, \tilde\eta)}(A_1, \dots,\ab
A_{n+1})) = \rO(d^u)
\]
where $u$ is the number of blocks of $\cU$ that contain only
one cycle of $\tilde\pi$. Since, by assumption,
$\E(\Tr(A_{n+1})) = 0$, the last cycle of $\tilde\pi$ cannot
be in a block of $\cU$ on its own (otherwise $\rk_\cU= 0$);
thus $u \leq \#(\cU) - 1$. As in the proof of Theorem
\ref{theorem:firstorderfreeness}, $\#(\cU|_{[n]}) \leq n/2$
and as the cycle $(n+1)$ cannot be on its own we have
$\#(\cU) \leq n/2$. So $u \leq n/2 -1$.  Thus $-n + \#(p
\vee q) + u \leq -1$ and so
\[
\langle \Wg(p), q\rangle \rk_\cU(\Tr_{(\tilde\pi,
  \tilde\eta)}(A_1, \dots,\ab A_{n+1})) = \rO(d^{-1}).
\]
Since this holds for every $\cU$ we have 
\[
\langle \Wg(p), q\rangle \E(\Tr_{(\tilde\pi,
  \tilde\eta)}(A_1, \dots,\ab A_{n+1})) = \rO(d^{-1}).
\]
Since this in turn holds for every $p$ and $q$ we have
\[
\E(\Tr(O^{\epsilon_1} A_1 \cdots O^{\epsilon_n}
A_n)\Tr(A_{n+1})) = \rO(d^{-1}).
\]
\end{proof}


\section{Fluctuation Moments of Haar Orthogonal\\
 and Independent Random Matrices}\label{section:fluctuationmoments}

Our next step is to show that the limit distribution of Haar
distributed orthogonal matrices and an independent ensemble of random
matrices with a real second order limit distribution
satisfies part (\textit{ii})\, (\textit{b}) of Definition
\ref{definition:secondorderfreeness}. Fix positive integers
$m$ and $n$ and let $\gamma$ be the permutation with the two
cycles $(1, \dots, m)(m+1, \dots, m+n)$.

\begin{theorem}\label{theorem:secondorderlimit}
Let $\{A_{1},\dots, A_{m}\}$ and $\{B_{1}, \dots,
B_{n}\}$ be a ensemble of centred $d \times d$ matrices
that have a real second order limit distributions given by
$(a_1, \dots, a_m)$ and $(b_1, \dots, b_n)$, respectively,
in a real second order non-commutative probability space
$(\cA, \phi, \phi_2, t)$, and $O$ a Haar distributed
random $d \times d$ orthogonal matrix, and $k_1, \dots,
k_{m},l_1, \dots, l_n$ non-zero integers. Suppose that the
entries of $\{A_{1}, \dots, A_{m}, B_{1}, \dots,\ab
B_{n}\}$ are independent from those of $O$. Then
\[
\lim_{d \rightarrow \infty}
\cov( \Tr(O^{k_1}A_{1} \cdots O^{k_m} A_{m}),
      \Tr(O^{l_{1}} B_{1} \cdots O^{l_{n}} B_{n})) 
\]
exists and equals 0 when $m \not= n$, and when $m = n \geq 2$,
equals
\begin{equation}\label{equation:secondorderside}   
\sum_{r=1}^m \bigg\{
\prod_{i=1}^m \phi(a_i b_{r-i})\phi(o^{k_i + l_{r-(i-1)}})
+
\prod_{i=1}^m \phi(a_i b_{r+i}^t) \phi(o^{k_i - l_{r+i}})
\bigg\}.
\end{equation}
where the indices of the $b$'s and $l$'s are taken modulo
$m$.
\end{theorem}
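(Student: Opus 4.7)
The plan is to run the same kind of Weingarten-plus-cumulant analysis used in Theorem \ref{theorem:firstorderfreeness}, but now with $\gamma$ the two-cycle permutation $(1,\dots,m)(m+1,\dots,m+n)$ and tracked one power of $d$ more carefully, so as to isolate the surviving spoke-diagram terms. First I would use the insertion trick from the proof of Theorem \ref{theorem:firstorderfreeness} to reduce to the situation where every $O$-exponent is $\pm 1$. This replaces the original problem by one on enlarged lengths $M,N$ with signs $\epsilon\in\bZ_2^{M+N}$; at inserted-identity positions $\epsilon_k=\epsilon_{\gamma(k)}$, while at positions carrying a genuine $A_i$ or $B_j$ the matrix is centred. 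Proposition \ref{proposition:exact_formula} then gives
\[
\E(\Tr_\gamma(\cdots)) = \sum_{p,q\in\cP_2(M+N)} \langle\Wg(p),q\rangle\,\E(\Tr_{(\pi_\peq,\eta_\peq)}(\cdots)),
\]
and I would expand the right-hand expectation into $\cU$-cumulants over $\cU\ge\pi_\peq$.

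For the order analysis, (\ref{equation:weingartenorder}) gives $\langle\Wg(p),q\rangle=\rO(d^{-(M+N)+\#(p\vee q)})$, while the real second order limit hypothesis bounds $\rk_\cU = \rO(d^{b_1(\cU)})$ with $b_1(\cU)$ the number of blocks of $\cU$ consisting of a single cycle of $\pi_\peq$. Exactly as in the first-order argument, a singleton $(k)$ of $\pi_\peq$ forces $\epsilon_k=-\epsilon_{\gamma(k)}$, which is incompatible with an inserted identity and hence sits on a centred matrix, contributing zero. So only pairings $\pi_\peq$ survive, whence $\#(\pi_\peq)=(M+N)/2$; combined with $\#(p\vee q)\le(M+N)/2$, the total order is at most $\rO(1)$ with equality only when $p=q$ and $\cU=\pi_\pep$. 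I would then split the $p=q$ terms according to whether $p$ connects the two cycles of $\gamma$: the disconnected contributions factor into two independent traces and exactly cancel $\E(X)\E(Y)$ in the covariance, leaving only the connected contributions.

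Next, Lemmas \ref{lemma:standardspokediagram} and \ref{lemma:reversedspokediagram} classify the connected $p$'s. They immediately force $M=N$, hence $m=n$ (so the covariance vanishes when $m\neq n$), and sort the surviving pairings into standard spoke diagrams (with $\eta_k\equiv 1$) when some pair $(r,s)\in p$ satisfies $\epsilon_r=-\epsilon_s$, and reversed spoke diagrams (with $\eta_k\equiv -1$) when $\epsilon_r=\epsilon_s$. For each such diagram with rotation parameter $l$, I would read off $\Tr_{(\pi_\pep,\eta_\pep)}$ as a product of $m$ traces $\Tr(A_iB_{j(i)})$ in the standard case, or $\Tr(A_iB_{j(i)}^t)$ in the reversed case, where $j(i)$ is dictated by parts (\textit{a})--(\textit{b}) of the two lemmas; using Remark \ref{remark:leadingorder} for the leading coefficient of $\langle\Wg(p),p\rangle$, the surviving contribution to the covariance becomes a sum over spoke diagrams of a product of normalized traces converging to $\phi(a_ib_{j(i)})$ or $\phi(a_ib_{j(i)}^t)$. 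Finally, collapsing the inserted identities back into genuine $O$-powers groups the pairs belonging to a single spoke into one Weingarten factor whose leading asymptotic is $\phi(o^{k_i+l_{r-(i-1)}})$ or $\phi(o^{k_i-l_{r+i}})$, and re-indexing the rotation parameter as $r\in\{1,\dots,m\}$ reassembles the two displayed products in (\ref{equation:secondorderside}).

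The main obstacle is the last step: matching the combinatorial parametrization of spoke diagrams (rotation $l$, signs $\epsilon$, and transposition data $\eta$) to the precise indices in (\ref{equation:secondorderside}) after re-compressing identities, and verifying that nothing gets double-counted or dropped, particularly the correct modular arithmetic on the $b$-subscripts. The cancellation of the disconnected $p=q$ terms against $\E(X)\E(Y)$ is also delicate but becomes routine once the order analysis has isolated the right class of pairings.
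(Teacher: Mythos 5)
Your overall architecture is the same as the paper's: reduce to $\pm1$ exponents by inserting identities, expand via Proposition \ref{proposition:exact_formula}, do the cumulant/order count to isolate $p=q$ with $\pi_\pep$ a pairing, and then classify the surviving connected pairings by Lemmas \ref{lemma:standardspokediagram} and \ref{lemma:reversedspokediagram}. But there is a genuine gap at the step where you dispose of the disconnected contributions. You assert that for $p=q\leq\gamma$ the contributions ``factor into two independent traces and exactly cancel $\E(X)\E(Y)$.'' This is false on two counts, and the two failures are precisely what occupy the last third of the paper's proof. First, the Weingarten function is not multiplicative: the weight $\langle \Wg(p),q\rangle$ appearing in the expansion of $\E(XY)$ is not equal to the product of blockwise Weingarten weights $\Wg(\gamma,p,q)$ appearing in the expansion of $\E(X)\,\E(Y)$; by Remark \ref{remark:subleadingorder} they agree only to leading order, with difference $\rO(d^{-(m+n)+\#(p\vee q)-1})$. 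Second, since the $A$'s and $B$'s are random (not deterministic), $\E(\Tr_{\pi_\peq}(\cdots))$ does not factor as $\E_\gamma(\Tr_{\pi_\peq}(\cdots))$; their difference is a second cumulant of products of traces. Each discrepancy is of lower order \emph{individually}, but each is multiplied by a factor that can grow like $d^{(m+n)/2}$, so one must actually prove that both products still tend to zero. The paper does this by writing the covariance as a sum of three terms (its expressions (\ref{expression:firstterm})--(\ref{expression:thirdterm})): the connected term giving the spoke diagrams, the term controlled by the non-multiplicativity estimate, and the term controlled via the Leonov--Shiryaev formula, where the key point is that a partition $\cU$ with $\cU\vee\gamma=1_{m+n}$ but $\pi_\peq\leq\gamma$ forces $u<(m+n)/2$ and hence an extra power of $d^{-1}$.

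Your hedge that this cancellation ``becomes routine once the order analysis has isolated the right class of pairings'' does not repair the gap: the order analysis you describe bounds $\langle\Wg(p),q\rangle\,\E(\Tr_{\pi_\peq})$ by $\rO(1)$, which is exactly the borderline order, so the cancellation must be established to precision $\lo(1)$, not merely to leading order. Without the two estimates above you cannot conclude that the disconnected $\rO(1)$ contributions to $\E(XY)$ and to $\E(X)\E(Y)$ agree up to $\lo(1)$. The rest of your outline (the singleton argument forcing centred matrices, the classification into standard and reversed spokes with $\eta\equiv1$ or $\eta\equiv-1$, and the recompression of inserted identities into the factors $\phi(o^{k_i+l_{r-(i-1)}})$ and $\phi(o^{k_i-l_{r+i}})$) matches the paper and is sound.
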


\begin{proof} 
We begin
by noting that by Theorem \ref{theorem:weingartenexpansion},
$m +n $ must be even, otherwise the limit of the covariances
is 0. In order to apply Proposition
\ref{proposition:exact_formula} to the expression
\[
\cov(\Tr(O^{k_1} A_1 \cdots O^{k_m}A_m), \Tr(O^{l_1}B_1
\cdots O^{l_n}B_n))
\]
we have to reduce it to the case where all $k$'s and $l$'s
are either 1 or $-1$. So let us consider the term
$\phi(a_ib_{r-i}) \phi(o^{k_i + l_{r-(i-1)}})$ of expression
(\ref{equation:secondorderside}). In order for this to be
non-zero we must have $k_i + l_{r-(i-1)} = 0$. So when we
perform the reduction used in the proof of Theorem
\ref{theorem:firstorderfreeness} we replace $o^{k_i}$,
supposing $k_i > 0$, with $o1o\cdots o1o$ and
$o^{l_{r-i+1}}$ with $o^{-1}1o^{-1} \cdots
o^{-1}1o^{-1}$ the factor $\phi(o^{k_i + l_{r-i+1}}) = 1$
gets replaced by $\phi(oo^{-1}) \phi(11) \phi(oo^{-1})
\cdots \phi(oo^{-1}) \phi(11) \phi(oo^{-1}) = 1$. Likewise 
with the factor $\phi(o^{k_i - l_{r+i}})$. Thus without loss of
generality we can assume that $k_1\dots, k_m l_1, \dots, l_n
\in \{-1, 1\}$. In this case we must show that
\[
\lim_{d \rightarrow \infty} \cov( \Tr(O^{\epsilon_1}A_{1}
\cdots O^{\epsilon_m} A_{m}),
\Tr(O^{\epsilon_{m+1}}B_{1} \cdots O^{\epsilon_{m+n}}
B_{n}))
\]
exists and equals 0 when $m \not = n$ and when $m = n$
equals
\begin{equation}\label{equation:secondorderlimit}
\sum_{r=1}^m \bigg\{ \prod_{i=1}^m \phi(a_i
b_{r-i})\delta_{\epsilon_i, -\epsilon_{\gamma^{-i+1}(m + r)}} +
\prod_{i=1}^m \phi(a_i b_{r+i}^t) \delta_{\epsilon_i,
  \epsilon_{\gamma^{i}(m+r)}} \bigg\},
\end{equation}
where the $\gamma$ in the index of the second $\epsilon$ in
$\delta_{\epsilon_i, \epsilon_{\gamma^{-i+1}(m+r)}}$ is the
permutation with cycle decomposition $(1, \dots, m)(m+1, \dots, 2m)$.

By Proposition
\ref{proposition:exact_formula}
\[
\E( \Tr(O^{\epsilon_1} A_1 \cdots O^{\epsilon_m}  A_m) 
\Tr(O^{\epsilon_{m+1}} B_{1}\cdots O^{\epsilon_{m+n}} B_{n}))
\]
\[
=
\sum_{p, q \in \cP_2(m+n)} \langle \Wg(p), q \rangle
\E(\Tr_{(\pi_\peq, \eta_\peq)}(A_1, \dots, B_n)),
\]
and
\[
\E( \Tr(O^{\epsilon_1} A_1 \cdots O^{\epsilon_m} A_m)) \,
\E( \Tr(O^{\epsilon_{m+1}} B_{1} \cdots O^{\epsilon_{m+n}} B_{n}))
\]
\[
=
\mathop{\sum_{p, q \in \cP_2(m+n)}}_{p, q \leq \gamma}
\Wg(\gamma, p, q) \E_\gamma(\Tr_{\pi_\peq}(\vec{A}^\eta, \vec{B}^\eta)).
\]
To simplify the notation we let
$\Tr_{\pi_\peq}(\vec{A}^\eta,\ab \vec{B}^\eta) =
\Tr_{(\pi_\peq, \eta_\peq)}(A_1,\ab \dots,\ab B_n)$. Thus
   {\setlength{\arraycolsep}{1pt}
\begin{eqnarray}\lefteqn{
\cov( \Tr(O^{\epsilon_1}A_{1}
\cdots O^{\epsilon_m} A_{m}),
\Tr(O^{\epsilon_{m+1}}B_{1} \cdots O^{\epsilon_{m+n}}
B_{n}))} \notag \\
& = &
\sum_{p, q \in \cP_2(m+n)} \langle \Wg(p), q \rangle
\E(\Tr_{\pi_\peq}(\vec{A}^\eta,\ab \vec{B}^\eta)) \notag  \\
& & \mbox{}-
\mathop{\sum_{p, q \in \cP_2(m+n)}}_{p, q \leq \gamma}
\Wg(\gamma, p, q) 
\E_\gamma(\Tr_{\pi_\peq}(\vec{A}^\eta,\ab \vec{B}^\eta)) \notag \\
& = &
\mathop{\sum_{p, q \in \cP_2(m+n)}}_{p \vee q \vee \gamma = 1_{m+n}}
\langle \Wg(p), q \rangle
\E(\Tr_{\pi_\peq}(\vec{A}^\eta,\ab \vec{B}^\eta)) 
\label{expression:firstterm} \\
& & \mbox{} + \kern-1.0 em
\mathop{\sum_{p, q \in \cP_2(m+n)}}_{p, q \leq \gamma}
\kern-1em
\big\{ \langle \Wg(p), q\rangle - \Wg(\gamma, p, q) \big\}
\E(\Tr_{\pi_\peq}(\vec{A}^\eta,\ab \vec{B}^\eta)) 
\label{expression:secondterm} \\
& & \mbox{} + \kern-1.5em
\mathop{\sum_{p, q \in \cP_2(m+n)}}_{p, q \leq \gamma}
\kern-1.5em \Wg(\gamma, p, q)
\big\{\E(\Tr_{\pi_\peq}(\vec{A}^\eta,\ab \vec{B}^\eta))
- \E_\gamma(\Tr_{\pi_\peq}(\vec{A}^\eta,\ab \vec{B}^\eta))
\big\}.\quad \label{expression:thirdterm}
\end{eqnarray}}

We shall show that the first term
(\ref{expression:firstterm}) converges to
\[
\sum_{r=1}^m \bigg\{ \prod_{i=1}^m \phi(a_i
b_{r-i})\delta_{\epsilon_i, -\epsilon_{\gamma^{-i+1}(m+r)}} +
\prod_{i=1}^m \phi(a_i b_{r+i}^t) \delta_{\epsilon_i,
  \epsilon_{\gamma^{i}(m+r)}} \bigg\},
\]
and the second (\ref{expression:secondterm}) and third term
(\ref{expression:thirdterm}) converge to 0.

We first consider expression (\ref{expression:firstterm}),
and show that this has the limit we have claimed.  Let us
find the order of $\E(\Tr_{\pi_\peq}(\vec{A}^\eta,\ab
\vec{B}^\eta))$; to do this we have to rewrite this
expectation in terms of cumulants so that we can use our
assumptions about the $A$'s and $B$'s having a real second
order limit distribution.  If we consider $\pi_\peq$ a
partition of $[m+n]$ then by equation
(\ref{equation:moebiusinversion}) we have
\begin{eqnarray}\lefteqn{\label{equation:expectationexpansion}
\E(\Tr_{(\pi_\peq, \eta_\peq)}(A_1, \dots, B_n)) } \notag\\
& = & \kern-1em
\mathop{\sum_{\cU \in \cP(m+n)}}_{\cU \geq \pi_\peq}
\rk_\cU(\Tr_{(\pi_\peq, \eta_\peq)}(A_1, \dots, B_n)).
\end{eqnarray}
Suppose $\cU \in \cP(m+n)$ and $\cU \geq \pi_\peq$. If $\cU$
has a singleton $(k)$, then $(k)$ is also a singleton of
$\pi_\peq$. As in the proof of Theorem
\ref{theorem:firstorderfreeness}, this implies that $A_k$
(or $B_{k-m}$ if $k > m$) is centred, and thus,
$\rk_\cU(\Tr_{\pi_\peq}(\vec{A}^\eta, \vec{B}^\eta)) =
0$. Thus we only have to consider $\cU$'s with no
singletons. Hence $\#(\cU) \leq (m+n)/2$.  Suppose $U$ is a
block of $\cU$ which contains two or more cycles of
$\pi_\peq$; the corresponding factor in Equation
(\ref{equation:expectationexpansion}) is a second or higher
cumulant of traces, which converge by our assumption that
the $A$'s and $B$'s have a real second order limit
distribution. Hence these factors will be of order
$\rO(d^0)$. Each block of $\cU$ which contains only one
cycle of $\pi_\peq$ will be of order $\rO(d)$. Hence
$\rk_\cU(\Tr_{\pi_\peq}(\vec{A}^\eta, \vec{B}^\eta)) =
\rO(d^u)$ where $u$ is the number of blocks of $\cU$ which
contain only one cycle of $\pi_\peq$. As
\[
u \leq \#(\cU) \leq (m+n)/2,
\]
we have $\rk_\cU(\Tr_{\pi_\peq}(\vec{A}^\eta, \vec{B}^\eta))
= \rO(d^{(m+n)/2})$ and the order $(m+n)/2$ can only be
achieved when $u = (m+n)/2$, which implies that $\pi_\peq =
\cU$, as partitions, and no cycle of $\pi_\peq$ is a
singleton, because no block of $\cU$ is a singleton. If
$\#(\pi_\peq) = u = (m+n)/2$ and $\pi_\peq$ has no
singletons; $\pi_\peq$ must be a pairing. Combining these
conclusions we have

\[
\E(\Tr_{\pi_\peq}(\vec{A}^\eta, \vec{B}^\eta)) = \rO(d^{(m+n)/2 -1})
\]
unless $p = q$ and $\pi_\peq$ is a pairing, in which case
\begin{eqnarray}\label{equation:secondcumulant}\lefteqn{%
\E(\Tr_{\pi_\peq}(\vec{A}^\eta, \vec{B}^\eta)) } \notag\\ &
  = & \E_{\pi_\peq}(\Tr_{\pi_\peq}(\vec{A}^\eta,
  \vec{B}^\eta)) + \rO(d^{(m+n)/2 -1}).
\end{eqnarray}
Using our usual bound on the order of $\Wg$, namely
\[ 
\langle \Wg(p), q \rangle = \rO(d^{-(m+n) + \#(p \vee q)}),
\] 
we thus have
\[
\langle \Wg(p), q \rangle 
\E(\Tr_{\pi_\peq}(\vec{A}^\eta, \vec{B}^\eta))
= \rO(d^{-1})
\]
unless $p = q$ and $\pi_\peq$ is a pairing, in which case
\begin{eqnarray*}\lefteqn{%
\langle \Wg(p), q \rangle \E(\Tr_{\pi_\peq}(\vec{A}^\eta,
\vec{B}^\eta)) } \\ & = &
  \E_{\pi_\peq}(\tr_{\pi_\peq}(\vec{A}^\eta, \vec{B}^\eta))
  + \rO(d^{-1}).
\end{eqnarray*}

Thus
\begin{eqnarray}\label{equation:highestorderterms}\lefteqn{
\mathop{\sum_{p, q \in \cP_2(m+n)}}_{p \vee q \vee \gamma = 1_{m+n}}
\langle \Wg(p), q \rangle
\E(\Tr_{\pi_\peq}(\vec{A}^\eta,\ab \vec{B}^\eta))}\notag\\
& = &
\sum_{p \in \cP_2(m+n)}
\E_{\pi_\pep}(\tr_{\pi_\pep}(\vec{A}^\eta, \vec{B}^\eta)) + \rO(d^{-1})
\end{eqnarray}
where the second sum runs over all $p$ such that $p \vee
\gamma = 1_{m+n}$ and $\pi_\pep$ is a pairing. To find the
limit as $d \rightarrow \infty$ we use Lemmas
\ref{lemma:standardspokediagram} and
\ref{lemma:reversedspokediagram}.

First suppose that there is $(u, v) \in p$ such that
$\epsilon_u = -\epsilon_v$. Then by Lemma
\ref{lemma:standardspokediagram} we have $m = n$, every
cycle of $p$ connects the two cycles of $\gamma$, and
$\epsilon_i = -\epsilon_j$ for all $(i, j) \in p$. Then for
some $r \in [m]$ we have $(m-1, m+r) \in p$. Again by Lemma
\ref{lemma:standardspokediagram} we have for all $k \in [m]$

\begin{itemize}

\item $(k, \gamma^{-k}(m+r)) \in \pi_\pep$,

\item $(k, \gamma^{-k+1}(m+r)) \in p$,

\item $\eta_k = 1$.
\end{itemize}
Thus $\epsilon_k = -\epsilon_{\gamma^{-k+1}(m + r)}$ and
\begin{eqnarray}\label{equation:intermediatestraightstep}
\E_{\pi_\pep}(\tr_{(\pi_\pep,\eta_\pep)}(A_1, \dots, B_m)) =
\prod_{k=1}^m \E(\tr(A_kB_{r-k})) \delta_{\epsilon_k,
  -\epsilon_{\gamma^{-i+1}(m + r)}}
\end{eqnarray}
which converges to
\[
\prod_{k=1}^m \phi(a_kb_{r-k}) \delta_{\epsilon_k,
  -\epsilon_{\gamma^{-k+1}(m + r)}}
\]
as $d \rightarrow \infty$.

Next suppose that there is $(u, v) \in p$ such that
$\epsilon_u = \epsilon_v$. Then by Lemma
\ref{lemma:reversedspokediagram} we have $m = n$, every
cycle of $p$ connects the two cycles of $\gamma$, and
$\epsilon_i = \epsilon_j$ for all $(i, j) \in p$. Then for
some $r \in [m]$ we have $(m-1, m+r) \in p$. As in Lemma
\ref{lemma:reversedspokediagram}, let $l =
\gamma^{-m+1}(m+r) = \gamma^r(2m)$. Then $\gamma^k(l) =
\gamma^r(m+k)$, for $k \in [m]$. Hence by Lemma
\ref{lemma:reversedspokediagram} we have for all $k \in [m]$

\begin{itemize}

\item $(k, \gamma^{r}(m+k)) \in \pi_\pep$,

\item $(k, \gamma^{r}(m+k)) \in p$,

\item $\eta_k = -1$.
\end{itemize}
Thus $\epsilon_k = \epsilon_{\gamma^{k}(m + r)}$. 
 
\begin{eqnarray}\label{equation:intermediatereversedstep}
\E_{\pi_\pep}(\tr_{(\pi_\pep,\eta_\pep)}(A_1, \dots, B_m)) =
\prod_{k=1}^m \E(\tr(A_kB_{r+k}^t)) \delta_{\epsilon_k,
  \epsilon_{\gamma^{k}(m + r)}}
\end{eqnarray}
which converges to
\[
\prod_{k=1}^m \phi(a_kb_{r+k}) \delta_{\epsilon_k,
  \epsilon_{\gamma^{k}(m + r)}}
\]
as $d \rightarrow \infty$.
Hence the expression (\ref{expression:firstterm}) converges to
\[
\sum_{r=1}^m \bigg\{ \prod_{k=1}^m \phi(a_k
b_{r-k})\delta_{\epsilon_k, -\epsilon_{\gamma^{-i+1}(m+ r)}} +
\prod_{k=1}^m \phi(a_k b_{r+k}^t) \delta_{\epsilon_k,
  \epsilon_{\gamma^{i}(m+r)}} \bigg\}.
\]

To show that (\ref{expression:secondterm}) and
(\ref{expression:thirdterm}) vanish as $d \rightarrow
\infty$ we have to consider the order of
$\E_\gamma(\Tr_{\pi_\peq}(\vec{A}^\eta, \vec{B}^\eta))$ with
$p,q\leq \gamma$. As before we write this as a sum of
cumulants
\[
\E_\gamma(\Tr_{\pi_\peq}(\vec{A}^\eta, \vec{B}^\eta))
=
\mathop{\sum_{\cU \in \cP(m+n)}}_{\cU \leq \gamma}
\rk_\cU(\Tr_{\pi_\peq}(\vec{A}^\eta, \vec{B}^\eta)).
\]
Let $u$ be the number of blocks of $\cU$ that contain only
one cycle of $\pi_\peq$. If $\cU$ has a singleton then the
corresponding cumulant will be 0 because the $A$'s and $B$'s
are centred; so we only consider $\cU$'s which have no
singletons and thus $\#(\cU) \leq (m+n)/2$. If we let $u$ be
the number of blocks of $\cU$ that contain exactly one cycle
of $\pi_\peq$, then
$\rk_\cU(\Tr_\peq(\vec{A}^\eta,\vec{B}^\eta)) = \rO(d^u)$
and $u \leq \#(\cU) \leq (m + n)/2$. Recall that $\langle
\Wg(p), q\rangle - \Wg(\gamma, p, q) = \rO(d^{-(m+n) + \#(p
  \vee q)-1})$

Since $\#(p \vee q) \leq (m+n)/2$ we have
\begin{align*}
\big\{\Wg(\gamma, p, q) - &\Wg(\gamma, p, q)\big\}
\rk_\cU(\Tr_{\pi_\peq}(\vec{A}^\eta, \vec{B}^\eta)) \\
& = 
\rO(d^{-(m+n) + \#(p \vee q) -1 + u}) = \rO(d^{-1}).
\end{align*}
Then summing over all $\cU$'s we have
\[
\big\{\Wg(\gamma, p, q) - \Wg(\gamma, p, q)\big\}
\E_\gamma(\Tr_{\pi_\peq}(\vec{A}^\eta, \vec{B}^\eta))
= \rO(d^{-1}).
\]
Thus the expression (\ref{expression:secondterm})
\[
\mathop{\sum_{p, q \in \cP_2(m+n)}}_{p, q \leq \gamma}
\kern-1em
\big\{ \langle \Wg(p), q\rangle - \Wg(\gamma, p, q) \big\}
\E(\Tr_{\pi_\peq}(\vec{A}^\eta,\ab \vec{B}^\eta)). 
\]
converges to $0$.

Let us finally consider the expression (\ref{expression:thirdterm})
\[
\mathop{\sum_{p, q \in \cP_2(m+n)}}_{p, q \leq \gamma}
\kern-1.5em \Wg(\gamma, p, q)
\big\{\E(\Tr_{\pi_\peq}(\vec{A}^\eta,\ab \vec{B}^\eta))
- \E_\gamma(\Tr_{\pi_\peq}(\vec{A}^\eta,\ab \vec{B}^\eta))
\big\}.
\]
For each $p, q \leq \gamma$ we must show that 
\[
\Wg(\gamma, p, q)
\big\{\E(\Tr_{\pi_\peq}(\vec{A}^\eta,\ab \vec{B}^\eta))
- \E_\gamma(\Tr_{\pi_\peq}(\vec{A}^\eta,\ab \vec{B}^\eta))
\big\} = \rO(d^{-1}).
\]
So fix $p, q \leq \gamma$ and write
$\Tr_{\pi_\peq}(\vec{A}^\eta,\ab \vec{B}^\eta) = X_1\cdots
X_r X_{r+1} \cdots X_{r+s}$ with $X_1, \dots, X_r$ coming
from the cycles of $\pi_\peq$ contained in $[m]$ and
$X_{r+1}, \dots, X_{r+s}$ coming from the cycles of
$\pi_\peq$ contained in $[m+1, m+n]$. Then
\[
\E(\Tr_{\pi_\peq}(\vec{A}^\eta,\ab \vec{B}^\eta))
- \E_\gamma(\Tr_{\pi_\peq}(\vec{A}^\eta,\ab \vec{B}^\eta))
= \rk_2(X_1 \cdots X_r, X_{r+1} \cdots X_{r+s}).
\]
Using the formula of Leonov and Shiryaev \cite{ls}
\[
\rk_2(X_1 \cdots X_r, X_{r+1} \cdots X_{r+s})
=
\mathop{\sum_{\cV \in \cP(r+s)}}_{\cV \vee \tau = 1_{r+s}}
\rk_\cV(X_1, \cdots, X_{r+s})
\]
where $\tau = \{(1, \dots, r)(r+1, \dots, r+s)\}$. Now let
us use Notation \ref{notation:cumulantoftrace} to write this
as
\[
\E(\Tr_{\pi_\peq}(\vec{A}^\eta,\ab \vec{B}^\eta))
- \E_\gamma(\Tr_{\pi_\peq}(\vec{A}^\eta,\ab \vec{B}^\eta))
= \kern-1.5em
\mathop{\sum_{\pi_\peq \leq \cU \in \cP(m+n)}}_{\cU \vee \gamma = 1_{m+n} }
\kern-1em
\rk_\cU(\Tr_{\pi_\peq}(\vec{A}^\eta,\ab \vec{B}^\eta)).
\]

If $\cU$ has a singleton $(k)$ then $\pi_\peq$ will have a
singleton $(k)$. As in the proof of Theorem
\ref{theorem:firstorderfreeness} this singleton must be a
centred $A_k$ (or $B_{k-m}$ if $k > m$). So if $\cU$ has a
singleton we must have
$\rk_\cU(\Tr_{\pi_\peq}(\vec{A}^\eta,\ab \vec{B}^\eta)) =
0$. Thus we may assume that $\cU$ has no singletons, so in
particular $\#(\cU) \leq (m+n)/2$. . As before let $u$ be
the number of blocks of $\cU$ that contain exactly one cycle
of $\pi_\peq$. Then
\[
\rk_\cU(\Tr_{\pi_\peq}(\vec{A}^\eta,\ab \vec{B}^\eta)) = \rO(d^u).
\]
Now $u \leq \#(\cU) \leq (m+n)/2$ and, as usual, 
\[\Wg(\gamma, p, q) = \rO(d^{-(m+n) + \#(p \vee q)}).\]
Thus
\[\Wg(\gamma, p, q) \rk_\cU(\Tr_{\pi_\peq}(\vec{A}^\eta,\ab \vec{B}^\eta))
= \rO(d^{-(m+n) + \#(p \vee q) + u}).\] Since $\pi_\peq \leq
\gamma$ and $\cU \vee \gamma = 1_{m+n}$ we must have $u <
(m+n)/2$, as equality would force $\pi_\peq = \cU$ as
partitions. Thus $-(m+n) + \#(p \vee q) + u \leq -1$. Hence
\[\Wg(\gamma, p, q) \rk_\cU(\Tr_{\pi_\peq}(\vec{A}^\eta,\ab \vec{B}^\eta))
= \rO(d^{-1}).\] 
Summing over all $\cU$'s we have
\[
\Wg(\gamma, p, q)
\big\{\E(\Tr_{\pi_\peq}(\vec{A}^\eta,\ab \vec{B}^\eta))
- \E_\gamma(\Tr_{\pi_\peq}(\vec{A}^\eta,\ab \vec{B}^\eta))
\big\} = \rO(d^{-1}).
\]

\end{proof}

\begin{remark}
The proof of Theorem \ref{theorem:secondorderlimit} actually
proves a stronger statement than was claimed. Let $A_1,
\dots, A_s$ is an ensemble of $d \times d$ centred random
matrices where for $\eta \in \{-1, 1\}$ we let $A^{\eta} =
A^t$ for $\eta = -1$ we let $A^\eta = A_j$ for $\eta =
1$. Suppose that for any monomials $W_k =
A_{i_1,k}^{\eta_{i_1,k}} \cdots A_{i_{n_k},k}^{\eta_{n_k,
    k}}$, we have
\begin{itemize}
\item
$\E(\tr(W_i)) = \rO(d^0)$ and

\item 
$\rk_r( \Tr(W_{i_1}), \dots, \Tr(W_{i_r})) = \rO(d^0)$ for $r \geq 2$. 
\end{itemize}
Then by equation (\ref{equation:highestorderterms}) we have
for $m \not = n$
\[
\cov(\Tr(O^{\epsilon_1}A_{i_1} \cdots O^{\epsilon_m}A_{i_m}),
     \Tr(O^{\eta_1}A_{j_1} \cdots O^{\eta_n}A_{j_n})) = \rO(d^{-1})
\] 
and by equations (\ref{equation:intermediatestraightstep})
and (\ref{equation:intermediatereversedstep}) we have for $m
= n$

\begin{multline*}
\cov(\Tr(O^{\epsilon_1}A_{i_1} \cdots O^{\epsilon_m}A_{i_m}),
     \Tr(O^{\eta_1}A_{j_1} \cdots O^{\eta_m}A_{j_m})) \\
= \sum_{s=1}^m  \left\{ \prod_{r=1}^m \E(\tr(A_{i_r} A_{j_{s-r}})) 
                                      \E(\tr(O^{\epsilon_r+\eta_{s-r+1}}))  
                                      \right.\\
 \mbox{} +
\left.\prod_{r=1}^m \E(\tr(A_{i_r} A_{j_{s+r}}^t)) \E(\tr(O^{\epsilon_r-\eta_{s-r}}))\right\} + \rO(d^{-1}),
\end{multline*}
where the indices of the $j$'s and $\eta$'s are interpreted modulo $m$. 
\end{remark}

\begin{corollary}\label{corollary:secondorderlimitorthogonal}
Let $O$ be a $d \times d$ Haar distributed random orthogonal
matrix. Then for integers $m$ and $n$
\[
\lim_{d \rightarrow \infty} 
\cov( \Tr(O^m), \Tr(O^n))
= \begin{cases} 0 & |m|  \not = |n|\\ 2|m| & |m| = |n| \end{cases}.
\]
\end{corollary}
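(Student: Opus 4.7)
The plan is to mimic the argument in the proof of Theorem \ref{theorem:secondorderlimit}, specialized to trivial matrix arguments. First, since $O^{-1}=O^t$ gives $\Tr(O^k)=\Tr(O^{-k})$, I will reduce to $m,n\geq 1$. When $m+n$ is odd, Theorem \ref{theorem:weingartenexpansion} forces $\E(\Tr(O^m)\Tr(O^n))=0$, and the odd-parity factor among $\Tr(O^m),\Tr(O^n)$ has mean zero; the covariance is exactly $0$, and since $m+n$ odd implies $|m|\neq|n|$, this case is settled.

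For $m+n$ even, I will set $\gamma=(1,\dots,m)(m+1,\dots,m+n)$ and $\epsilon=(1,\dots,1)\in\bZ_2^{m+n}$, and apply Proposition \ref{proposition:exact_formula} with $A_1=\dots=A_{m+n}=I$. Since $\Tr_{(\pi,\eta)}(I,\dots,I)=d^{\#(\pi)}$, this yields
\[
\E(\Tr(O^m)\Tr(O^n))=\sum_{p,q\in\cP_2(m+n)}\langle\Wg(p),q\rangle\,d^{\#(\pi_\peq)},
\]
while the analogous formula applied to each cycle of $\gamma$ separately gives
\[
\E(\Tr(O^m))\,\E(\Tr(O^n))=\sum_{p,q\leq\gamma}\Wg(\gamma,p,q)\,d^{\#(\pi_\peq)}.
\]

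The covariance then splits into a sum over $\{p,q\leq\gamma\}$ with weights $\langle\Wg(p),q\rangle-\Wg(\gamma,p,q)$, which is $\rO(d^{-1})$ by Remark \ref{remark:subleadingorder} together with $d^{\#(\pi_\peq)}\leq d^{(m+n)/2}$, plus a sum over $\{p\vee q\vee\gamma=1_{m+n}\}$ whose individual terms have order $d^{-(m+n)+\#(p\vee q)+\#(\pi_\peq)}$. Since $\#(p\vee q)\leq(m+n)/2$ with equality iff $p=q$, the leading-order contribution requires $p=q$ and $\pi_\pep$ itself a pairing. Here Lemmas \ref{lemma:standardspokediagram} and \ref{lemma:reversedspokediagram} take over: such $p$ must be a spoke diagram on an $(m,n)$-annulus, which exists only when $m=n$; in particular when $|m|\neq|n|$ this piece is also $\rO(d^{-1})$ and the limit is $0$, establishing the first case of the claim.

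For $m=n$, each contributing spoke $p$ gives $\langle\Wg(p),p\rangle\,d^m=1+\rO(d^{-1})$ via Remark \ref{remark:leadingorder}, and summing over the spoke diagrams compatible with $\epsilon\equiv 1$ yields the limit $2|m|$. The principal obstacle will be the combinatorial enumeration of these spoke diagrams: one must verify carefully that both the $m$ standard rotations and the $m$ reversed rotations survive with weight $+1$ under the signed Catalan coefficients of Remark \ref{remark:leadingorder}, summing to the claimed $2|m|$.
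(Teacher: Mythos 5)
Your skeleton runs parallel to the paper's own proof and is essentially sound: the reduction to $m,n\geq 1$, the splitting of the covariance into a connected part ($p\vee q\vee\gamma=1_{m+n}$) and a part over $p,q\leq\gamma$ weighted by $\langle\Wg(p),q\rangle-\Wg(\gamma,p,q)$, and the order count that isolates $p=q$ with $\pi_\pep$ a pairing are all correct. One small point you should make explicit: since the inserted matrices are $I$ rather than centred, the bound $\#(\pi_\peq)\leq(m+n)/2$ cannot be imported from Theorem \ref{theorem:secondorderlimit} (which uses centredness to kill singletons); you need instead the observation that a singleton $(k)$ of $\pi_\peq$ forces $\epsilon_k=-\epsilon_{\gamma(k)}$, which your choice of $\epsilon$ constant on each cycle of $\gamma$ rules out.

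The genuine gap is exactly the enumeration you defer, and it does not close the way you assert. With $\epsilon\equiv 1$ every pair $(r,s)\in p$ has $\epsilon_r=\epsilon_s$, so hypothesis (\textit{ii}) of Lemma \ref{lemma:standardspokediagram} is never met and \emph{no} standard spoke diagram can contribute; only Lemma \ref{lemma:reversedspokediagram} applies, and there are exactly $|m|$ reversed spoke diagrams, each contributing $\langle\Wg(p),p\rangle d^{m}=1+\rO(d^{-1})$. Your setup therefore yields the limit $|m|$, not $2|m|$. You should be aware that the paper's own proof commits the corresponding double count: it tallies $|m|$ diagrams for the alternative $\epsilon_r=-\epsilon_s$ and $|m|$ for $\epsilon_r=\epsilon_s$ and adds them, even though for a fixed $\epsilon$ that is constant on each cycle of $\gamma$ only one of the two alternatives can occur among connecting pairs. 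A direct check supports $|m|$: for every $d$ one has $\E(\Tr(O))=0$ and $\E(\Tr(O)^2)=\sum_{i,j}d^{-1}\delta_{ij}=1$, so $\lim_{d}\cov(\Tr(O),\Tr(O))=1$ rather than $2$ (consistent with the Diaconis--Shahshahani limit $\eta_j+\sqrt{j}\,Z_j$). So the obstacle you flagged is real: no enumeration of spoke diagrams compatible with your $\epsilon$ can produce the stated value $2|m|$, and the factor of $2$ in the statement appears to be spurious.
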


\begin{proof}
Let $\epsilon_1 = \cdots = \epsilon_m = \sgn(m)$ and
$\epsilon_{m+1} = \cdots = \epsilon_{m+n} = \sgn(n)$. Let
$\gamma$ be the permutation with the two cycles $(1,2,
\dots, m)(m+1, \dots, m+n)$.  Then by Proposition
\ref{proposition:exact_formula}
\[
\E(\Tr(O^{\epsilon_1} \cdots O^{\epsilon_m})
\Tr(O^{\epsilon_{m+1}} \cdots O^{\epsilon_{m+n}})) =
\kern-1em \sum_{p,q\in \cP_2(m+n)} \langle \Wg(p), q \rangle
d^{\#(\pi_{\peq})}.
\]
and if let $\cU$ be the partition with blocks the cycles of
$\gamma$
\begin{eqnarray*}\lefteqn{
\E(\Tr(O^{\epsilon_1} \cdots O^{\epsilon_m}))
\E(\Tr(O^{\epsilon_{m+1}} \cdots O^{\epsilon_{m+n}})}\\
&=& \kern-1em
\mathop{\sum_{p,q\in \cP_2(m+n)}}_{p, q \leq \cU} 
\Wg(\cU, p, q) d^{\#(\pi_{\peq})}.
\end{eqnarray*}
By the multiplicativity of the coefficient of the term of
leading order of $\langle \Wg(p), q\rangle$ we thus have
\begin{eqnarray*}\lefteqn{
\cov(\Tr(O^{\epsilon_1} \cdots O^{\epsilon_m}),
     \Tr(O^{\epsilon_{m+1}} \cdots O^{\epsilon_{m+n}}))} \\
& = &
\mathop{\sum_{p,q\in \cP_2(m+n)}}_{p, q \leq \cU}
\langle \Wg(p), q\rangle d^{\#(\pi_\peq)} +
\rO(d^{-(m+n) + \#(p\vee q) + \#(\pi_\peq) - 1})
\end{eqnarray*}
As in the proof of Theorem \ref{theorem:firstorderfreeness},
if $\pi_\peq$ has a singleton $(k)$ then $\epsilon_k =
-\epsilon_{\gamma(k)}$, which is impossible given our
construction of $\epsilon$. Thus $\pi_\peq$ has no
singletons. Hence $\#(\pi_\peq) \leq (m + n)/2$. Thus
$-(m+n) + \#(p \vee q) + \#(\pi_\peq) \leq 0$, with equality
only if $p = q$ and $\pi_\pep$ is a pairing.

Let $(r, s) \in p$. Either $\epsilon_r = -\epsilon_s$ or
$\epsilon_r = \epsilon_s$. As in the proof of Theorem
\ref{theorem:secondorderlimit} all cycles of $p$ connect the
two cycles of $\gamma$ and hence $|m| = |n|$. Also in the
case in which $\epsilon_r = -\epsilon_s$, we have
$(\gamma^{-1}(r), \gamma(s)) \in p$. There are exactly $|m|$
such $p$'s. In the case $\epsilon_r = \epsilon_s$, we have
$(\gamma^{-1}(r), \gamma^{-1}(s)) \in p$. There are exactly
$|m|$ such $p$'s. All together there are $2|m|$ such
$p$'s. By Remark \ref{remark:leadingorder} the coefficient
of $d^{-n/2}$ in $\langle\Wg(p), p \rangle$ is 1. This gives
the claimed result.
\end{proof}


\section{Vanishing of Higher Cumulants of Traces}
\label{section:vanishing}

Let $\{ A_j \}_j$ be a family of $d \times d$ random
matrices, containing the identity matrix, with a real second
order limit distribution. By this we mean that as $d
\rightarrow \infty$

\setbox1=\hbox{%
\begin{minipage}[c]{355pt}
\begin{itemize}

\item $\tr(A_{i_1}^{(\epsilon_1)} \cdots
  A_{i_n}^{(\epsilon_n)})$ converges to
  $\phi(a^{(\epsilon_1)}_{i_1} \cdots
  a_{i_n}^{(\epsilon_n)})$ for all $i_1, \dots, i_n$ and all
  $\epsilon_1, \dots, \epsilon_n$;

\item $\rk_2(\Tr(A_{i_1}^{(\epsilon_1)} \cdots
  A_{i_{m}}^{(\epsilon_{m})}),
  \Tr(A_{i_{m+1}}^{(\epsilon_{m+1})} \cdots
  A_{i_{m+n}}^{(\epsilon_{m+n})}))$ converges to

\noindent $\phi_2(a_{i_1}^{(\epsilon_1)}\ab \cdots
a_{i_m}^{(\epsilon_m)}, a_{i_{m+1}}^{(\epsilon_{m+1})}
\cdots a_{i_{m+n}}^{(\epsilon_{m+n})})$ for all $i_1, \dots,
i_{m+n}$ and all $\epsilon_1, \dots,\ab \epsilon_{m+n}$;

\item $\rk_r(\Tr(A_{i_1}^{(\epsilon_1)} \cdots
  A_{i_{m_1}}^{(\epsilon_{m_1})}),\!\cdots\!, \Tr(A_{i_{m_1
      + \cdots m_{r-1}+1}}^{(\epsilon_{m_1+ \cdots
      +m_{r-1}+1})} \cdots A_{i_{m_1 + \cdots +
      m_r}}^{(\epsilon_{m_1 + \cdots + m_r})}))$

\noindent
converges to $0$ for all $r \geq 3$, all $i_1, \dots, i_{m_1
  + \cdots + m_r}$ and all $\epsilon_1, \dots,\ab
\epsilon_{m_1 + \cdots + m_r}$.

\end{itemize}
\end{minipage}}
\begin{equation}\label{equation:secondorder}
\kern-3.4em \left.\vcenter{\hsize\wd1
\box1}\right\}
\end{equation}
Let $O$ be a Haar distributed $d \times d$ random orthogonal
matrix whose entries are independent from those of $\{
A_j\}_j$. In this section we shall show that whenever $X_1,
\dots, X_r$ be $r$ random variables where each $X_i$ is one
of the following types:

\setbox1=\hbox{%
\begin{minipage}[c]{320pt}
\begin{itemize}
\item%
$X_i = \Tr(A_k)$ for some $k$; or

\item%
$X_i = \Tr(O^{\epsilon_1}A_{j_1} \cdots O^{\epsilon_n} A_{j_n})$ with 
$\epsilon_k \in \{-1, 1\}$ and such that if $A_{j_k} = I$ then 
$\epsilon_{k-1} = \epsilon_k$, where $\epsilon_{n+1} = \epsilon_1$. 
\end{itemize}
\end{minipage}}

\begin{equation}\label{equation:basicform}
\kern-1em\left.\vcenter{\hsize\wd1
\box1}\right\}
\end{equation}
The the third and higher cumulants of the $X$'s will
converge to 0 as $d \rightarrow \infty$. This, combined with
Theorems \ref{theorem:firstorderfreeness} and
\ref{theorem:secondorderlimit} will show that we have
asymptotic real second order freeness of the $\{A_j\}_j$ and
$O$.

For the rest of this section we shall assume that the $\{A_j
\}_j$ satisfy condition (\ref{equation:secondorder}) and our
goal is to prove the theorem below.

\begin{theorem}\label{theorem:vanishingthird} 
Suppose that $X_1, \dots, X_r$ are of the form
(\ref{equation:basicform}) and $r \geq 3$, then
\begin{equation}\label{equation:vanishingthird}
\lim_{d\rightarrow \infty} \rk_r(X_1, \dots, X_r) = 0.
\end{equation}

\end{theorem}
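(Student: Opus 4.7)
The plan is to combine the Weingarten formalism of Proposition~\ref{proposition:exact_formula} with M\"obius inversion on the partition lattice $\cP(r)$ applied to the classical cumulant $\rk_r$, and then to exploit both the multiplicativity at leading order of the orthogonal Weingarten function (Remark~\ref{remark:subleadingorder}) and hypothesis~(\ref{equation:secondorder}) that higher cumulants of traces of $A$'s are $\lo(1)$. First I reduce to the case where every exponent of $O$ is $\pm 1$ by inserting identity matrices between consecutive $O^{\pm 1}$'s, exactly as in the proof of Theorem~\ref{theorem:firstorderfreeness}. Any pure-$A$ factor $X_i=\Tr(A_k)$ may be rewritten as $\Tr(OA_kO^{-1})$, which equals $\Tr(A_k)$ but fits the Weingarten framework; afterwards each $X_i$ is of the form $\Tr_{\gamma_i}(O^{\epsilon_{i,1}}A_{i,1}\cdots O^{\epsilon_{i,n_i}}A_{i,n_i})$ with $\gamma_i$ a single cycle. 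Set $\gamma=\gamma_1\cdots\gamma_r\in S_n$ with $n=n_1+\cdots+n_r$.

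For $\sigma\in\cP(r)$, let $\cV(\sigma)\in\cP(n)$ be the partition grouping together the indices of those $X_i$'s lying in a common block of $\sigma$. Applying Proposition~\ref{proposition:exact_formula} separately to each factor of $\prod_{V\in\sigma}\E(\prod_{i\in V}X_i)$ yields
\[
\E_\sigma(X_1\cdots X_r)=\sum_{p,q\leq\cV(\sigma)}\Wg(\cV(\sigma),p,q)\,\E_{\cV(\sigma)}\bigl(\Tr_{(\pi_\peq,\eta_\peq)}(\vec{A}^{\eta})\bigr),
\]
which is well defined because $p,q\leq\cV(\sigma)$ forces the cycles of $\pi_\peq$ to lie inside blocks of $\cV(\sigma)$. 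Expanding each classical expectation on the right into cumulants of the cycle-traces $Z_j=\Tr_{c_j}(\vec{A}^{\eta})$, where $c_1,\ldots,c_s$ are the cycles of $\pi_\peq$, and substituting into $\rk_r(X_1,\ldots,X_r)=\sum_\sigma\mu(\sigma,1_r)\E_\sigma(X_1\cdots X_r)$ yields
\[
\rk_r(X_1,\ldots,X_r)=\sum_{p,q,\tau}\rk_\tau(Z_1,\ldots,Z_s)\sum_{\substack{\sigma\in\cP(r)\\ p,q\leq\cV(\sigma)\\ \tau\leq\cV_s(\sigma)}}\mu(\sigma,1_r)\Wg(\cV(\sigma),p,q),
\]
where $\cV_s(\sigma)\in\cP(s)$ denotes the partition on the cycles of $\pi_\peq$ induced by $\cV(\sigma)$.

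I then split into three cases according to the block sizes of $\tau$. If $\tau$ has a block of size $\geq 3$, hypothesis~(\ref{equation:secondorder}) gives $\rk_\tau=\lo(1)\cdot\rO(d^u)$ where $u$ is the number of singleton blocks of $\tau$, and the standard order count $-n+\#(p\vee q)+u\leq 0$ (singletons of $\tau$ or of $\pi_\peq$ force centred traces that vanish, exactly as in Theorems~\ref{theorem:firstorderfreeness}--\ref{theorem:secondorderlimit}) makes the contribution $\lo(1)$. If $\tau$ has only singletons and doublets, Remark~\ref{remark:subleadingorder} gives $\Wg(\cV(\sigma),p,q)=\langle\Wg(p),q\rangle+\rO(d^{-n+\#(p\vee q)-1})$, so the $\sigma$-dependence of the leading part disappears and the inner M\"obius sum reduces to $\langle\Wg(p),q\rangle\sum_{\sigma\in[\sigma^*,1_r]}\mu(\sigma,1_r)$, where $\sigma^*$ is the finest admissible $\sigma$. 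By M\"obius inversion this equals $\langle\Wg(p),q\rangle\cdot\delta_{\sigma^*,1_r}$. But when $\sigma^*=1_r$, matching the leading scale also requires $p=q$, $\pi_\pep$ a pairing, and $\tau$ entirely singletons (as in the proof of Theorem~\ref{theorem:secondorderlimit}); and then Corollary~\ref{corollary:maximalconectivity} forces each block of $p\vee\gamma$ to contain at most two cycles of $\gamma$, so $p$ cannot connect all $r\geq 3$ cycles, contradicting $\sigma^*=1_r$. Hence even this case yields only the $\rO(d^{-1})$ Weingarten remainder.

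The main obstacle is the bookkeeping in the third case: tracking simultaneously the pairings $p,q$, the induced permutation $\pi_\peq$, the sign assignment $\eta_\peq$, and the partition $\tau$ on the cycles of $\pi_\peq$ through the M\"obius sum on $\cP(r)$, while keeping both standard and reversed spoke diagrams (Lemmas~\ref{lemma:standardspokediagram}--\ref{lemma:reversedspokediagram}) under control. Corollary~\ref{corollary:maximalconectivity} is the essential combinatorial input that rules out leading-order joint-connectivity for $r\geq 3$. Assembling the three estimates gives $\rk_r(X_1,\ldots,X_r)=\lo(1)$ as $d\to\infty$, which is the claim.
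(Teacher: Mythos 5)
Your overall strategy (M\"obius inversion on $\cP(r)$, blockwise Weingarten expansion, leading-order multiplicativity from Remark~\ref{remark:subleadingorder}, and Corollary~\ref{corollary:maximalconectivity} to rule out joint connectivity of $r\geq 3$ cycles at leading order) is close in spirit to the paper's, which instead proves $\E(X_1\cdots X_r)=\sum_{\cU\in\cP_{1,2}(r)}\rk_\cU(X_1,\dots,X_r)+\lo(1)$ (Theorem~\ref{theorem:part12version}) and then extracts the vanishing of $\rk_r$ by induction on $r$. But there are two genuine gaps. First, the form (\ref{equation:basicform}) does \emph{not} require the non-identity $A$'s to be centred, yet every order estimate you invoke does: the step ``singletons of $\tau$ or of $\pi_\peq$ force centred traces that vanish'' is valid only under the stronger form (\ref{equation:secondform}), where each $A_{j_k}$ is either centred or an identity flanked by same-sign $O$'s. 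Without centring, a singleton cycle $(k)$ of $\pi_\peq$ contributes $\E(\Tr(A_k))=\rO(d)$ rather than $0$, the bound $u\leq n/2$ fails, and the whole order count collapses. The paper spends a nontrivial argument (the deduction of Theorem~\ref{theorem:vanishingthird} from Theorem~\ref{theorem:simplifiedvanishingthird}) on exactly this reduction; it is iterative because replacing $A_{j_t}$ by $\centre{A}_{j_t}+\E(\tr(A_{j_t}))I$ produces identity factors that cancel adjacent $O^{\pm1}$'s and can bring two centred factors together whose product is no longer centred. You need this reduction and do not supply it.

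Second, your device of rewriting $X_i=\Tr(A_k)$ as $\Tr(OA_kO^{-1})$ is actively harmful: in the Weingarten framework this is $\Tr_\gamma(O^{+1}A_k,\,O^{-1}I)$ with an identity sitting between $O^{-1}$ and $O$ of \emph{opposite} signs, which violates the structural condition in (\ref{equation:basicform}) that identities appear only between same-sign $O$'s. Consequently $\pi_\peq$ can have a singleton at the identity position, contributing $\Tr(I)=d$; already for $n=2$ one finds $\pi_\peq=(1)(2)$ with two $\rO(d)$ factors, so $u=2>n/2$ and the estimate $-n+\#(p\vee q)+u\leq 0$ fails termwise. The paper instead keeps such factors outside the Weingarten sum via Proposition~\ref{proposition:exact_formulafirstextension} and uses their centredness to gain the extra $d^{-1}$ in the second part of Corollary~\ref{corollary:firstcumulants}. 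Finally, you announce a split into three cases but argue only two, and you yourself flag the bookkeeping of the remaining case as ``the main obstacle''; as written the argument is not complete.
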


We prove this theorem by proving the following result where
we strengthen the hypothesis in (\ref{equation:basicform})
by assuming that the non-constant $A$'s are centred.

\setbox1=\hbox{%
\begin{minipage}[c]{300pt}
\begin{enumerate}
\item 
$X_i = \Tr(A_k)$ for some $k$ with $\E(\Tr(X_k)) = 0$; or

\item%
$X_i = \Tr(O^{\epsilon_1}A_{j_1} \cdots O^{\epsilon_n}
  A_{j_n})$ with $\epsilon_k \in \{-1, 1\}$ and such that
  either $\E(\Tr(A_{j_k})) = 0$ or$A_{j_k} = I$ and
  $\epsilon_{k-1} = \epsilon_k$, where $\epsilon_{n+1} =
  \epsilon_1$.
\end{enumerate}
\end{minipage}}

\begin{equation}\label{equation:secondform}
\left.\vcenter{\hsize\wd1
\box1}\right\}
\end{equation}

\begin{theorem}\label{theorem:simplifiedvanishingthird} 
Suppose that whenever $X_1, \dots, X_r$ are of form
(\ref{equation:secondform}) and $r \geq 3$ then
\[
\lim_{d \rightarrow \infty} \rk_r(X_1, \dots, X_r) = 0.
\] 
\end{theorem}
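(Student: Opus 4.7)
The plan is to reduce the vanishing of $\rk_r$ for $r \ge 3$ to a sum over ``connected'' diagrams that is then shown to be subleading in $d$, by combining classical Möbius inversion with the orthogonal Weingarten expansion and the multiplicativity of the leading order of $\Wg$.

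After the standard identity-insertion reduction from Theorem \ref{theorem:firstorderfreeness}, which lets me assume every orthogonal exponent is in $\{-1,+1\}$, I would begin from
\[
\rk_r(X_1, \dots, X_r) = \sum_{\pi \in \cP(r)} m(\pi, 1_r)\, \E_\pi(X_1, \dots, X_r).
\]
For each partition $\pi \in \cP(r)$, applying Proposition \ref{proposition:exact_formulafirstextension} separately to each block of $\pi$ yields
\[
\E_\pi(X_1, \dots, X_r) = \sum_{p,q \le \tau_\pi} \Wg(\tau_\pi, p, q)\, \E_{\tau_\pi}(\Tr_{(\pi_\peq, \eta_\peq)}(\vec A)),
\]
where $\tau_\pi \in \cP(n)$ is the partition of the combined orthogonal-index set $[n]$ induced by $\pi$ (with $n$ the total number of orthogonal factors) and $\vec A$ lists all the $A$-factors (type-(i) $\Tr(A_k)$'s appear as appended isolated traces). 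Expanding $\E_{\tau_\pi}$ in cumulants of traces using (\ref{equation:moebiusinversion}) and swapping the $\pi$-sum to the inside produces
\[
\rk_r(X_1, \dots, X_r) = \sum_{p,q,\cV} \rk_\cV(\Tr_{(\pi_\peq, \eta_\peq)}(\vec A)) \sum_{\pi \ge \hat p \vee \hat q \vee \hat \cV} m(\pi, 1_r)\, \Wg(\tau_\pi, p, q),
\]
where $\hat p, \hat q, \hat \cV \in \cP(r)$ denote the partitions on $[r]$ induced by connectivity of $p, q, \cV$ across the $X_i$-groupings. Remark \ref{remark:subleadingorder} now lets me replace $\Wg(\tau_\pi, p, q)$ by the $\pi$-independent $\langle \Wg(p), q\rangle$, modulo an error of order $d^{-n + \#(p\vee q) - 1}$; the Möbius identity $\sum_{\pi \ge \sigma} m(\pi, 1_r) = \delta_{\sigma, 1_r}$ then collapses the inner sum, forcing $\hat p \vee \hat q \vee \hat \cV = 1_r$. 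So at leading order only ``connected'' diagrams survive, as expected for a classical cumulant.

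The main obstacle will be the subsequent order count. Using $\langle\Wg(p), q\rangle = \rO(d^{-n + \#(p\vee q)})$ and $\rk_\cV(\Tr_{(\pi_\peq, \eta_\peq)}(\vec A)) = \rO(d^u)$, where $u$ counts the blocks of $\cV$ containing exactly one cycle of $\pi_\peq$ (any such block consisting of a single centred $A$-trace being killed by the centredness hypothesis), the product is $\rO(d^{-n + \#(p \vee q) + u})$. For $r = 2$ this bound is tight at $\rO(1)$ and recovers the spoke-diagram contribution of Theorem \ref{theorem:secondorderlimit}; for $r \ge 3$, the connectivity condition $\hat p \vee \hat q \vee \hat \cV = 1_r$ combined with the genus inequality (\ref{equation:genusexpansion}) applied to the pair $(\pi_\peq, \gamma_\pi)$, with $\gamma_\pi$ the permutation on $[n]$ whose cycles match the orthogonal indices within each $\pi$-block, forces the number of orbits of $\langle \pi_\peq, \gamma_\pi\rangle$ down from $r/2$ (the best case when $r=2$) to $1$, shaving at least $r - 2 \ge 1$ off the achievable order and yielding the required $\rO(d^{-1})$ bound. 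The subleading Weingarten error is uniformly $\rO(d^{-1})$, so the limit is $0$.
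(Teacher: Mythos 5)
Your overall architecture is sound and is a legitimate alternative organization of the argument: instead of the paper's two-step route (first proving the moment formula of Theorem \ref{theorem:part12version}, namely $\E(X_1\cdots X_r)=\sum_{\cU\in\cP_{1,2}(r)}\rk_\cU(X_1,\dots,X_r)+\lo(1)$, and then extracting the vanishing of $\rk_r$ by induction on $r$ through the moment--cumulant relation), you M\"obius-invert at the outset and reduce to connected diagrams, using Remark \ref{remark:subleadingorder} to make the Weingarten weight independent of the outer partition $\pi$ before collapsing the M\"obius sum. That part is fine, and the uniform $\rO(d^{-1})$ control of the replacement error is correct.

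The gap is in the final order count, which is the heart of the matter. You invoke the genus inequality (\ref{equation:genusexpansion}) for the pair $(\pi_{\peq},\gamma_\pi)$ to claim a loss of $r-2$ orders from connectivity, but that inequality controls $\#(\pi)+\#(\pi^{-1}\gamma)$, whereas the quantity you must bound is $-n+\#(p\vee q)+u$, with $\#(p\vee q)$ coming from the orthogonal Weingarten asymptotics and $u$ from the cumulants of the $A$'s; neither $\#(p\vee q)$ nor $u$ is the quantity $\#(\pi_{\peq}^{-1}\gamma)$ appearing in (\ref{equation:genusexpansion}), and plugging in the trivial bound $\#(\pi_{\peq}^{-1}\gamma)\geq 1$ gives nothing useful. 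Moreover, connectivity of $\hat p\vee\hat q\vee\hat\cV$ does not imply transitivity of $\langle\pi_{\peq},\gamma_\pi\rangle$, since components can be glued through a block of $\cV$ (a higher cumulant of traces of $A$'s), so the ``number of orbits drops to $1$'' step is unjustified. What is actually needed, and what the paper supplies, is the spoke-diagram machinery: the leading-order terms force $p=q$, $\pi_{\pep}$ a pairing and $\cV=\pi_{\pep}$, and then Lemmas \ref{lemma:standardspokediagram} and \ref{lemma:reversedspokediagram} together with Corollary \ref{corollary:maximalconectivity} show that every block of $p\vee\gamma$ (and every cycle of $\pi_{\pep}$) meets at most two cycles of $\gamma$, so no leading-order diagram can be connected across $r\geq 3$ traces; any gluing through $\cV$ costs at least one order in $u$, and any failure of $\pi_{\pep}$ to be a pairing or of $p=q$ likewise costs an order. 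Without Corollary \ref{corollary:maximalconectivity} (or an orthogonal, i.e.\ pairing-based, genus inequality that you would have to state and prove), your argument does not rule out a leading-order connected diagram spanning three or more of the $X_i$, and the claimed $\rO(d^{-1})$ bound does not follow. You also gloss over the type-(i) factors $\Tr(A_k)$, which carry no orthogonal indices and can only be connected through $\cV$; this is exactly the content of the last part of Corollary \ref{corollary:firstcumulants} and needs to be cited or reproved.
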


\noindent\textit{Proof of Theorem
  \ref{theorem:vanishingthird} using Theorem
  \ref{theorem:simplifiedvanishingthird}}: We begin by
recalling that the cumulant $\rk_r(X_, \dots, X_r)$ will be
0 whenever an $X_i$ is constant and $r \geq 2$. Recall also
that by our assumption of a second order limit distribution
$\E(\tr(A_i))$ is a convergent function of $d$ and thus
bounded. Thus if $\rk_r(X_1, \dots, X_r) \rightarrow 0$ then
so does $\E(\tr(A_j)) \rk_r(X_1, \dots, X_r)$.

Suppose $X_i = \Tr(A_j)$ for some $j$. Let $\centre{A_j} =
A_j - \E(\tr(A_j))I$. Let $c = \E(\tr(A_j))$. Then
$\E(\Tr(\centre{A_j})) = 0$ and $A_j = \centre{A_j} + cI$.
Then $\rk_r(X_1, \dots, X_{i-1}, cd, X_{i+1}, \dots, X_r) =
0$ and so
\begin{eqnarray*}\lefteqn{
\rk_r(X_1, \dots, X_r) } \\
& = & 
\rk_r(X_1, \dots, X_{i-1}, cd, X_{i+1}, \dots, X_r) \\
&&\mbox{}+
\rk_r(X_1, \dots, X_{i-1}, \Tr(\centre{A_j}), X_{i+1}, \dots, X_r) \\
& = &
\rk_r(X_1, \dots, X_{i-1}, \Tr(\centre{A_j}), X_{i+1}, \dots, X_r).
\end{eqnarray*}
So we may suppose that any $X$'s of the form $\Tr(A_j)$ are centred. 

Next suppose that $X_i = \Tr(O^{\eta_1}A_{j_1} \cdots
O^{\eta_s}A_{j_s})$, with each $\eta_i = \pm1$ and whenever
$A_{j_t} = I$ we have $\eta_t = \eta_{t+1}$. For each $i$,
we shall write $X_i = \Tr(O^{\eta_1}A_{j_1} \ab\cdots
O^{\eta_s}A_{j_s})$ as a linear combination of a constant
random variable and terms of the form $\Tr(A_{j_t})$, or
$\Tr(O^{\mu_1}A_{k_1} \ab\cdots O^{\mu_l}A_{k_l})$ where for
each $t$ either $\E(\Tr(A_{k_t})) = 0$ or $A_{k_t} = I$ and
$\mu_t = \mu_{t+1}$; where $\mu_{l+ 1} = \mu_1$. We then
replace $X_i$ in $\rk_r(X_1, \dots, X_r)$ by this linear
combination and get a sum of cumulants in which all the
$A$'s are of the form (\ref{equation:secondform}).

To show that each $X_i = \Tr(O^{\eta_1}A_{j_1} \cdots
O^{\eta_s}A_{j_s})$ can be written as such a linear
combination we replace for each $t$, $A_{j_t}$ with
$\centre{A}_{j_t} + \E(\tr(A_{j_t})) I$. We then expand this
sum. If we have a factor $\E(\tr(A_{j_t}))I$, we will get
cancellation of cyclically adjacent $O$'s wherever $\eta_t =
- \eta_{t+1}$. This might bring two centred $A$'s next to
each other. As the product will not necessarily be such the
expectation of the trace is 0, we repeat the centring
process and continue. Since the number of factors decreases
whenever there is a cancellation, the process terminates
with either: an $X_i$ of the form
(\ref{equation:secondform},\textit{i}); an $X_i$ as in
(\ref{equation:secondform}.\textit{ii}); or a constant $X_i$
(if all the $O$'s get cancelled). \qed

\begin{remark}
To illustrate the previous theorem let us consider the
example
\[
\rk_3(\Tr(OA_1O^{-1}A_2), \Tr(OA_3OA_4), \Tr(OA_5O^{-1}A_6)).
\]
There are six $A$'s and we let $A_i = \centre{A}_i + c_iI$
with $c_i = \E(\Tr(A_i))$. This produces $2^6$ terms, some
of which are 0 because some of the entries of the cumulant
are constant. For example we shall get terms such as
\[
c_1 c_3 c_4 c_5 \rk_3(\Tr(\centre{A_2}), \Tr(OIOI), \Tr(\centre{A}_6)). 
\]
If we started with the example
\[
\rk_3(\Tr(OA_1O^{-1}A_2), \Tr(OA_3O^{-1}A_4), \Tr(OA_5O^{-1}A_6)).
\]
then we would also get terms like
\[
c_1c_3c_5\rk_3(\Tr(\centre{A}_2), \Tr(\centre{A}_4), \Tr(\centre{A}_6))
\]
where there no $O$'s.
\end{remark}

Our task now is to prove Theorem
\ref{theorem:simplifiedvanishingthird}. We shall recall the
moment cumulant relation
\begin{equation}\label{equation:momentcumulant}
\E(X_1 \cdots X_r) = \sum_{\cU \in \cP(r)} \rk_\cU(X_1, \dots, X_r).
\end{equation}
So to prove something about the cumulants $\rk_r(X_1, \dots,
X_r)$ we shall prove something first about $\E(X_1 \cdots
X_r)$ and use this to prove
Theorem~\ref{theorem:simplifiedvanishingthird}. We let
$\cP_{1,2}(n)$ be the partitions of $[n]$ with blocks of
size either 1 or 2.

\begin{theorem}\label{theorem:part12version}
Whenever $X_1, \dots, X_r$ are of form
(\ref{equation:secondform}) then
\begin{equation}\label{equation:part12version}
\E(X_1 \cdots X_r) = \sum_{\cU \in \cP_{1,2}(r)} \rk_\cU(X_1, \dots, X_r)
+ \lo(1).
\end{equation}  

\end{theorem}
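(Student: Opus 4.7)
The plan is to compute $\E(X_1 \cdots X_r)$ directly via the Weingarten expansion (Proposition \ref{proposition:exact_formula}) and isolate the contributions of order $\rO(1)$, showing that they match $\sum_{\cU \in \cP_{1,2}(r)} \rk_\cU(X_1, \dots, X_r)$ up to $\lo(1)$. First I would reduce to the case where each $X_i$ is of type (ii) in (\ref{equation:secondform}); factors of type (i), namely $\Tr(A_k)$ with centred $A_k$, are appended to the product formula via Proposition \ref{proposition:exact_formulafirstextension}. Writing $X_1 \cdots X_r = \Tr_\gamma(O^{\epsilon_1} Y_1, \dots, O^{\epsilon_n} Y_n)$ with $\gamma \in S_n$ the product of $r$ disjoint cycles $c_1, \dots, c_r$ (one per $X_i$), Proposition \ref{proposition:exact_formula} gives
\[
\E(X_1 \cdots X_r) = \sum_{p,q \in \cP_2(n)} \langle \Wg(p), q \rangle \, \E\bigl(\Tr_{(\pi_\peq,\eta_\peq)}(Y_1, \dots, Y_n)\bigr).
\]

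Next I would perform the order analysis. Expanding each inner expectation as $\sum_{\cV \geq \pi_\peq} \rk_\cV$, and combining $\langle \Wg(p), q \rangle = \rO(d^{-n + \#(p \vee q)})$ with hypothesis (\ref{equation:secondorder}) (blocks of $\cV$ with one cycle of $\pi_\peq$ contribute $\rO(d)$, blocks with two cycles contribute $\rO(1)$, blocks with three or more contribute $\lo(1)$), the same estimates as in Theorems \ref{theorem:firstorderfreeness} and \ref{theorem:secondorderlimit} show that only configurations with $p = q$, $\pi_\pep$ a pairing, and $\cV = \pi_\pep$ survive at order $\rO(1)$. The centring/identity conditions of (\ref{equation:secondform}) kill any term where $\pi_\pep$ has a singleton corresponding to a centred $Y_k$, exactly as in the proof of Theorem \ref{theorem:firstorderfreeness}.

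The structural step now uses Corollary \ref{corollary:maximalconectivity}: when $\pi_\pep$ is a pairing, each block of $p \vee \gamma$ contains at most two of the cycles $c_1, \dots, c_r$. This induces a partition $\cU_p \in \cP_{1,2}(r)$ via $i \sim j$ iff $c_i, c_j$ lie in a common block of $p \vee \gamma$. Regrouping leading contributions by $\cU_p = \cU$, a pairing $p$ with $\cU_p = \cU$ is precisely a disjoint union, over $U \in \cU$, of pairings among the indices of $\{c_i\}_{i \in U}$. By the multiplicativity of the leading Weingarten coefficient (Remark \ref{remark:leadingorder}) and the matching factorization of $\E_{\pi_\pep}(\Tr_{(\pi_\pep,\eta_\pep)})$ along blocks, the regrouped sum becomes $\prod_{U \in \cU}(\text{block contribution}) + \lo(1)$. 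For a singleton block $\{i\}$ this contribution is $\E(X_i) + \lo(1) = \rk_1(X_i) + \lo(1)$ by Corollary \ref{corollary:firstcumulants}, and for a pair block $\{i, j\}$ it is $\cov(X_i, X_j) + \lo(1) = \rk_2(X_i, X_j) + \lo(1)$ by Theorem \ref{theorem:secondorderlimit}. Summing over $\cU \in \cP_{1,2}(r)$ yields $\sum_\cU \prod_{U \in \cU} \rk_U + \lo(1) = \sum_\cU \rk_\cU + \lo(1)$, which is (\ref{equation:part12version}).

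The main obstacle will be the factorization step: verifying that (i) pairings $p$ with $\cU_p = \cU$ are exactly the disjoint unions of pairings on the indices of the cycles of each block; (ii) both $\langle \Wg(p), p \rangle$ (at leading order, using Remark \ref{remark:leadingorder}) and the inner expectation $\E_{\pi_\pep}(\Tr_{(\pi_\pep,\eta_\pep)})$ factor along this decomposition up to $\lo(1)$; and (iii) the resulting product matches $\prod_{U \in \cU} \rk_U = \rk_\cU$. The combinatorial identification relies on Lemmas \ref{lemma:standardspokediagram} and \ref{lemma:reversedspokediagram} exactly as in Theorem \ref{theorem:secondorderlimit}, which characterize the pair-block contributions as the two spoke-diagram sums; once these alignments are in place, the $\lo(1)$ bounds from the earlier theorems combine into a uniform error.
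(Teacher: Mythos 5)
Your overall route is the same as the paper's, just run in the opposite direction: the paper expands $\sum_{\cU \in \cP_{1,2}(r)} \rk_\cU$ block by block into sums over admissible pairings $p$ (via Corollary \ref{corollary:firstcumulants} for singleton blocks and the intermediate estimate (\ref{equation:highestorderterms}) for pair blocks) and then matches the total against $\E(X_1\cdots X_r)$ using Corollary \ref{corollary:scriptE}; you start from the Weingarten expansion of $\E(X_1\cdots X_r)$, isolate the $p=q$, $\pi_\pep$-pairing terms, and regroup them by the induced partition $\cU_p\in\cP_{1,2}$ from Corollary \ref{corollary:maximalconectivity}. The key ingredients (order analysis, spoke-diagram lemmas, multiplicativity of the leading Weingarten coefficient) are identical, and for the case where every $X_i$ is of type (\ref{equation:secondform}.\textit{ii}) your argument goes through.

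There is, however, a genuine gap in your treatment of the type (\ref{equation:secondform}.\textit{i}) factors $X_i=\Tr(A_k)$ with $\E(\Tr(A_k))=0$. These indices are never touched by the Weingarten pairing $p$ (which lives on $[m]$, the type-(\textit{ii}) part), so your induced partition $\cU_p$ necessarily places each of them in a singleton block, whose contribution $\rk_1(X_i)=\E(X_i)=0$ annihilates the whole product. Your regrouped sum therefore cannot produce the partitions $\cU\in\cP_{1,2}(r)$ having pair blocks among type-(\textit{i}) indices, whose contributions $\rk_2(\Tr(A_k),\Tr(A_l))\to\phi_2(a_k,a_l)$ are generically nonzero (already the case $r=2$ with both factors of type (\textit{i}) shows the regrouped sum would wrongly be $\lo(1)$). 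What is missing is the paper's extra step: the factor $\E(\Tr(A_{m+1})\cdots\Tr(A_n))$ appearing in each $\cE_p$ must itself be expanded into cumulants and reduced to $\sum_{\cV\in\cP_{1,2}([r_0+1,r])}\rk_\cV+\lo(1)$ using hypothesis (\ref{equation:secondorder}) (vanishing of third and higher cumulants of the $A$'s), and one must invoke the last claim of Corollary \ref{corollary:firstcumulants} to see that mixed blocks joining a type-(\textit{i}) with a type-(\textit{ii}) factor contribute only $\rO(d^{-1})$, so that the two partition structures splice together. Once this is added your proof is complete and coincides in substance with the paper's.
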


\noindent\textit{Proof of Theorem
  \ref{theorem:simplifiedvanishingthird} using Theorem
  \ref{theorem:part12version}.}

\medskip\noindent By Corollary
\ref{corollary:firstcumulants} we have that $\rk_1(X_i) =
\rO(1)$ is $X_i$ is of type
(\ref{equation:secondform}.\textit{ii}) and $\rk_1(X_i) = 0$
if $X_i$ is of type
(\ref{equation:secondform}.\textit{i}). If $X_{i_1}$ and
$X_{i_2}$ are both of type
(\ref{equation:secondform}.\textit{ii}) then by Theorem
\ref{theorem:secondorderlimit}, $\rk_2(X_{i_1}, X_{i_2}) =
\rO(1)$. If they are both of type
(\ref{equation:secondform}.\textit{i}), then by assumption
(\ref{equation:secondorder}) we have $\rk_2(X_{i_1},
X_{i_2}) = \rO(1)$. If $X_{i_1}$ is of type
(\ref{equation:secondform}.\textit{i}) and $X_{i_2}$ is of
type (\ref{equation:secondform}.\textit{ii}), then
$\rk_2(X_{i_1}, X_{i_2}) = \E(X_{i_1}X_{i_2})$, as
$\E(X_{i_1}) = 0$. Then by Corollary
\ref{corollary:firstcumulants}, $\E(X_{i_1}X_{i_2}) =
\rO(d^{-1})$. So in all cases $\rk_1(X_{i_1})$ and
$\rk_2(X_{i_1}, X_{i_2})$ are of order at most $\rO(1)$.

Now by (\ref{equation:part12version})
\begin{eqnarray*}\lefteqn{
\rk_3(X_{i_1}, X_{i_2}, X_{i_3}) }\\ &=&
  \E(X_{i_1}X_{i_2}X_{i_3}) - \sum_{\cU \in \cP_{1,2}(3)}
  \rk_\cU(X_{i_1}, X_{i_2}, X_{i_3}) = \lo(1).
\end{eqnarray*}
Suppose we have shown for $3 \leq s < l$ that
$\rk_s(X_{i_1}, \dots,X_{i_s}) = \lo(1)$. Then
\begin{eqnarray*}\lefteqn{
\E(X_{i_1}\cdots X_{i_l}) - \sum_{\cU \in \cP_{1,2}(l)}
\rk_\cU(X_{i_1}, \dots, X_{i_l})} \\ & = & \rk_l(X_{i_1},
  \dots, X_{i_l}) + \sum_{\cU \in \widetilde{\cP}_{1,2}(l)}
  \rk_\cU(X_{i_1}, \dots, X_{i_l})
\end{eqnarray*}
Where $\widetilde{\cP}_{1,2}(l)$ is all the partitions in
$\cP(l)$ except those in $\cP_{1,2}(l)$ and $1_l$, the
partition with only one block. If $\cU \in
\widetilde{\cP}_{1,2}(l)$ then $\cU$ has blocks of size 1 or
2 and at least one block of size between 3 and $s$. Since
the cumulants from the blocks of order $\rO(1)$ and, by our
induction hypothesis, all others are of order $\rO(d^{-1})$,
the product $\rk_\cU(X_{i_1}, \dots, X_{i_l})$ is of order
$\lo(1)$. Hence
\[
\rk_l(X_{i_1}, \dots, X_{i_l}) + \sum_{\cU \in
  \widetilde{\cP}_{1,2}(l)} \rk_\cU(X_{i_1}, \dots, X_{i_l})
= \lo(1)
\]
forces us to conclude that $\rk_l(X_{i_1}, \dots, X_{i_l}) =
\lo(1)$.  \qed

\begin{notation}\label{notation:rearrangedsetup}
From now on we shall assume that we have positive integers
$n_1, \dots, n_r$. We let $n = n_1 + \cdots + n_r$. There is
$1 \leq r_0 \leq r$ such that for $r_0 \leq i \leq r$ we
have $n_i = 1$. We let $\gamma \in S_n$ be the permutation
with cycles
\[
(1, \dots, n_1) \cdots (n_1 + \cdots + n_{r_0 -1} +1, \dots,
n_1 + \cdots + n_{r_0})
\]
\[
\mbox{} \times
(n_1 + \cdots + n_{r_0} + n_{r_0 + 1}) \cdots
(n_1 + \cdots + n_{r_0} + n_r)
\]
If $r_0 = 1$ then $\gamma = e$ is the identity
permutation. We shall assume the random variables $X_i$ are
such that for $1 \leq i \leq r_0$
\begin{itemize}
\item $X_i = \Tr(%
      O^{\epsilon_{n_1 + \cdots + n_{i-1}+1}}
      A_{n_1 + \cdots + n_{i-1}+1} \cdots O^{\epsilon_{n_1 +
      \cdots + n_i}} A_{n_1 + \cdots + n_i})$
  \goodbreak{}\noindent where for each $n_1 + \cdots +
  n_{i-1}+1 \leq t \leq n_1 + \cdots + n_i$ either
  $\E(\Tr(A_t)) = 0$ or $A_t = I$ and $\epsilon_t =
  \epsilon_{\gamma(t)}$;
\end{itemize}
and for $r_0 < i \leq r$
\begin{itemize}
\item
$X_i = \Tr(A_{n_1 + \cdots + n_i})$ and $\E(X_i) = 0$.
\end{itemize}
Let $m = n_1 + \cdots + n_{r_0-1}$. If $m$ is odd and
positive then $\E(X_1 \cdots X_r) = 0$. So we shall assume
that $m$ is even, and possibly 0. Let $\cP_2(m, n)$ be
the set of partitions of $[n]$ whose restriction to $[m]$ is
a pairing and all of whose other blocks are singletons. In
the case $r_0 = 1$ we have $m = 0$ and the only partition in
$\cP_2(m,n)$ is the one with $n$ blocks of size 1. We
assume that $\epsilon \in \bZ_2^n$ with $\epsilon_i = 1$ for
$i > m$.

Now let $p$ and $q$ be in $\cP_2(m,n)$. Then $p \delta q
\delta$ is a permutation of $[\pm n]$ whose restriction to
$[\pm m]$ is a pairing and all of whose other cycles are
singletons. Now consider $\gamma_-^{-1} \delta_\epsilon p
\delta q \delta \delta_\epsilon \gamma$. Its restriction to
$[\pm n] \setminus [\pm m]$ consists of singletons. Its
restriction to $[\pm m]$ is as in Notation
\ref{notation:krewerascomplement}, i.e. the cycles occur in
pairs $\{c, c'\}$. We obtained a permutation, $\pi_\peq$, of
$[m]$ as follows. For each pair we choose one
representative, replacing any negative entries by their
absolute values. Now we wish to extend this construction to
the case where $p, q \in \cP_2(m,n)$. The cycles in
$[\pm n] \setminus [\pm m]$ also occur in pairs $(-k)(k)$
(with $k > 0$) and so we just choose $(k)$ for each of these
cycles. Also for $m < k \leq n$ let $\eta_\peq(k) = 1$.
\end{notation}

Let $X_1, \dots, X_r$ satisfy (\ref{equation:secondform})
and let us expand $\E(X_1 \cdots X_r)$ as follows.
\begin{eqnarray*}\lefteqn{
\E(X_1 \cdots X_r)}\\
& = &
\sum_{p,q \in \cP_2(m,n)}
\langle \Wg(p), q \rangle
\E(\Tr_{(\pi_\peq, \eta_\peq)}(A_1, \dots, A_n)).
\end{eqnarray*}
We need to find the order of $\E(\Tr_{(\pi_\peq,
  \eta_\peq)}(A_1, \dots, A_n))$.
\begin{proposition}\label{proposition:stepIV}
If $m \geq 2$ and ${\pi_\peq}|_{{[m]}}$ is not a pairing then 
\[
\E(\Tr_{(\pi_\peq, \eta_\peq)}(A_1, \dots, A_n)) = \rO(d^{m/2 - 1}).
\]
If $\pi_\peq|_{[m]}$ is a pairing or if $m = 0$ then 
\begin{eqnarray*}\lefteqn{
\E(\Tr_{(\pi_\peq, \eta_\peq)}(A_1, \dots, A_m)) }\\ & = &
  \E_{\pi_\peq|_{[m]}}(\Tr_{(\pi_\peq|_{[m]},
    \eta_\peq|_{[m]})}(A_1, \dots, A_m))\E(\Tr(A_{m+1})
  \cdots \Tr(A_n)) \\ && \quad \mbox{} + \rO(d^{m/2-1}).
\end{eqnarray*}
\end{proposition}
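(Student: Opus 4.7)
My approach is a moment-cumulant expansion of $\E(\Tr_{(\pi_\peq,\eta_\peq)}(A_1,\ldots,A_n))$, with orders of each term read off from the real second order limit distribution hypothesis (\ref{equation:secondorder}).

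First, I will exploit the explicit structure of $\pi_\peq$ provided in Notation \ref{notation:rearrangedsetup}: since $\pi_\peq$ restricted to $[n]\setminus[m]$ consists only of singleton cycles, each carrying $\eta_\peq=1$, we have the factorisation
$$
\Tr_{(\pi_\peq,\eta_\peq)}(A_1,\ldots,A_n) = Y_1\cdots Y_{c'}\cdot Z_{m+1}\cdots Z_n,
$$
where $c'=\#(\pi_\peq|_{[m]})$, the $Y_j$'s are the traces attached to the cycles of $\pi_\peq|_{[m]}$, and $Z_k=\Tr(A_k)$ for $k>m$. Applying the moment-cumulant formula,
$$
\E\Big(\prod_j Y_j\prod_k Z_k\Big) = \sum_{\cU}\rk_\cU(Y_1,\ldots,Y_{c'},Z_{m+1},\ldots,Z_n).
$$
By (\ref{equation:secondorder}), each factor satisfies $\rk_1(Y_j)=\rO(d)$, $\rk_1(Z_k)=0$ (by centredness), and $\rk_r=\rO(1)$ for $r\geq 2$, so $\rk_\cU=\rO(d^u)$, where $u$ is the number of singleton blocks of $\cU$ that hold some $Y_j$. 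Moreover, any $\cU$ with a singleton block containing some $Z_k$ vanishes; and any singleton block of $\cU$ containing a $Y_j$ coming from a singleton cycle of $\pi_\peq|_{[m]}$ also vanishes, because such a singleton cycle forces (by the observation made in the proof of Theorem \ref{theorem:firstorderfreeness}) the associated $A_k$ to be centred, so that $\E(Y_j)=0$.

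The crux of the argument, which I view as the main obstacle, is the combinatorial bound $u\leq m/2$, with equality if and only if $\pi_\peq|_{[m]}$ is a pairing. Letting $s'$ denote the number of singleton cycles of $\pi_\peq|_{[m]}$, the observations above show that only the $c'-s'$ non-singleton internal cycles can contribute to $u$; summing their cycle sizes gives $c'-s'\leq (m-s')/2\leq m/2$, with equality forcing $s'=0$ and every cycle of $\pi_\peq|_{[m]}$ to have size exactly $2$, i.e.\ $\pi_\peq|_{[m]}$ a pairing. Hence if $m\geq 2$ and $\pi_\peq|_{[m]}$ is not a pairing, then $u\leq m/2-1$ for every non-vanishing $\cU$, and summing over $\cU$ immediately yields the first claim.

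For the pairing case (and the trivial $m=0$ case), I will isolate the $\cU$'s attaining $u=m/2$: these are precisely the $\cU$'s placing each $Y_j$ in its own singleton block and distributing $\{Z_{m+1},\ldots,Z_n\}$ into blocks of size at least two (so that no $Z_k$ becomes a singleton). Their contributions sum to
$$
\Big(\prod_j \E(Y_j)\Big)\sum_{\cV}\rk_\cV(Z_{m+1},\ldots,Z_n) = \E_{\pi_\peq|_{[m]}}\!\big(\Tr_{(\pi_\peq|_{[m]},\eta_\peq|_{[m]})}(A_1,\ldots,A_m)\big)\,\E(\Tr(A_{m+1})\cdots\Tr(A_n)),
$$
where the last equality uses moment-cumulant inversion on the $Z$'s. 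Every remaining $\cU$ has $u\leq m/2-1$ and therefore contributes the stated error $\rO(d^{m/2-1})$, which completes the second claim.
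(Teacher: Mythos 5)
Your proof is correct and follows essentially the same route as the paper: a moment--cumulant expansion of $\E(\Tr_{(\pi_\peq,\eta_\peq)}(A_1,\dots,A_n))$, the order estimate $\rk_\cU = \rO(d^u)$ with $u$ the number of blocks carrying a single cycle of $\pi_\peq$, the elimination of singleton contributions via centredness, the bound $u\leq m/2$ with equality forcing $\pi_\peq|_{[m]}$ to be a pairing, and the isolation of the leading $\cU$'s whose sum factors into the stated product of expectations.
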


\begin{proof}
Let follow the notation used in Equation
(\ref{equation:partitionnotation}). If $\cU$ is a partition
on $[n]$ and $\pi$ any permutation of $[n]$ we write
$\E_\cU(\Tr_\pi(A_1, \dots, A_n))$ to be the product
$\prod_{i=1}^k \E(\Tr_{\pi_i}(A_1, \dots, A_n))$, where the
blocks of $\cU$ are $\{U_1, \dots, U_k\}$ and $\pi_i =
\pi|_{U_i}$. We likewise let $\rk_\cU(\Tr_\pi(A_1, \dots,
A_n))$ be the product of cumulants along the blocks of
$\cU$, see equation
(\ref{equation:cumulantnotation}). Recall that we then have
the moment-cumulant relation
\begin{eqnarray}\label{equation:moebiusinversionii}\lefteqn{
\E(\Tr_{(\pi_\peq, \eta_\peq)}(A_1, \dots, A_n))}\notag\\
& = &
\mathop{\sum_{\cU \in \cP(n)}}_{\pi_\peq \leq \cU}
\rk_\cU(\Tr_{(\pi_\peq, \eta_\peq)}(A_1, \dots, A_n)).
\end{eqnarray}
By our assumption (\ref{equation:secondorder}) on the
existence of a real second order limit distribution of the
$A$'s we have
\[
\rk_\cU(\Tr_{(\pi_\peq, \eta_\peq)}(A_, \dots, A_n))
=
\rO(d^u)
\]
where $u$ is the number of blocks of $\cU$ that contain only
one cycle of $\pi_\peq$. Suppose $\rk_\cU(\Tr_{(\pi_\peq,
  \eta_\peq)}(A_, \dots, A_n)) \not= 0$. Then any block of
$\cU$ that contains a single cycle of $\pi_\peq$ must
contain a cycle of $\pi_\peq|_{[n]}$, as $\E(\Tr(A_k)) = 0$
for $m < k \leq n$. Thus $u \leq \#(\pi_\peq|_{[m]})$. Also
recall, from the fourth paragraph of the proof of Theorem
\ref{theorem:firstorderfreeness}, that if $(k)$ is a
singleton of $\pi_\peq|_{[m]}$ then $\E(\Tr(A_k)) = 0$, and
hence $\rk_\cU(\Tr_{(\pi_\peq, \eta_\peq)}(A_1, \dots,\ab
A_n)) = 0$.  So for any block $U$ of $\cU$ that contains
only one cycle of $\pi_\peq$, $U$ must contain at least two
elements. Thus $u \leq m/2$. We can only have $u = m/2$ when
every block of $\cU|_{[m]}$ contains one cycle of
$\pi_\peq|_{[m]}$ and that cycle has two elements,
i.e. $\pi_\peq|_{[m]}$ is a pairing. This proves the first
claim.

If $\pi_\peq|_{[m]}$ is a pairing then we have just seen
that to have $u = m/2$ we must have $\cU|_{[m]} = \pi_\peq
|_{[m]}$. Thus if we only consider $\cU$'s for which
$\cU|_{[m]} = \pi_\peq |_{[m]}$ we have
\begin{eqnarray*}\lefteqn{
\E(\Tr_{(\pi_\peq, \eta_\peq)} (A_1, \dots, A_n))}\\ & = &
  \E_{\pi_\peq|_{[m]}}(\Tr_{(\pi_\peq|_{[m]},
    \eta_\peq|_{[m]})}(A_1, \dots, A_m)) \\ && \quad\mbox{}
  \times \sum_{\cU' \in \cP([m+1, n])} \kern-1.5em
  \rk_{\cU'}(\Tr(A_{m+1}), \dots, \Tr(A_m)) \\ & = &\!\!\!
  \E_{\pi_\peq|_{[m]}}(\Tr_{(\pi_\peq|_{[m]},
    \eta_\peq|_{[m]})}(A_1, \dots, A_m)) \E(\Tr(A_{m+1}),
  \dots, \Tr(A_m))
\end{eqnarray*}
Finally we add back the remaining terms to obtain that
\begin{eqnarray*}\lefteqn{
\E(\Tr_{(\pi_\peq, \eta_\peq)} (A_1, \dots, A_n))}\\ & =
  &\!\!\!  \E_{\pi_\peq|_{[m]}}(\Tr_{(\pi_\peq|_{[m]},
    \eta_\peq|_{[m]})}(A_1, \dots, A_m)) \E(\Tr(A_{m+1}),
  \dots, \Tr(A_m)) \\ && \quad\mbox{} + \rO(d^{m/2 - 1}).
\end{eqnarray*}
\end{proof}

\begin{notation} 
Suppose we have $r_0, r, m, n$ and $\gamma$ and $\epsilon$
as in Notation \ref{notation:rearrangedsetup}. Let
$\cA(\gamma,\epsilon,m,n)$ be the set of partitions $p \in
\cP_2(m,n)$ such that $\pi_\pep|_{[m]}$ is a pairing,
the condition being vacuously satisfied when $m = 0$. For $p
\in \cA(\gamma, \epsilon, m, n)$ let
\begin{eqnarray*}\lefteqn{
\cE_p(A_1, \dots, A_n)}\\ & = & d^{-m/2}
  \E_{\pi_\pep|_{[m]}}(\Tr_{(\pi_\pep|_{[m]},
    \eta_\pep|_{[m]})}(A_1, \dots, A_m)) \\ &&\quad\mbox{}
  \times \E(\Tr(A_{m+1}) \cdots \Tr(A_n)).
\end{eqnarray*}
\end{notation}

\begin{corollary}\label{corollary:scriptE}
Suppose $X_1, \dots, X_r$ satisfy
(\ref{equation:secondform}). Then
\[
\E(X_1 \cdots X_r) =
\sum_{p \in \cA(\gamma, \epsilon,m ,n)}
\cE_p(A_1, \dots, A_n) + \rO(d^{-1}).
\]
\end{corollary}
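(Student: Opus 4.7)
The plan is to apply Proposition \ref{proposition:exact_formulafirstextension} to rewrite $\E(X_1 \cdots X_r)$ as a Weingarten sum, then use Proposition \ref{proposition:stepIV} together with the order bound (\ref{equation:weingartenorder}) on $\Wg$ to identify the leading contributions while bounding the rest by $\rO(d^{-1})$.

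First, with Notation \ref{notation:rearrangedsetup} in force, we have $X_1 \cdots X_{r_0 - 1} = \Tr_\gamma(O^{\epsilon_1} A_1, \dots, O^{\epsilon_m} A_m)$ (where $\gamma$ restricts to its non-singleton cycles on $[m]$), and the trailing factor is $X_{r_0} \cdots X_r = \Tr(A_{m+1}) \cdots \Tr(A_n)$. Applying Proposition \ref{proposition:exact_formulafirstextension} then yields
\begin{multline*}
\E(X_1 \cdots X_r) = \sum_{p, q \in \cP_2(m)} \langle \Wg(p), q \rangle \\ \times \E\bigl(\Tr_{(\pi_\peq, \eta_\peq)}(A_1, \dots, A_m) \Tr(A_{m+1}) \cdots \Tr(A_n)\bigr).
\end{multline*}
Extending $p, q$ by singletons on $[m+1, n]$ as in Notation \ref{notation:rearrangedsetup} identifies the inner expectation with $\E(\Tr_{(\pi_\peq, \eta_\peq)}(A_1, \dots, A_n))$, so that Proposition \ref{proposition:stepIV} applies directly.

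Now I would split each summand into three cases using $\langle \Wg(p), q \rangle = \rO(d^{-m + \#(p \vee q)})$ together with $\#(p \vee q) \leq m/2$. First, if $\pi_\peq|_{[m]}$ is not a pairing, Proposition \ref{proposition:stepIV} bounds the inner expectation by $\rO(d^{m/2-1})$; since $\langle \Wg(p), q \rangle = \rO(d^{-m/2})$ the product is $\rO(d^{-1})$. Second, if $\pi_\peq|_{[m]}$ is a pairing but $p \neq q$, then $\#(p \vee q) \leq m/2 - 1$ strictly (distinct pairings of $[m]$ cannot share a common coarsening with $m/2$ two-element blocks), hence $\langle \Wg(p), q \rangle = \rO(d^{-m/2 - 1})$; combined with the $\rO(d^{m/2})$ bound on the inner expectation, this term is again $\rO(d^{-1})$. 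Third, if $p = q$ and $\pi_\pep|_{[m]}$ is a pairing, i.e., $p \in \cA(\gamma, \epsilon, m, n)$, then by Remark \ref{remark:leadingorder} the leading coefficient of $\langle \Wg(p), p \rangle$ is exactly $d^{-m/2}$ (the relevant $\rho = e$ has only singleton cycles, and $C_0 = 1$), so this term contributes
\begin{equation*}
d^{-m/2} \E_{\pi_\pep|_{[m]}}\bigl(\Tr_{(\pi_\pep|_{[m]}, \eta_\pep|_{[m]})}(A_1, \dots, A_m)\bigr)\, \E(\Tr(A_{m+1}) \cdots \Tr(A_n)) = \cE_p(A_1, \dots, A_n)
\end{equation*}
up to $\rO(d^{-1})$.

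Summing over the finitely many pairs in $\cP_2(m) \times \cP_2(m)$ yields $\E(X_1 \cdots X_r) = \sum_{p \in \cA(\gamma, \epsilon, m, n)} \cE_p(A_1, \dots, A_n) + \rO(d^{-1})$, as required. The main obstacle is the combinatorial verification needed in the second case, namely that distinct pairings $p \neq q$ on $[m]$ must satisfy $\#(p \vee q) \leq m/2 - 1$ (if $p \vee q$ had $m/2$ blocks summing to $m$ elements, every block would be a single pair common to both $p$ and $q$, contradicting $p \neq q$), together with the extraction of the leading coefficient $1$ of $d^{-m/2}$ in $\langle \Wg(p), p \rangle$ via Remark \ref{remark:leadingorder}. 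Once these facts are in hand the case-by-case order counting is straightforward.
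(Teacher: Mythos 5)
Your proof is correct and follows essentially the same route as the paper's: apply Proposition \ref{proposition:exact_formulafirstextension}, use Proposition \ref{proposition:stepIV} to kill the terms where $\pi_\peq|_{[m]}$ is not a pairing, observe that $p \neq q$ forces $\#(p \vee q) \leq m/2 - 1$ so those terms are also $\rO(d^{-1})$, and extract the leading coefficient $d^{-m/2}$ of $\langle \Wg(p), p\rangle$ for the diagonal terms. The only difference is that you spell out the two combinatorial facts (the strict inequality for $p \neq q$ and the leading coefficient $1$ via Remark \ref{remark:leadingorder}) that the paper merely asserts.
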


\begin{proof} When $m = 0$ there is nothing to prove. 
According to Proposition
\ref{proposition:exact_formulafirstextension}
\begin{eqnarray*}\lefteqn{
\E(X_1 \cdots X_r) }\\
& = &
\sum_{p, q \in \cP_2(m,n)}
\langle \Wg(p), q \rangle
\E(\Tr_{(\pi_\peq, \eta_\peq)}(A_1, \dots, A_n)). 
\end{eqnarray*}
By Proposition \ref{proposition:stepIV}, if
$\pi_\peq|_{[m]}$ is not a pairing we have
\[
\E(\Tr_{(\pi_\peq, \eta_\peq)}(A_1, \dots, A_n)) = \rO(d^{m/2 - 1})
\]
and $\langle \Wg(p), q \rangle = \rO(d^{-n + \#(p \vee
  q)})$. So
\[
\langle \Wg(p), q \rangle \E(\Tr_{(\pi_\peq,
  \eta_\peq)}(A_1, \dots, A_n)) = \rO(d^{- 1}).
\]
Also if $\#(p \vee q) < n/2$ (i.e. $p \not= q$) we get the
same conclusion. When $p=q$ and $\pi_\pep|_{[m]}$ is a
pairing, then $p \in \cA(\gamma, \epsilon,m,n)$ and
\[
\langle \Wg(p), q \rangle \E(\Tr_{(\pi_\peq,
  \eta_\peq)}(A_1, \dots, A_n)) = \cE_p(A_1, \dots, A_n) +
\rO(d^{- 1})
\]
because $\langle \Wg(p), p\rangle = d^{-m/2} + \rO(d^{-m/2 -
  1})$.
\end{proof}

\medskip\noindent\textit{Proof of Theorem
  \ref{theorem:part12version}:} 
To prove the theorem we show that
\begin{eqnarray}\label{equation:aform}
\sum_{\cU \in \cP_2(r)}
\rk_\cU(X_1, \dots, X_r) 
=
\sum_{p \in\cA(\gamma, \epsilon, m, n)}
\cE_p(A_1, \dots, A_n) + \lo(1)
\end{eqnarray}
and then apply Corollary \ref{corollary:scriptE}. We saw in
the proof of Theorem \ref{theorem:simplifiedvanishingthird}
that if $X_{i_1}$ is of type
(\ref{equation:secondform}.\textit{i}) and $X_{i_2}$ is of
type (\ref{equation:secondform}.\textit{ii}) then
$\rk_2(X_{i_1}, X_{i_2}) = \rO(d^{-1})$, so on the left hand
side of (\ref{equation:aform}) we only have to consider
$\cU$'s for which each block is either contained in $[m]$ or
in $[m+1, n]$. Thus
\begin{eqnarray*}\lefteqn{
\sum_{\cU \in \cP_2(r)} \rk_\cU(X_1, \dots, X_r)} \\
& = &
\sum_{\cU \in \cP_2(r_0)} \rk_{\cU}(X_1, \dots, X_{r_0})
\sum_{\cV \in \cP_2([r_0+1,r])} \rk_{\cV}(X_{r_0+1}, \dots, X_r) 
+ \lo(1).
\end{eqnarray*}

By assumption (\ref{equation:secondorder}) we have 
\[
\sum_{\cV \in \cP_{1,2}([r_0+1, r])} \rk_\cV (X_{r_0+1}, \dots, X_r)
= \E(X_{m+1} \cdots X_n) + \lo(1)
\]
because cumulants corresponding to blocks of size three or
larger are $\lo(1)$ and cumulants corresponding to blocks of
size two are $\rO(1)$ and cumulants corresponding to blocks
of size one are 0.

Let us next show that
\begin{eqnarray}\label{equation:cumulantsandE}\lefteqn{
\sum_{\cU \in \cP_{1,2}(r_0)} \rk_\cU(X_1, \dots, X_{r_0}) }\notag\\
& = &
d^{-m/2}\kern-1.5em
\mathop{\sum_{p \in \cP_2(m)}}_{\pi_\pep \mathrm{\ a\ pairing}}\kern-1em
\E_{\pi_\pep}(\Tr_{(\pi_\pep, \eta_\pep)}(A_1, \dots, A_m)) + \rO(d^{-1}).
\end{eqnarray}
If we multiply these last two equations we get equation
(\ref{equation:aform}) as
\begin{eqnarray*}\lefteqn{
\cE_p(A_1, \dots, A_n) }\\ 
& = &
d^{-m/2}\E_{\pi_\pep}(\Tr_{(\pi_\pep, \eta_\pep)}(A_1, \dots, A_m))
\E(X_{m+1} \cdots X_n).
\end{eqnarray*}
To prove (\ref{equation:cumulantsandE}) we use
(\ref{equation:firstcumulantandp}) and
(\ref{equation:expectationexpansion}). They say that a first
and second cumulant of $X$'s if type
(\ref{equation:secondform}.\textit{ii}) can be written, up
to terms of order $\rO(d^{-1})$, as sums over pairings $p$
in unions of intervals of $\gamma$ for which $\pi_\pep$ is a
pairing. Moreover by Corollary
\ref{corollary:maximalconectivity} if $p \in \cP_2(m)$ is a
pairing and $\pi_\pep$ is also a pairing then at most two
cycles of $\gamma$ can be contained in any block of $p \vee
\gamma$.

Let $p \in \cP_2(m)$ be a pairing such that $\pi_\pep$ is a
pairing. The partition $p \vee \gamma$ determines a
partition $\cU_p \in \cP(r_0)$ of the cycles of $\gamma$.
By Corollary \ref{corollary:maximalconectivity}, $\cU_p \in
\cP_{1,2}(r_0)$. Thus we can write
\[
d^{-m/2}\kern-1.0em
\sum_{p \in \cA(\gamma, \epsilon,m)}
\E_{\pi_\pep}(\Tr_{(\pi_\pep, \eta_\pep)}(A_1, \dots, A_m)) \]
\[
=
\sum_{\cU \in \cP_{1,2}(r_0)}
d^{-m/2}\kern-1.0em
\mathop{\sum_{p \in \cA(\gamma, \epsilon,m)}}_{\cU_p = \cU}
\E_{\pi_\pep}(\Tr_{(\pi_\pep, \eta_\pep)}(A_1, \dots, A_m)).
\]
So to prove (\ref{equation:cumulantsandE}) it suffices to
prove that for $\cU \in \cP_{1,2}(r_0)$
\begin{eqnarray}\label{equation:individualpartition}\lefteqn{
\rk_\cU(X_1, \dots, X_{r_0}) }\notag\\ 
& = &
\mathop{\sum_{p \in \cA(\gamma, \epsilon,m)}}_{\cU_p = \cU}
\E_{\pi_\pep}(\tr_{(\pi_\pep, \eta_\pep)}(A_1, \dots, A_m)) + \rO(d^{-1}).
\end{eqnarray}
Now $\rk_\cU(X_1, \dots, X_{r_0})$ is a product of first and
second cumulants. For each first cumulant, $\E(X_j)$, we
apply equation (\ref{equation:firstcumulantandp}) to write
\[
\E(X_j) = d^{-s/2} \sum_p \E_{\pi_\pep}(\Tr_{(\pi_\pep,
  \eta_\pep)}(A_{i_1}, \dots, A_{i_s})) + \rO(d^{-1})
\] 
with $p$ running over pairings of the corresponding cycle
$(i_1, \dots, i_s)$ of $\gamma$ such that $\pi_\pep$ is a
pairing.

For each second cumulant $\rk_2(X_k, X_l)$ we apply equation
(\ref{equation:expectationexpansion}) to write
\[
\cov(X_k, X_l) = d^{-t/2} \sum_p
\E_{\pi_\pep}(\Tr_{(\pi_\pep, \eta_\pep)}(A_{j_1}, \dots,
A_{j_t})) + \rO(d^{-1})
\] 
with $p$ running over pairings that connect the corresponding union
$(j_1, \dots, j_t)$ of two cycles of $\gamma$ such that
$\pi_\pep$ is a pairing. Taking the product of these
equations gives us (\ref{equation:individualpartition}). \qed


\section{Main Results on Asymptotic\\ 
          Real Second Order Freeness}
\label{section:mainresults}

In this section we will present some consequences of
Theorems \ref{theorem:firstorderfreeness},
\ref{theorem:secondorderlimit} and
\ref{theorem:vanishingthird}.

\begin{theorem}\label{theorem:orthogonalsecondlimit}
The ensemble of Haar distributed orthogonal random matrices
has a real second order limit distribution.
\end{theorem}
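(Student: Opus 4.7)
The plan is to produce a concrete real second order non-commutative probability space containing a single element $o$ that realizes the limit, and then to verify the three conditions of Definition \ref{definition:secondorderlimiting} using results already established. Take $\cA = \bC[z, z^{-1}]$, the commutative algebra of Laurent polynomials in one variable, equipped with the order-two anti-automorphism $t: z \mapsto z^{-1}$. This mirrors the defining relation $O^t = O^{-1}$ for an orthogonal matrix. Define $\phi: \cA \to \bC$ on monomials by $\phi(z^m) = \delta_{m,0}$, and define $\phi_2 : \cA \times \cA \to \bC$ bilinearly by $\phi_2(z^m, z^n) = 2|m|$ when $|m| = |n| \neq 0$ and $\phi_2(z^m, z^n) = 0$ otherwise, matching Corollary \ref{corollary:secondorderlimitorthogonal}.

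First I would check that $(\cA, \phi, \phi_2, t)$ is a valid real second order non-commutative probability space. Traciality of $\phi$ and bi-traciality of $\phi_2$ are automatic from commutativity. The identity $\phi(a^t) = \phi(a)$ reduces to $\phi(z^{-m}) = \phi(z^m)$, which holds since both equal $\delta_{m,0}$; and $\phi_2(a^t, b) = \phi_2(a, b^t) = \phi_2(a, b)$ follows from the $|\cdot|$-symmetry built into the formula. Finally $\phi_2(1, z^n) = 2|0|\delta_{0,|n|} = 0$ for all $n$, as required.

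Next I would verify the three convergences. Since $O^t = O^{-1}$, every polynomial in the variables $\{x, x^t\}$ evaluated on $O$ is a Laurent polynomial in $O$; by multilinearity of $\E(\tr(\,\cdot\,))$, $\cov$ and $\rk_r$, it suffices to check the conditions on monomials $z^{m}$. The first-order limit $\lim_d \E(\tr(O^m)) = \delta_{m,0} = \phi(z^m)$ is exactly Corollary \ref{corollary:haarorthogonaldistribution} (the case $m = 0$ is trivial). The second-order limit $\lim_d \cov(\Tr(O^m), \Tr(O^n)) = \phi_2(z^m, z^n)$ is the content of Corollary \ref{corollary:secondorderlimitorthogonal}, matched to the formula above by construction.

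The third condition, the vanishing of $\rk_r$ for $r \geq 3$, is obtained by applying Theorem \ref{theorem:vanishingthird} to the trivial deterministic ensemble $\{A_j\} = \{I\}$, which trivially satisfies the hypothesis of a real second order limit distribution (all traces are deterministic, so all cumulants of order $\geq 2$ vanish and first-order traces converge). For $m \neq 0$ we write
\[
\Tr(O^m) \;=\; \Tr(O^{\sgn(m)} I \cdot O^{\sgn(m)} I \cdots O^{\sgn(m)} I)
\]
as a word of length $|m|$ with constant sign $\sgn(m)$ and all deterministic letters equal to $I$; this places each $\Tr(O^{m_i})$ into the form (\ref{equation:basicform}.\textit{ii}), since the cyclic constancy condition $\epsilon_{k-1} = \epsilon_k$ is vacuous when all $\epsilon$'s agree. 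The case $m = 0$ reduces $\Tr(O^m) = d$ to a constant, which contributes $0$ to any $\rk_r$ with $r \geq 2$. Thus Theorem \ref{theorem:vanishingthird} gives $\lim_d \rk_r(\Tr(O^{m_1}), \dots, \Tr(O^{m_r})) = 0$ for all $r \geq 3$. There is no genuine obstacle here: the substantive work has been done in Sections \ref{section:firstorderfreenes}--\ref{section:vanishing}, and this proof is essentially a packaging statement that identifies the limiting real second order law of Haar orthogonals and names the abstract space in which it lives.
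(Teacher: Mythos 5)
Your proof is correct and follows essentially the same route as the paper: the paper's own proof simply cites Corollary \ref{corollary:haarorthogonaldistribution} for the moments, Corollary \ref{corollary:secondorderlimitorthogonal} for the fluctuation moments, and Theorem \ref{theorem:vanishingthird} (applied to words $O^{\sgn(m)}I\cdots O^{\sgn(m)}I$) for the vanishing of higher cumulants. Your only addition is to exhibit the limiting space explicitly as $\bC[z,z^{-1}]$ with $z^t = z^{-1}$, which the paper leaves implicit but which is a harmless and correct elaboration.
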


\begin{proof}
Corollaries \ref{corollary:haarorthogonaldistribution} and
\ref{corollary:secondorderlimitorthogonal} show that an
ensemble of Haar orthogonal matrices has convergent moments
$\{\E(\tr(O^m))\}_d$ and convergent fluctuation moments
$\{\rk_2(\Tr(O^m), \Tr(O^n))\}_d$. A particular example of
Theorem \ref{theorem:vanishingthird} is the case when we
have $\rk_r(\Tr(O^{m_1}), \dots, \Tr(O^{m_r}))$ for some
non-zero integers $m_1, \dots, m_r$. Together these results
then show that an ensemble of Haar orthogonal random
matrices has a real second order limit distribution.
\end{proof}

\begin{theorem}\label{theorem:a_and_o}
Suppose $\{A_i\}_i$ is an ensemble of random matrices with a
real second order limit distribution and $O$ is an ensemble
of Haar distributed orthogonal random matrices. If the
entries of $\{A_i\}_i$ are independent from those of $O$,
then $\{A_i\}_i$ and $O$ are asymptotically real second
order free.
\end{theorem}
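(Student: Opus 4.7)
The plan is to verify the two requirements of Definition \ref{definition:asymptoticrealsecond}: that the joint ensemble $\{A_i, O\}_d$ possesses a real second order limit distribution, and that the two unital $t$-invariant subalgebras generated in the limit, $\cA_1 = \alg(1, a_i, a_i^t)$ and $\cA_2 = \alg(1, o, o^t)$, are real free of second order in the sense of Definition \ref{definition:secondorderfreeness}. All of the combinatorial work has been done in sections \ref{section:firstorderfreenes}, \ref{section:fluctuationmoments}, and \ref{section:vanishing}; what remains is to assemble the pieces.

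For the existence of a joint real second order limit distribution I check the three items of Definition \ref{definition:secondorderlimiting} for mixed polynomials in $A_i$, $A_i^t$, $O$, and $O^t = O^{-1}$. Convergence of normalized traces is provided by the hypothesis on $\{A_i\}_i$, by Corollary \ref{corollary:haarorthogonaldistribution} for pure $O$-words, and by Theorem \ref{theorem:firstorderfreeness} for mixed words. Convergence of covariances is provided by the hypothesis on $\{A_i\}_i$, by Corollary \ref{corollary:secondorderlimitorthogonal} for pure $O$-words, and by Theorem \ref{theorem:secondorderlimit} for mixed words. Vanishing of third and higher cumulants is exactly Theorem \ref{theorem:vanishingthird}. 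Collecting these produces a real second order non-commutative probability space $(\cA,\phi,\phi_2,t)$ carrying the limit variables $\{a_i\}$ and $o$ with the expected $t$-structure ($o^t = o^{-1}$ and $\phi(o^s) = 0$ for $s \neq 0$).

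For the second requirement, first-order freeness of $\cA_1$ and $\cA_2$ with respect to $\phi$ is precisely the content of Theorem \ref{theorem:firstorderfreeness}, recast in Voiculescu's language of asymptotic freeness. Item (ii)(a) of Definition \ref{definition:secondorderfreeness} -- vanishing of $\phi_2$ when $m \neq n$ -- is the $m \neq n$ conclusion of Theorem \ref{theorem:secondorderlimit}. Item (ii)(b), the spoke diagram formula, is the $m = n$ case: because $\phi(o^s) = 0$ for $s \neq 0$ while $\phi(o^0)=1$, the factors $\phi(o^{k_i + l_{r-(i-1)}})$ and $\phi(o^{k_i - l_{r+i}})$ appearing in equation (\ref{equation:secondorderside}) collapse to Kronecker deltas that impose exactly the $O$-power pairings dictated by a standard, respectively reversed, spoke diagram. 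The two families of surviving terms in (\ref{equation:secondorderside}) thus match, term by term, the two sums over $\spoke{n}$ and $\rspoke{n}$ that appear on the right-hand side of equation (\ref{equation:secondorderdefinition}).

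The main step requiring care is the reduction from an arbitrary cyclically alternating centred product of elements of $\cA_1$ and $\cA_2$ to monomials of the form $O^{k_1}A_{i_1} \cdots O^{k_n}A_{i_n}$, on which Theorem \ref{theorem:secondorderlimit} is phrased. By the bilinearity of $\phi_2$ it suffices to test on monomials in the generators; by the trace property of $\phi_2$, any such cyclically alternating product can be put in the $O$-leading form up to a cyclic rotation; and by Theorem \ref{theorem:firstorderfreeness}, any summand that fails to pair $O$-powers with $O$-powers and $A$-monomials with $A$-monomials contributes $0$ in the limit, so only the pure spoke-diagram terms survive to reproduce the formula required by Definition \ref{definition:secondorderfreeness}. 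With these identifications in place the conclusions of Theorems \ref{theorem:firstorderfreeness}, \ref{theorem:secondorderlimit}, and \ref{theorem:vanishingthird} package together to yield the asymptotic real second order freeness of $\{A_i\}_i$ and $O$.
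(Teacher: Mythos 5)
Your proposal is correct and follows essentially the same route as the paper: the paper's own proof of Theorem \ref{theorem:a_and_o} is a one-line appeal to Theorems \ref{theorem:firstorderfreeness}, \ref{theorem:secondorderlimit} and \ref{theorem:vanishingthird}, and your argument is a careful unpacking of how those three results verify the two clauses of Definition \ref{definition:asymptoticrealsecond}. The details you supply --- the collapse of the factors $\phi(o^{k_i+l_{r-(i-1)}})$ and $\phi(o^{k_i-l_{r+i}})$ to Kronecker deltas and the reduction to $O$-leading monomials via traciality --- are exactly the identifications the paper leaves implicit.
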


\begin{proof}
This is a consequence of Theorems
\ref{theorem:firstorderfreeness},
\ref{theorem:secondorderlimit} and
\ref{theorem:vanishingthird}.
\end{proof}

\begin{theorem}
Let $O_1, \dots, O_s$ be independent Haar distributed
orthogonal random matrices. Then $O_1, \dots, O_s$ are
asymptotically real second order free.
\end{theorem}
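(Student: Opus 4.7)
My plan is to proceed by induction on $s$, using Theorem \ref{theorem:a_and_o} to peel off one orthogonal matrix at a time and Proposition \ref{proposition:associativity} to reassemble the resulting two-way decomposition into the desired $s$-way freeness.

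For the base case $s=2$, Theorem \ref{theorem:orthogonalsecondlimit} says that the single ensemble $\{O_1\}$ has a real second order limit distribution. Since the entries of $O_1$ and $O_2$ are independent and $O_2$ is Haar distributed, Theorem \ref{theorem:a_and_o} applied with $\{A_i\}_i = \{O_1\}$ and Haar orthogonal ensemble $O_2$ gives asymptotic real second order freeness of $O_1$ and $O_2$.

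For the inductive step, I would suppose $O_1, \dots, O_{s-1}$ are asymptotically real second order free. The key observation is that by Definition \ref{definition:asymptoticrealsecond}, this inductive assumption already packages the existence of a joint real second order limit distribution for the ensemble $\{O_1, \dots, O_{s-1}\}$, with limits $o_1, \dots, o_{s-1}$ in some real second order non-commutative probability space $(\cA, \phi, \phi_2, t)$. Since $O_s$ is independent of $\{O_1, \dots, O_{s-1}\}$, I would then apply Theorem \ref{theorem:a_and_o} once more, this time to the \emph{joint} ensemble $\{A_i\}_i = \{O_1, \dots, O_{s-1}\}$ together with the single Haar orthogonal matrix $O_s$. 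The conclusion is that the two $t$-invariant subalgebras
\[
\cA_1 = \alg(1, o_1, \dots, o_{s-1}, o_1^t, \dots, o_{s-1}^t), \qquad
\cA_2 = \alg(1, o_s, o_s^t)
\]
are real second order free in the limiting space.

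To finish, I would invoke the associativity result Proposition \ref{proposition:associativity} to refine this coarse two-block decomposition into the desired $s$-block one. Inside $\cA_1$ the subalgebras $\alg(1, o_i, o_i^t)$ for $1 \leq i \leq s-1$ are real second order free by the inductive hypothesis, and inside $\cA_2$ there is nothing further to decompose. The associativity proposition therefore yields the mutual real second order freeness of $\alg(1, o_1, o_1^t), \dots, \alg(1, o_s, o_s^t)$, which is precisely the statement that $O_1, \dots, O_s$ are asymptotically real second order free. I do not anticipate a significant obstacle: the only subtlety is recognizing that asymptotic real second order freeness, as defined in Definition \ref{definition:asymptoticrealsecond}, automatically supplies the joint real second order limit distribution needed to feed $\{O_1, \dots, O_{s-1}\}$ as a single ``$A$-ensemble'' into Theorem \ref{theorem:a_and_o}, so no separate verification of convergence of mixed moments, fluctuation moments, or higher cumulants is required.
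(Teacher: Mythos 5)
Your proof is correct and follows essentially the same route as the paper: the paper likewise applies Theorem \ref{theorem:a_and_o} to the joint ensemble $\{O_1,\dots,O_{s-1}\}$ (whose joint real second order limit distribution is supplied by the inductive hypothesis via Definition \ref{definition:asymptoticrealsecond}) together with $O_s$, and then uses Proposition \ref{proposition:associativity} to refine the two-block decomposition into the $s$-block one. The only difference is presentational: the paper writes out the case $s=3$ explicitly and says ``proceed by induction,'' whereas you spell out the general inductive step.
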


\begin{proof}
A single $O$ has a real second order limit distribution by
Theorem \ref{theorem:orthogonalsecondlimit}. By Theorem
\ref{theorem:a_and_o}, $O_1$ and $O_2$ are asymptotically
real second order free. Again by Theorem
\ref{theorem:a_and_o} $\{O_1, O_2\}$ and $O_3$ are
asymptotically real second order free. By Proposition
\ref{proposition:associativity}, $O_1$, $O_2$, and $O_3$ are
asymptotically real second order free. Then we can proceed
by induction.
\end{proof}

\begin{proposition}\label{prop:c02}
Suppose $ \{ A_i \}_i $ and $ \{ B_j \}_j $ are two
independent families of $ d \times d $ random matrices, each
having a real second order limit distribution, and suppose
that $ O $ is a $ d \times d $ Haar orthogonal matrix
independent from $ \{ A_i \}_i \cup \{ B_j \}_j $.  Then $
\{ B_j \}_l $ and $ \{ O A_i O^{-1} \}_i $ are
asymptotically real second order free.
\end{proposition}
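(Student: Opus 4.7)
The strategy is to reduce Proposition \ref{prop:c02} to an application of Theorem \ref{theorem:a_and_o} together with an orthogonal analog of the classical fact that conjugation by a free Haar unitary creates freeness.

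First, I would verify that the joint family $\{A_i\}\cup\{B_j\}$ has a real second order limit distribution. By the independence of the $A$- and $B$-entries, every mixed joint moment $\E(\tr(w(A,B)))$ and joint fluctuation moment decomposes via classical conditional expectation through the individual moment structures, and higher joint cumulants vanish similarly. Combined with the individual real second order limit distributions assumed in the hypothesis, this produces a joint limit in some real second order non-commutative probability space $(\cA,\phi,\phi_2,t)$ containing limiting elements $\{a_i\}$ and $\{b_j\}$. I would then apply Theorem \ref{theorem:a_and_o} to this combined ensemble and the independent Haar orthogonal matrix $O$, so that the subalgebra $\alg(1,a_i,b_j,a_i^t,b_j^t)$ is real second order free from $\alg(1,o,o^{-1})$ in the limit.

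Next I would verify the three conditions of Definition \ref{definition:asymptoticrealsecond} for the pair of limit subalgebras $(\alg(\{oa_io^{-1}\}),\alg(\{b_j\}))$. The identity $(oa_io^{-1})^t=oa_i^t o^{-1}$, valid because $o^t=o^{-1}$, lets one rewrite any polynomial in $\{oa_io^{-1},(oa_io^{-1})^t\}$ as $oP(a,a^t)o^{-1}$, and traciality of $\phi$ reduces centering of $oP(a)o^{-1}$ to centering of $P(a)$. A cyclically alternating centered product in the two target subalgebras therefore rewrites as an alternating product of centered $o^{\pm 1}$-factors interleaved with centered elements of $\alg(\{a_i,b_j\})$, and the first-order freeness obtained in the previous step forces $\phi=0$. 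For the second-order fluctuation moment I would apply the formula (\ref{equation:secondorderdefinition}) to the inner pair $(\alg(o),\alg(\{a_i,b_j\}))$ and sum over the resulting spoke diagrams; after collapsing the pairings of adjacent $o^{\pm 1}$ factors (which contribute either $\phi(oo^{-1})=1$ or $\phi(oo)=0$ and hence enforce the alternation), the sum reduces to the spoke-diagram formula for freeness of $\{oa_io^{-1}\}$ and $\{b_j\}$. Vanishing of higher cumulants of traces is a direct consequence of Theorem \ref{theorem:vanishingthird}.

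The main obstacle will be the combinatorial bookkeeping in the second-order step: one must check that the standard and reversed spoke diagrams arising from the inner real second order freeness between $o$ and $\{a_i,b_j\}$ translate bijectively to the standard and reversed spoke diagrams required by Definition \ref{definition:secondorderfreeness} for the pair $(\{oa_io^{-1}\},\{b_j\})$, with the correct placement of transposes in the reversed case. The intermediate claim that classical independence plus individual real second order limits yields a joint real second order limit distribution is also nontrivial and may require a separate Weingarten-style argument (or a direct computation factoring moments across the $A$- and $B$-entries).
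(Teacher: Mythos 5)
Your proof has a genuine gap at its foundation. You begin by asserting that the joint family $\{A_i\}_i\cup\{B_j\}_j$ has a real second order limit distribution, arguing that independence of the entries lets mixed joint moments ``decompose through the individual moment structures.'' This is false: a mixed trace moment such as $\E(\tr(A_1B_1))=d^{-1}\sum_{i,j}\E(a^{(1)}_{ij})\E(b^{(1)}_{ji})$ is determined by the joint behaviour of the individual entries, not by the trace-moments of words in the $A$'s alone and the $B$'s alone, and it need not converge. (Take $A_d$ and $B_d$ deterministic rank-$d/2$ projections; both individual families have real second order limit distributions and are trivially independent, but $\tr(A_dB_d)$ can be $1/2$ or $0$ depending on how the ranges are aligned.) The paper flags exactly this point at the start of its proof: ``We do not know that $\{A_i\}_i\cup\{B_j\}_j$ has a real second order limit distribution so we cannot directly apply Theorems \ref{theorem:firstorderfreeness}, \ref{theorem:secondorderlimit} and \ref{theorem:vanishingthird}.'' Since your entire reduction to Theorem \ref{theorem:a_and_o} rests on this premise, the argument does not go through as written. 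You do acknowledge in your final paragraph that this intermediate claim is ``nontrivial''; it is in fact not repairable in the generality you state it.

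The correct route, which is the one the paper takes, exploits the special alternating form of the words $OA_{i_1}O^{-1}B_{j_1}\cdots OA_{i_n}O^{-1}B_{j_n}$: because the exponents of $O$ alternate in sign, Lemma \ref{lemma:evenodd} shows that every $\pi_\peq$ arising in the Weingarten expansion is parity preserving, so each cycle of $\pi_\peq$ produces a trace $\Tr(Z_i)$ of a word in the $A$'s only or in the $B$'s only --- mixed traces never occur. Classical independence then factors $\E(\Tr(Z_1)\cdots\Tr(Z_k))$ into an $A$-part times a $B$-part, and only the individual real second order limit distributions are ever invoked. After that reduction the proofs of Theorems \ref{theorem:firstorderfreeness}, \ref{theorem:secondorderlimit} and \ref{theorem:vanishingthird} can be rerun verbatim, and the second-order bookkeeping you describe (matching spoke diagrams and the placement of transposes, with the parity constraint killing half the spoke positions) is carried out in equation (\ref{equation:paritypreserved}). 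Your later steps gesture at the right combinatorics, but without the parity-preservation observation you have no way to justify the convergence of the cumulants of the traces $\Tr(Z_i)$ that those steps require.
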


\begin{proof}
We do not know that $\{A_i\}_i \cup \{B_j\}_j$ has a real
second order limit distribution so we cannot directly apply
Theorems \ref{theorem:firstorderfreeness},
\ref{theorem:secondorderlimit} and
\ref{theorem:vanishingthird}. We shall argue that because of
the special nature of the words we are considering, i.e. $O
A_{i_1} O^{-1} B_{j_1} O A_{i_2} O^{-1} B_{j_2} \cdots O
A_{i_n} O^{-1} B_{j_n}$, the proofs can be modified so that
we only need the independence of $\{A_i\}_i$ and $\{B_j\}_j$
and the fact that the exponents of the $O$'s alternate in
sign.

Consider the expression
\[
\E(\Tr_{(\pi_\peq,\eta_\peq)}(Y_1, \dots, Y_n)) 
\] 
appearing in the statement of Proposition
\ref{proposition:exact_formula}. If we write
\[
\Tr_{(\pi_\peq,\eta_\peq)}(Y_1, \dots, Y_n) = 
\Tr(Z_1) \cdots \Tr(Z_k)
\] 
as a product along the cycles $c_1 \cdots c_k$ os
$\pi_\peq$, then the existence of a real second order limit
distribution was used to conclude that
\begin{align*}
d^{-1} \rk_1(\Tr(Z_{i})) & \mbox{\ converges},\\
\rk_2(\Tr(Z_{i}), \Tr(Z_{j})) & \mbox{\ converges}, \mbox{\ and} \\
\rk_r(\Tr(Z_{i_1}), \dots, \Tr(Z_{i_r})) &= \lo(1) 
\mbox{\ for\ } r \geq 3.
\end{align*}
This was all we needed to prove Theorems
\ref{theorem:firstorderfreeness},
\ref{theorem:secondorderlimit} and
\ref{theorem:vanishingthird}. We shall show that we still
have these three properties even though we do not assume
that $\{A_i\}_i \cup \{B_j\}_j$ has a real second order
limit distribution.

So let $n_1, n_2, \dots, n_r$ be even positive integers and
$n = n_1 + \cdots + n_r$. Let
\[
\gamma =
(1, \dots, n_1)(n_1+1, \dots, n_1 + n_2) \cdots
(n_1 + \cdots + n_{r-1}+1, \dots, n_1 + \cdots  n_r)
\]
be the permutation in $S_n$ with the cycle decomposition
given above. Let $Y_1, Y_3, \dots, Y_{n-1}$ be polynomials
in $\{A_i\}_i$ and $Y_2, Y_4, \dots , Y_n$ be polynomials in
$\{B_j\}_j$. By Proposition \ref{proposition:exact_formula}
we have
\[
\E(\Tr_\gamma(OY_1,O^{-1}Y_2, \dots, OY_{n-1},O^{-1}Y_n))
\]
\[
=
\sum_{p,q \in \cP_2(n)} \langle \Wg(p), q \rangle
\E(\Tr_{(\pi_\peq, \eta_\peq)}
(Y_1, \dots, Y_n).
\]

Now by Lemma \ref{lemma:evenodd} 
\[
\E(\Tr_{(\pi_\peq, \eta_\peq)}
(Y_1, \dots, Y_n)
=
\E(\Tr(Z_1) \cdots \Tr(Z_k))
\]
where each $Z_i$ is a polynomial in either $\{A_i\}_i$ or in
$\{B_j\}_j$. In fact we may suppose that $Z_1, \dots, Z_l$
are polynomials in $\{A_i\}_i$ and $Z_{l+1}, \dots, Z_k$ are
polynomials in $\{B_j\}_j$. Then we have
\begin{multline}\label{equation:alternatingexpansion}
\E(\Tr_\gamma(OY_1,O^{-1}Y_2, \dots, OY_{n-1},O^{-1}Y_n)) \\
=
\sum_{p,q \in \cP_2(n)} \langle \Wg(p), q \rangle
\E(\Tr(Z_1) \cdots \Tr(Z_l)) \\
\mbox{} \times \E(\Tr(Z_{l+1}) \cdots \Tr(Z_k))
\end{multline}
by the independence of the $\{A_i\}_i$ and the
$\{B_j\}_j$. This means that as far as the asymptotic
behaviour of $\E(\Tr_{(\pi_\peq, \eta_\peq)} (Y_1, \dots,
Y_n))$ is concerned we may assume that $\{A_i\}_i \cup
\{B_j\}_j$ does have a real second order limit
distribution. Now having cleared this hurdle we have by the
proof of Theorem \ref{theorem:firstorderfreeness} that
$\{OA_iO^{-1}\}_i$ and $\{B_j\}_j$ are first order
free. Likewise, the proof of Theorem
\ref{theorem:vanishingthird} can be applied to conclude that
all third and higher cumulants of traces of products of
$OA_iO^{-1}$'s and $B_j$'s are of order $\lo(1)$ as $d
\rightarrow \infty$. We shall conclude the proof by showing
that Theorem \ref{theorem:secondorderlimit} and equation
(\ref{equation:alternatingexpansion}) will give us condition
(\textit{ii}) of
Definition~\ref{definition:secondorderfreeness}.

So let us consider centred random matrices $X_1, \dots,
X_{m}$ and $Y_1, \dots Y_n$ where $X_1,\ab X_3,\ab \dots
X_{m-1}$ and $Y_1, Y_3, \dots Y_{n-1}$ are polynomials in
$\{A_i\}_i$ and $X_2, X_2, \dots, X_m$ and $Y_2, Y_4, \dots,
Y_{n}$ are polynomials in $\{B_j\}_j$. Let the second order
limit distribution of $X_1, \dots, X_m$ and $Y_1, \dots,
Y_{n}$ be given by $x_1, \dots, x_m$ and $y_1, \dots, y_{n}$
respectively.

By equation  (\ref{equation:secondorderlimit}) we have for $m=n$
\[
\lim_{d \rightarrow \infty}
\cov(\Tr(OX_1O^{-1} X_2 \cdots O^{-1}X_m), \Tr(OY_{1} \cdots O^{-1}Y_{n}))
\]
\[=
\sum_{r=1}^{m}\left\{ \prod_{i=1}^m \phi(x_i y_{r-i})
\delta_{\epsilon_i, -\epsilon_{\gamma^{-i+1}(m+r)}}
+ \prod_{i=1}^m \phi(x_i y_{r+i}^t)
\delta_{\epsilon_i, \epsilon_{\gamma^{i}(m+r)}}\right\}
\]
Since $\epsilon_i = (-1)^i$, we have both
$\delta_{\epsilon_i, -\epsilon_{\gamma^{-i+1}(m+r)}} = 1$
and $\delta_{\epsilon_i, \epsilon_{\gamma^{i}(m+r)}} = 1$
only when $r$ is even; see Figure
\ref{figure:paritypreserved}. Thus
\begin{multline}\label{equation:paritypreserved}
\sum_{r=1}^{m}\left\{ \prod_{i=1}^m \phi(x_i y_{r-i})
\delta_{\epsilon_i, -\epsilon_{\gamma^{-i+1}(m+r)}}
+ \prod_{i=1}^m \phi(x_i y_{r+i}^t)
\delta_{\epsilon_i, \epsilon_{\gamma^{i}(m+r)}}\right\} \\
=
\sum_{r=1}^{m/2}\left\{ \prod_{i=1}^m \phi(x_i y_{2r-i})
+ \prod_{i=1}^m \phi(x_i y_{2r+i}^t)\right\}
\end{multline}
For $i$ odd we write $\phi(x_iy_{2r-i}) = \phi(ox_io^{-1}
oy_{2r-i}o^{-1})$. Then
\begin{multline*}
\sum_{r=1}^{m/2}\left\{ \prod_{i=1}^m \phi(x_i y_{2r-i})
+ \prod_{i=1}^m \phi(x_i y_{2r+i}^t)\right\} \\
=
\sum_{r=1}^{m/2}\left\{ \prod_{i=1}^{m/2} 
\phi\big((o x_{2i-1}o^{-1})(o y_{2r-(2i-1)}o^{-1})\big)
\phi(x_{2i} y_{2r-2i}) \right.\\
\left.\mbox{}+ 
\prod_{i=1}^{m/2} \phi\big((ox_{2i-1} o^{-1})(oy_{2r+2i-1}o)^t\big)
\phi(x_{2i} y_{2r + 2i}^t)\right\}
\end{multline*}
This shows that condition (\textit{ii}) of
Definition~\ref{definition:secondorderfreeness} is
satisfied.
\begin{figure}
\hfill\includegraphics{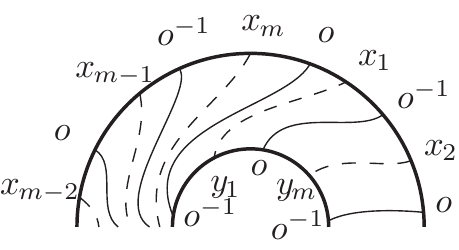}\hfill\includegraphics{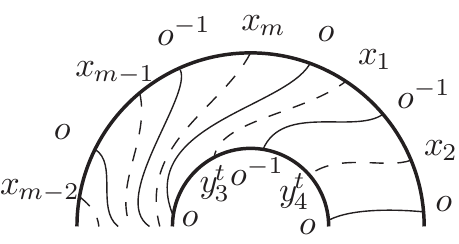}\hfill\hbox{}
\caption{\label{figure:paritypreserved} When $\epsilon_i =
  (-1)^i$ the only spoke diagrams that make a contribution
  are those where we connect an $o$ to an $o^{-1}$. This
  means we can only connect an $a$ to a $b$ if the indices
  have the same parity. This is what we see in equation
  (\ref{equation:paritypreserved}).}
\end{figure}
\end{proof}

\begin{definition}\label{definition:orthogonallyinvariant}
A random matrix is said to be invariant under conjugation by
an orthogonal matrix if the joint distribution of the
entries is invariant under conjugation by an orthogonal
matrix. So if we let $A$ be a random matrix, $O$ be a
orthogonal matrix and $B = OAO^{-1}$ then we mean that for
every $i_1, \dots, i_n, i_{-1}, \dots, i_{-n}$ we have
\[
\E(a_{i_1i_{-1}}\cdots a_{i_ni_{-n}}) =
\E(b_{i_1i_{-1}}\cdots b_{i_ni_{-n}}).
\]
\end{definition}

Many standard examples of random matrices are invariant
under conjugation by a unitary or orthogonal matrix. In
particular, real Wishart matrices, the Gaussian orthogonal
ensemble, Ginibre matrices, and orthogonal matrices are all
invariant under conjugation by an orthogonal matrix. In
\cite{r1,r2}, Redelmeier these were shown to have real second
order limit distributions and so satisfy the hypothesis of
our  theorem below.

\begin{theorem}\label{theorem:mastertheorem}
Suppose that $ \{ A_i \}_i $ and $ \{ B_j \}_j $ are two
independent families of random matrices, each with real
second order limit distribution. Suppose also that the
family $ \{A_i \}_i $ is invariant under conjugation by an
orthogonal matrix.  Then $ \{ A_i \}_i $ and $ \{ B_j \}_j $
are asymptotically real second order free.
\end{theorem}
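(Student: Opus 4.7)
The plan is to reduce Theorem \ref{theorem:mastertheorem} to Proposition \ref{prop:c02} by exploiting the orthogonal invariance of $\{A_i\}_i$ to introduce a ``free'' Haar orthogonal matrix into the picture without changing any joint distributions.

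First I would enlarge the probability space so that it carries a $d \times d$ Haar distributed orthogonal random matrix $O$ that is independent of both $\{A_i\}_i$ and $\{B_j\}_j$. Set $\widetilde{A}_i := O A_i O^{-1}$. Using Definition \ref{definition:orthogonallyinvariant} together with the independence of $O$ from $\{A_i\}_i$ (condition on $O$ and apply orthogonal invariance of the joint distribution of the $A_i$'s), the family $\{\widetilde{A}_i\}_i$ has exactly the same joint distribution as $\{A_i\}_i$. Moreover, since $O$ is independent of $\{B_j\}_j$ and $\{A_i\}_i$ is independent of $\{B_j\}_j$, the pair $\{\widetilde{A}_i\}_i$ and $\{B_j\}_j$ is still composed of two independent families. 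Consequently, for every polynomial $p$ in the relevant non-commuting variables (with transposes), the traces $\Tr(p(A_1,\ldots,A_r,B_1,\ldots,B_s))$ and $\Tr(p(\widetilde A_1,\ldots,\widetilde A_r,B_1,\ldots,B_s))$ have identical joint distributions, and in particular identical first moments, covariances, and higher cumulants.

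Next I would invoke Proposition \ref{prop:c02}. Its hypotheses are exactly met: $\{A_i\}_i$ and $\{B_j\}_j$ are independent families each with a real second order limit distribution, and $O$ is an independent Haar orthogonal matrix. The conclusion of that proposition is that $\{\widetilde{A}_i\}_i = \{OA_iO^{-1}\}_i$ and $\{B_j\}_j$ are asymptotically real second order free. Transporting this back along the equality of joint distributions established in the previous paragraph gives that $\{A_i\}_i$ and $\{B_j\}_j$ are asymptotically real second order free, which is the desired conclusion.

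The only place that needs genuine care is checking that the orthogonal invariance, which by Definition \ref{definition:orthogonallyinvariant} is phrased only for the joint distribution of the entries of a single matrix $A$ of the family, implies the matching of joint distributions needed above, namely the equality of all mixed moments $\E(\Tr(p_1(\widetilde A,B))\cdots \Tr(p_k(\widetilde A,B))) = \E(\Tr(p_1(A,B))\cdots \Tr(p_k(A,B)))$. This is routine: one conditions on $\{B_j\}_j$ (using its independence from $O$ and from $\{A_i\}_i$), then conditions on $O$, and finally applies the invariance hypothesis entrywise to the $A_i$'s. Beyond this verification, the argument is essentially a one-line application of Proposition \ref{prop:c02}; no new Weingarten estimates or cumulant bounds are needed, as the hard analytic work has already been carried out in the preceding sections.
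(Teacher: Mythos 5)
Your proposal is correct and follows exactly the route the paper takes: introduce an independent Haar orthogonal matrix $O$, use orthogonal invariance to replace $\{A_i\}_i$ by $\{OA_iO^{-1}\}_i$ without changing any joint trace distributions, and then apply Proposition~\ref{prop:c02}. The paper's own proof is precisely this one-line reduction; your additional remarks on conditioning to verify the matching of mixed moments are a reasonable elaboration of the same argument.
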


\begin{proof}
Since the joint distribution of the entries of $A_i$ and
$OA_iO^{-1}$ are the same we may replace $\{A_i\}_i$ by
$\{OA_iO^{-1}\}_i$ and then apply
Proposition~\ref{prop:c02}.
\end{proof}


\section{Concluding Remark}\label{section:finalremarks}

Let us consider $ \{ A_i \}_i $ and $ \{ B_l \}_l $ two
independent ensembles of random matrices, each with a real
second order limit distribution and suppose that the
ensemble $ \{ A_i \}_i $ is invariant under a conjugation by
a unitary matrix.  In \cite{mss} it is shown that $ \{ A_i
\}_i $ and $ \{ B_l \}_l $ are asymptotically \emph{complex}
second order free (see \cite{mss}, Corollary 3.16). Since
orthogonal matrices are also unitary, Theorem
\ref{theorem:mastertheorem} implies that $ \{ A_i \}_i $ and
$ \{ B_l \}_l $ are asymptotically \emph{both} real and
complex second order free. In particular, the second term on
the right-hand side of equation
(\ref{equation:secondorderdefinition}) must vanish. In
consequence, for $ A_1, A_2 \in \{ A_i \}_i $ we have that
\[ \lim_{ d \rightarrow \infty } \text{tr}( A_1 A_2^t) = 0.\]
The connection between ensembles of random matrices which
are invariant under a conjugation with a unitary and real
second order freeness goes deeper than this and is
investigated in the subsequent paper \cite{mp2} in which we show that unitarily invariant ensembles are asymptotically free from their transposes.
 

\thebottomline
\end{document}